\newcommand{\LL}{\mathbb{L}}
\newcommand{\GG}{\mathbb{G}}
\newcommand{\EE}{\mathcal{E}}
\newcommand{\keywords}[1]{\textbf{{Key Words:}} #1}
\numberwithin{equation}{section}
\newtheorem{theorem}{Theorem}[section]
\newtheorem{lemma}[theorem]{Lemma}
\theoremstyle{definition}  
\newtheorem{definition} [theorem] {Definition}
 \newcommand {\reg}{{\overrightarrow{1}_{\infty}}}
\begin{document}

\title{Laplace-eigenvalue equations for length three modular iterated integrals}
\date{}
\author
{Joshua Drewitt \\ \small University of Nottingham, University Park,
Nottingham,
United Kingdom}

\maketitle

\begin{abstract}
    A space of modular iterated integrals sits inside the space of real analytic modular forms. We present a theorem for producing length three modular iterated integrals which are not simply combinations of real analytic Eisenstein series; each function has an associated Laplace-eigenvalue equation. This can be viewed as an extension of the length two case recently given by F. Brown, a review of which is included in this paper. We discuss how modular iterated integrals could help understand the modular graph functions which arise in string perturbation theory.
\end{abstract}

\keywords{Real analytic modular forms, non-holomorphic modular forms, modular iterated integrals, iterated Eisenstein integrals, modular graph functions.}

\section{Introduction}
\subsection{Overview of Modular Iterated Integrals}

The space of real analytic modular forms was recently introduced by Brown \cite{brown1,brown2,brown3} and contains or intersects various previously studied classes of important modular objects, including classical modular forms, weakly anti-holomorphic forms and Maass wave forms. A newly identified space of modular iterated integrals, denoted by $\mathcal{MI}$, also sits naturally inside this class of forms. 

This space is extremely important for many reasons. It is closely related to the theory of single polylogarithms and mixed motives. Furthermore, in \cite{nikome} it is shown that we can associate period polynomials to certain modular iterated integrals; this is analogous to the period polynomials of classical modular forms. 

We will focus, however, on its connection to the modular graph functions which arise in string perturbation theory. These functions share many of the differential and algebraic properties of modular iterated integrals and, indeed, there is good evidence that all modular graph functions are contained within the space $\mathcal{MI}$ (this is discussed in more detail in \cite{brown1,brown2}). Therefore, studying the space of modular iterated integrals will help with the long standing problem of giving a complete description of the modular graph functions. This problem has been written about extensively in physics literature, see any of \cite{Proof123loop,modstructure,Idents,Hier,holosub,toclosed,gener,allorder,ellipzeta,heterotic}
 for example. 

The space $\mathcal{MI}$ is given an increasing length filtration 
\begin{equation*}
\mathcal{MI}_n \subset \mathcal{MI}, \qquad n \geq 0,
\end{equation*}
with an exact definition given in Section \ref{mii}.
The modular iterated integrals of length zero and one are well understood. We are able to give a concise description of both spaces:
\begin{equation*}
        \mathcal{MI}_0= \mathbb{C}[\mathbb{L}^{-1}] \hspace{14mm} \text{and} \hspace{14mm} \mathcal{MI}_1 = \, \mathcal{MI}_0 \otimes_{\mathbb{C}} \! \! \!    \bigoplus_{r,s \geq 0,\, r+s \geq 2}  \! \! \! \mathbb{C} \,\mathcal{E}_{r,s},
\end{equation*}
where $\mathbb{L}= i \pi (z-\bar{z})$, $z$ is an element of the upper half plane and $\mathcal{E}_{r,s}$ denotes the real analytic Eisenstein series of weights $(r,s)$ (Definition \ref{ers}).
 Therefore, we know of all the possible functions that can exist in $\mathcal{MI}_0$ or $\mathcal{MI}_1$. The same cannot be said, however, for higher lengths.
 
Brown \cite{brown1}, nonetheless, has found numerous functions that exist in $\mathcal{MI}_2$, explicit examples include 
\begin{equation*}
    \mathcal{E}_{0,2}\mathcal{E}_{1,1}, \hspace{9mm} \mathcal{E}_{2,0}\mathcal{E}_{0,2}+ \frac{1}{2}\mathcal{E}_{1,1}\mathcal{E}_{1,1} \hspace{9mm }\text{and} \hspace{9mm} \frac{1}{2}\mathcal{E}_{2,0}\mathcal{E}_{2,0},
\end{equation*}
however these are just linear combinations of real analytic Eisenstein series and we do not consider them ``new" functions. By ``new" functions we will therefore mean  functions that are not simply combinations of $\mathcal{E}_{r,s}$ or $\mathbb{L}$.

A theorem for producing such functions was discovered by Brown in Section 9 of \cite{brown1}. Each of these functions has an associated Laplace-eigenvalue equation, for example, the functions $F^{(1)}_{2,0}$, $F^{(1)}_{1,1}$ and $F^{(1)}_{0,2}$ satisfy
\begin{align}
    (\Delta+2)F^{(1)}_{2,0} &= -4\mathbb{L}^2\mathbb{G}_4\mathcal{E}_{0,2}, \label{intro1a} \\
    (\Delta+2)F^{(1)}_{1,1} &= -4 \mathbb{L}^3 \mathbb{G}_4  \overline{\mathbb{G}}_4, \label{intro1b} \\
    (\Delta+2)F^{(1)}_{0,2} &= -4\mathbb{L}^2\overline{\mathbb{G}}_4\mathcal{E}_{2,0},  \label{intro1c}
\end{align}
where $\mathbb{G}_4$ is an Eisenstein series \eqref{Eisen} and $\Delta$ is the Laplacian \eqref{lap}. We will see the importance of these Laplace equations for the ``new" functions in relation to modular graph functions shortly. 

\subsection{Modular Graph Functions}\label{sectmgf}

Modular graph functions arise in the low-energy expansion of Type II superstring amplitudes at genus one by assigning a lattice sum to a graph. Simply put, they are a type of real analytic function on the upper half plane. These functions have an extremely interesting mathematical structure, provide connections with number theory and particle phenomenology and are of great importance in other areas of physics. They have been discussed in detail in physics literature and we refer the reader to any of \cite{ModGraphFunc,Proof123loop,Lowengexp,singleval,modstructure} for more information on their origin.

Throughout this paper, we will use $z$ to denote an element of the upper half plane. The type of modular graph functions of interest to us are defined by the multiple sum
\begin{equation*}
    C_{a_1,\dots,a_p}(z) \! = \sum_{(m_i,n_i)  \neq (0,0)} \!\! \delta(m)\delta(n) \prod^p_{i=1}\left( \dfrac{Im(z)}{\pi|m_iz+n_i|^2}\right)^{a_i},
\end{equation*}
where $i=1,2,\dots,p$, each $m_i, n_i \in \mathbb{Z}$, $m=m_1+m_2+\cdots +m_p$, $n=n_1+n_2+\cdots+ n_p$, $\delta(x)=1$ if and only if $x=0$ and each $a_i$ is an integer greater than zero. Such a function is said to have $p-1$ loops and a total weight of $a_1+\cdots+a_p$. 
The space of one-loop modular graph functions is well understood. For each total weight there is a unique modular graph function that can be represented in terms of a real analytic Eisenstein series and $\mathbb{L}$:
\begin{equation*}
    C_{a_1,a_2} = \dfrac{2^{a+1}}{(2a-2)!}\mathbb{L}^{a-1}\mathcal{E}_{a-1,a-1},
\end{equation*}
where $a=a_1+a_2$. 
These one-loop modular graph functions obey the following Laplace-eigenvalue equation: 
\begin{equation*}
    \Delta C_{a_1,a_2} = a(a-1)C_{a_1,a_2}.
\end{equation*}
For example, at weight 4 we have
\begin{equation*}
   C_{2,2} = C_{1,3}= C_{3,1}  := \mathbb{C}_4  = \dfrac{2}{45} \mathbb{L}^3\mathcal{E}_{3,3} \qquad \
\text{and} \qquad \
     \Delta \mathbb{C}_4   = 12 \mathbb{C}_4. 
\end{equation*}

The space of two-loop modular graph functions is also well understood. The authors of \cite{modstructure}, building upon the work of \cite{Lowengexp} and \cite{theloweng}, discovered that these functions obey systems of inhomogeneous Laplace-eigenvalue equations. The inhomogeneous parts of these equations are made up of linear and quadratic combinations of real analytic Eisenstein series (and $\mathbb{L}$). Some examples from \cite{modstructure} include 
\begin{align}
    \Delta C_{2,2,1} &= \frac{4}{315}\mathbb{L}^4\mathcal{E}_{4,4}, \label{intro2a} \\
    (\Delta+2)C_{2,1,1} &= 16\mathbb{L}^2\mathcal{E}_{1,1}\mathcal{E}_{1,1}- \dfrac{2}{5}\mathbb{L}^3\mathcal{E}_{3,3},  \\
    (\Delta+6)C_{3,1,1} &= \frac{32}{3}\mathbb{L}^3\mathcal{E}_{1,1}\mathcal{E}_{2,2} - \frac{8}{315}\mathbb{L}^4\mathcal{E}_{4,4} -3C_{2,2,1} . \label{intro2b}
\end{align}
This first equation was solved easily in \cite{modstructure}, giving an explicit expression for $C_{2,2,1}$ in terms of a real analytic Eisenstein series (and a constant): 
\begin{equation}\label{c221}
   C_{2,2,1} = \frac{1}{1575}\mathbb{L}^4\mathcal{E}_{4,4}+ \frac{\zeta (5)}{30}.   
\end{equation}

In \cite{ellipzeta}, it was shown that $C_{2,1,1}$ and $C_{3,1,1}$ (or closely related objects) can be expressed in terms of convergent versions of iterated Eisenstein series. However, in this paper, we will mainly be interested in representations involving ``new'' functions, such as $F^{(1)}_{2,0}$, $F^{(1)}_{1,1}$ and $F^{(1)}_{0,2}$.
In \cite{brown1}, it was shown that we should be able to give an explicit expression for $C_{2,1,1}$ in terms of $\mathbb{L}F^{(1)}_{1,1}, \mathbb{L}^2\mathcal{E}_{2,0}\mathcal{E}_{0,2}$, $\mathbb{L}^3\mathcal{E}_{3,3}$ and a constant, since
\begin{equation}\label{match}
    (\Delta+2)\Big(4\mathbb{L}F^{(1)}_{1,1} -16\mathbb{L}^2\mathcal{E}_{2,0}\mathcal{E}_{0,2}+\dfrac{1}{25}\mathbb{L}^3\mathcal{E}_{3,3}\Big)= (\Delta+2)C_{2,1,1}.
\end{equation} 
This is discussed in more detail in Section \ref{trip}.
If other modular graph functions, such as $C_{3,1,1}$, can be expressed using ``new'' modular iterated integrals, then these integrals can help with the previously mentioned problem of giving a complete description of the modular graph functions. 

Similar to increasing the length of modular iterated integrals, as we increase the number of loops for the modular graph functions, the space of such functions becomes more obscure. Modular graph functions of three-loops (and higher) are no longer guaranteed to satisfy the linear or quadratic Laplace equations from the one- and two-loop cases. A few relations for low weight three- and four-loop functions have been conjectured and the simplest of these was recently proven in \cite{Proof123loop}.
See also \cite{Cabcd}, where the author obtains a Laplace-eigenvalue equation satisfied by three-loop modular graph functions $C_{a,b,c,d}$, for general $a,b,c$ and $d$.
However, a better understanding of these spaces is needed. 

\subsection{Overview of Results}

Whilst there is an extensive amount of literature in physics focusing on modular graph functions, the number theoretic literature on modular iterated integrals is limited. With this in mind, we aim to produce analogous examples of Brown's Laplace-eigenvalue equations but for ``new" functions in $\mathcal{MI}_3$ instead of $\mathcal{MI}_2$.  We will provide a simplified explanation of how this was achieved, a full explanation is given in Section \ref{length3}.

We let $a,b \geq 1$ and $2a+2b \leq 8$. By Theorem 9.3 of \cite{brown1}, we know there exists a family of elements $(F^{(0)}_{2a+2,2b+2})_{r,s} \in \mathcal{MI}_2$ of weights $(r,s)$, with $r,s \geq 0$ and $r+s=2a+2b$, which satisfy the following:
\begin{align*}
   &   \partial (F^{(0)}_{2a+2,2b+2})_{2a+2b,0} =  \mathbb{L}\mathbb{G}_{2a+2}\mathcal{E}_{2b,0} ,\\
& \partial (F^{(0)}_{2a+2,2b+2})_{r,s} - (r+1)(F^{(0)}_{2a+2,2b+2})_{r+1,s-1} = \mathbb{L}\mathbb{G}_{2a+2}\mathcal{E}_{2b-s,s}   ,  \tag*{if $ s \geq 1$.} \nonumber
\end{align*}
Here, to emphasise the importance of the values of $a$ and $b$, we use the convention $(F^{(0)}_{2a+2,2b+2})_{r,s}$ rather than the $F^{(0)}_{r,s}$ used by Brown. Using the above equations, we can construct the modular equivariant function
\begin{equation*}
    F^{(0)}_{2a+2,2b+2}(z) = \sum_{\mathclap{\substack{r+s= \\
    2a+2b}}} \, \big(F^{(0)}_{2a+2,2b+2}(z)\big)_{r,s}(X-zY)^r(X-\bar{z}Y)^s
\end{equation*}
which satisfies the differential equation
\begin{equation}\label{intro1}
   \dfrac{\partial }{\partial z} F^{(0)}_{2a+2,2b+2}(z)= \dfrac{1}{2} E_{2a+2}(z) \mathcal{E}_{2b}(z),
\end{equation}
where
\begin{align*}
    & {E}_{2a+2}(z) = 2 \pi i \, \mathbb{G}_{2a+2}(z)(X-z Y)^{2a},
     \\
    & \mathcal{E}_{2b} (z) = \sum_{r+s=2b} \mathcal{E}_{r,s}(X-z Y)^r(X-\bar{z}Y)^s.\label{e2a}
\end{align*}
Similarly, we can also show that the equation
\begin{equation}\label{intro2}
    \dfrac{\partial }{\partial \bar{z}} F^{(0)}_{2a+2,2b+2}(z) = \dfrac{1}{2}\mathcal{E}_{2a}(z)  \overline{{E}}_{2b+2}(z)
\end{equation}
holds. Now, letting $c\geq1$ and setting $2a+2b+2c \leq 8$, we then define the forms
\begin{equation*}
     D_{2a+2,2b+2,2c+2}(z):= E_{2a+2}(z) \cdot 
    F^{(0)}_{2b+2,2c+2}(z) \ dz + F^{(0)}_{2a+2,2b+2}(z) \cdot \overline{{E}}_{2c+2}(z) 
    \  d\bar{z}
\end{equation*}
and, using equations \eqref{intro1} and \eqref{intro2}, we can show this is a closed form. This leads to the definition of the function
\begin{equation*}
    K_{2a+2,2b+2,2c+2}(z) = -\dfrac{1}{2} \int_{z}^{\reg} D_{2a+2,2b+2,2c+2}.
\end{equation*}
Here the integral is regularised as in Section 8 of \cite{brown1} (see also, Section 4 of \cite{BrownMultiple}) and $\reg$ denotes the tangential base point at the cusp. An explanation of this regularisation is given in Section \ref{doub}. This function obeys the differential equation
\begin{align*}
     \dfrac{\partial}{\partial z} K_{2a+2,2b+2,2c+2}&(z) = \pi i  \mathbb{G}_{2a+2}(z)(X-zY)^{2a} \cdot F^{(0)}_{2b+2,2c+2}(z) 
     \\[1mm]
     =  \pi i &\mathbb{G}_{2a+2}(z)  \, \sum_{\mathclap{\substack{r+s= \\
    2a+2b+2c}}} \ \big(F^{(0)}_{2b+2,2c+2}(z)\big)_{r-2a,s}(X-zY)^r(X-\bar{z}Y)^s.
\end{align*}

This, in turn, leads to a simplified version of our main theorem of this paper (the full version of this theorem is Theorem \ref{3MI3THM}).
\begin{theorem}
Let $a,b,c \geq 1$, $2a+2b+2c\leq8$ and set $w=a+b+c$. There exists a family of elements $(G_{2a+2,2b+2,2c+2})_{r,s} \in \mathcal{MI}_3$ of weights $(r,s)$, with $r,s \geq 0$ and $r+s=2w$, which satisfy the following:
\begin{align*}
      & \partial (G_{2a+2,2b+2,2c+2})_{2w,0} =  \mathbb{L}\mathbb{G}_{2a+2} \big(F^{(0)}_{2b+2,2c+2}\big)_{2b+2c,0},\\
     & \partial (G_{2a+2,2b+2,2c+2})_{r,s} - (r+1)(G_{2a+2,2b+2,2c+2})_{r+1,s-1} \\ & \hspace{60mm}= \mathbb{L}\mathbb{G}_{2a+2} \big(F^{(0)}_{2b+2,2c+2}\big)_{r-2a,s}   ,  \tag*{if $s \geq 1$.}
\end{align*}
\end{theorem}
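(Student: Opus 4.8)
The plan is to define the required family as the coefficients of the equivariant function $K_{2a+2,2b+2,2c+2}$ already constructed above, and then to translate the first-order equation for $\frac{\partial}{\partial z}K_{2a+2,2b+2,2c+2}$ into the stated $\partial$-recursion. This mirrors exactly the way the component equations defining $F^{(0)}_{2b+2,2c+2}$ are read off from equation \eqref{intro1}, so the argument is the length-three analogue of that length-two derivation. Concretely, expanding
\[
K_{2a+2,2b+2,2c+2}(z)=\sum_{r+s=2w}\big(K_{2a+2,2b+2,2c+2}\big)_{r,s}(z)\,(X-zY)^r(X-\bar{z}Y)^s,
\]
I set $(G_{2a+2,2b+2,2c+2})_{r,s}:=(K_{2a+2,2b+2,2c+2})_{r,s}$. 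Since $K_{2a+2,2b+2,2c+2}$ is modular equivariant of weight $2w$ (being the regularised integral of the closed equivariant form $D_{2a+2,2b+2,2c+2}$), each coefficient is automatically a real analytic modular form of weight $(r,s)$, and it remains to verify the two displayed identities and membership in $\mathcal{MI}_3$.

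First I would establish the dictionary between $\frac{\partial}{\partial z}$ acting on an equivariant generating series and the operator $\partial$ acting on its coefficients. Differentiating the expansion term by term yields the derivatives of the coefficients together with cross terms carrying a factor $Y$; rewriting $Y=\big((X-\bar{z}Y)-(X-zY)\big)/(z-\bar{z})$ re-expands these in the basis $(X-zY)^r(X-\bar{z}Y)^s$. Collecting the coefficient of $(X-zY)^r(X-\bar{z}Y)^s$ and multiplying by $(z-\bar{z})$ gives
\[
\partial\big(K_{2a+2,2b+2,2c+2}\big)_{r,s}-(r+1)\big(K_{2a+2,2b+2,2c+2}\big)_{r+1,s-1},
\]
where $\partial$ is the weight-raising operator acting on a weight $(r,s)$ form by $(z-\bar{z})\frac{\partial}{\partial z}+r$ and landing in weight $(r+1,s-1)$. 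Thus the $(X-zY)^r(X-\bar{z}Y)^s$-coefficient of $(z-\bar{z})\frac{\partial}{\partial z}K_{2a+2,2b+2,2c+2}$ is exactly this expression.

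With the dictionary in place I would substitute the formula for $\frac{\partial}{\partial z}K_{2a+2,2b+2,2c+2}$ recorded above. Multiplying it by $(z-\bar{z})$ converts the scalar prefactor $\pi i$ into $\mathbb{L}=i\pi(z-\bar{z})$ and gives
\[
(z-\bar{z})\frac{\partial}{\partial z}K_{2a+2,2b+2,2c+2}=\mathbb{L}\,\mathbb{G}_{2a+2}\sum_{r+s=2w}\big(F^{(0)}_{2b+2,2c+2}\big)_{r-2a,s}(X-zY)^r(X-\bar{z}Y)^s.
\]
Matching coefficients then yields, for all $r+s=2w$,
\[
\partial\big(G_{2a+2,2b+2,2c+2}\big)_{r,s}-(r+1)\big(G_{2a+2,2b+2,2c+2}\big)_{r+1,s-1}=\mathbb{L}\,\mathbb{G}_{2a+2}\big(F^{(0)}_{2b+2,2c+2}\big)_{r-2a,s}.
\]
When $s=0$ the term $(G_{2a+2,2b+2,2c+2})_{r+1,-1}$ is absent, which isolates the top identity (using $2w-2a=2b+2c$), while the cases $s\ge 1$ give the stated recursion.

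Finally, for the membership claim I would argue that $K_{2a+2,2b+2,2c+2}$, obtained from $F^{(0)}_{2b+2,2c+2}\in\mathcal{MI}_2$ by a single integration of the one-form $D_{2a+2,2b+2,2c+2}$, raises the length filtration by one: both $\frac{\partial}{\partial z}K_{2a+2,2b+2,2c+2}$ and, by the symmetric $d\bar{z}$-component of $D_{2a+2,2b+2,2c+2}$, its $\bar{z}$-derivative are of the form (holomorphic or antiholomorphic Eisenstein series)$\,\times\,$($\mathcal{MI}_2$-element), which is the defining property placing $K_{2a+2,2b+2,2c+2}$, and hence each $(G_{2a+2,2b+2,2c+2})_{r,s}$, in $\mathcal{MI}_3$. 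The coefficient bookkeeping in the dictionary is routine; the step requiring the most care is the well-definedness and equivariance of $K_{2a+2,2b+2,2c+2}$ through the regularisation at $\reg$, since it is this that simultaneously underwrites the weight $(r,s)$ statement and the placement in $\mathcal{MI}_3$.
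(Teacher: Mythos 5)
There is a genuine gap at the very first step: you set $(G_{2a+2,2b+2,2c+2})_{r,s}:=(K_{2a+2,2b+2,2c+2})_{r,s}$ on the grounds that $K_{2a+2,2b+2,2c+2}$ is modular equivariant ``being the regularised integral of the closed equivariant form $D_{2a+2,2b+2,2c+2}$.'' That is false. Integrating an equivariant closed form along a path to the tangential base point $\reg$ produces a function that is equivariant only up to a period cocycle: one has $K_{2a+2,2b+2,2c+2}(\gamma z)|_{\gamma}-K_{2a+2,2b+2,2c+2}(z)=C_{\gamma}$ with $\gamma\mapsto C_{\gamma}\in Z^1(\Gamma_1,V_{2w}\otimes\mathbb{C})$ generally nonzero, exactly as in the length-two case (where the paper stresses that $K^{(k)}_{2a+2,2b+2}$ is ``not quite modular equivariant''). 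The paper's proof therefore invokes the Eichler--Shimura theorem to write $C_{\gamma}$ as a coboundary plus cocycles of modular forms, and replaces $K$ by $M_{2a+2,2b+2,2c+2}=K_{2a+2,2b+2,2c+2}-c-\tfrac{1}{2}\int_z^{\reg}(\underline{h}+\overline{\underline{k}})$, further adjusted by a multiple of $\mathcal{E}_{2w}$ so that $h$ is a cusp form. Only after this correction are the coefficients genuine elements of $\mathcal{M}_{r,s}$, which is a prerequisite both for the weight $(r,s)$ claim and for applying the maximality criterion (Lemma \ref{thmFrs}) to place them in $\mathcal{MI}_3$. Without it, your $(K)_{r,s}$ need not be modular at all, and the conclusion fails.

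The rest of your argument is essentially sound and matches the paper's route: the ``dictionary'' you describe is precisely Lemma \ref{dF=A}, and in the restricted range $2a+2b+2c\leq 8$ the cusp forms $f^{(0)}_{2a+2,2b+2}$ and $h$ vanish (no cusp forms of weight $\leq 10$), so the displayed $\partial$-system is exactly as you derive it. Note, though, that even in this range the antiholomorphic correction $\bar{k}$ can be a nonzero multiple of $\overline{\mathbb{G}}_{2w+2}$ and the coboundary $c$ need not vanish; these do not disturb the holomorphic $\partial$-equations (since $\frac{\partial}{\partial z}\int_z^{\reg}\pi i\,\bar{k}(\tau)(X-\bar{\tau}Y)^{2w}\,d\bar{\tau}=0$), but they are indispensable for equivariance, so the correction step cannot be skipped even here.
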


This theorem can produce the Laplace-eigenvalue equations for ``new'' length three modular iterated integrals we are looking for. Examples include
\begin{align}
    (\Delta+8)(G_{6,4,4})_{6,2}&=-\mathbb{L}^2\mathbb{G}_6\overline{\mathbb{G}}_4\mathcal{E}_{2,0} -3\mathbb{L}\mathbb{G}_6\mathcal{E}_{1,1}\mathcal{E}_{0,2}, \label{intro3a}\\
    (\Delta+8)(G_{6,4,4})_{7,1}&=-\mathbb{L}\mathbb{G}_6\mathcal{E}_{1,1}\mathcal{E}_{1,1} -2\mathbb{L}\mathbb{G}_6\mathcal{E}_{2,0}\mathcal{E}_{0,2}. \label{intro3c}
\end{align}

\subsection{Further Research}\label{research}

We can use Theorem \ref{3MI3THM} with Theorem 9.3 of \cite{brown1} to create sets of Laplace-eigenvalue equations in the length two and three cases. We can compare this to the data on modular graph functions from literature in physics to repeat the result of \eqref{match} but with different modular graph functions. This is a relatively new idea and so, as far as the author is aware, it has not yet been achieved on a larger scale than the one equation given by \eqref{match} and the examples we give in Section \ref{examples}. 

We recall that the two-loop modular graph functions $C_{a,b,c}$ satisfy systems of Laplace-eigenvalue equations whose inhomogeneous parts are linear/quadratic in real analytic Eisenstein series, as in equations \eqref{intro2a} -- \eqref{intro2b}. Similarly, the Laplace equations for length two modular iterated integrals can be seen as quadratic in some form of Eisenstein series ($\mathcal{E}_{r,s}$ or $\mathbb{G}_n$), as in \eqref{intro1a} -- \eqref{intro1c}. However, the Laplace equations in the length three case are cubic in some form of Eisenstein series, as in \eqref{intro3a} and \eqref{intro3c}. 

Therefore, it would not be surprising if the length three Laplace equations were not used in expressing two-loop modular graph functions. Instead, it is more likely that, if they were to be used,  it would be with expressing the much less understood three-loop (or higher) modular graph functions. See the end of Section \ref{endex3}, where we discuss this idea in more detail. It will also be interesting to see if any modular iterated integrals do not arise in the expression of any modular graph functions, or vice versa.

Of course, producing ``new'' length three modular iterated integrals will provide crucial insight into the space $\mathcal{MI}_3$, and $\mathcal{MI}$ in general, which is important by itself. Furthermore, this paper  provides a stepping stone to investigating ``new'' functions and Laplace-eigenvalue equations for higher length modular iterated integrals.

\vspace{1mm}

\textbf{Acknowledgements.} I thank N. Diamantis for his valuable support, feedback and encouragement throughout this writing process and F. Brown for his many helpful comments on modular iterated integrals. 
I am deeply grateful to O. Schlotterer for the many, many useful discussions related to this paper. I also thank A. Kleinschmidt, N. Matthes, M. Doroudiani, M. Hidding and B. Verbeek.
I am very thankful to the referees of this paper, who have greatly improved its content with their helpful suggestions and comments.
This work was supported by the Engineering and Physical Sciences Research Council (Grant No. EP/N50970X/1).

\section{Notation and Definitions}

We begin by setting some basic notation and recalling the definition of a real analytic modular form.

We let $z$ denote an element of the upper half plane $\mathcal{H}$ and set $\Gamma_1=SL(2,\mathbb{Z})$. For integers $r,s$, a function $f:\mathcal{H} \to \mathbb{C}$ and $\gamma =\begin{psmallmatrix}
a &  b \\
c &  d 
\end{psmallmatrix}$, we define the double slash operator by
\begin{equation*}
        (f ||_{r,s}\gamma)(z) = (cz+d)^{-r}(c\bar{z}+d)^{-s}f(\gamma z), \qquad  \forall \ z \in \mathcal{H}.
\end{equation*}
We extend this action to $\mathbb{C}[\Gamma_1]$ by linearity. For $z=x+iy$, we set
\begin{equation*}
    \mathbb{L} := i \pi (z - \bar{z})=-2 \pi y.
\end{equation*}

\begin{definition}
Let $r$ and $s$ be positive integers such that $r+s \in 2\mathbb{Z}$. A real analytic function $f: \mathcal{H} \to \mathbb{C}$ is a real analytic modular form of weights $(r,s)$ with respect to $\Gamma_1$ if
\begin{align}
(i)& \quad  (f||_{r,s} \gamma)(z) = f(z), \quad \forall \ \gamma \in \Gamma_1 \text{ and } z \in \mathcal{H}. \label{modinv2} \\
(ii)& \quad  \text{$f(z)$ has the Fourier expansion of the form} \nonumber \\
& \hspace{22mm} f(z)=\sum_{|j| \le M}y^j\left ( \sum_{m, n \ge 0} a_{m, n}^{(j)} \, e^{2 \pi i m z} e^{-2 \pi i n \bar{z}}\right ) \label{f=aqq} \\
  & \quad \text{ where $z=x+iy$, $M \in \mathbb{N}$ and $a_{m, n}^{(j)} \in \mathbb{C}$}. \nonumber
 \end{align}
\end{definition}

We denote the space of all modular forms of weights $(r,s)$ with respect to $\Gamma_1$ by $\mathcal{M}_{r,s}$ and set $\mathcal{M} = \bigoplus_{r,s}\mathcal{M}_{r,s}$. This class of functions was first introduced and studied by Brown in \cite{brown1, brown2, brown3}. 

The space $\mathcal{M}$ is equipped with the operators
\begin{equation*}
\partial_r: \mathcal{M}_{r,s} \to \mathcal{M}_{r+1,s-1} \quad \text{and} \quad \overline{\partial}_s: \mathcal{M}_{r,s} \to \mathcal{M}_{r-1,s+1}
\end{equation*}
given by
\begin{equation*}
\partial_r = 2iy \dfrac{\partial}{\partial z} +r \quad \text{and} \quad \overline{\partial}_s = -2iy \dfrac{\partial}{\partial \bar{z}} +s.
\end{equation*}
They induce bigraded operators $\partial,  \overline{\partial}: \mathcal{M} \to \mathcal{M}$ of bidegree $(1,-1)$ and $(-1,1)$, respectively. If $f \in \mathcal{M}_{r,s}$, then $\partial$ acts on $f$ via $\partial_r$ and similarly $\overline{\partial}$ acts via $\overline{\partial}_s$.

The Laplacian $\Delta_{r,s}:\mathcal{M}_{r,s} \to \mathcal{M}_{r,s}$ is then defined by
\begin{equation}\label{lap}
\Delta_{r,s} = -\overline{\partial}_{s-1}\partial_r + r(s-1) = - \partial_{r-1} \overline{\partial}_s + s(r-1).
\end{equation}
This induces a bigraded operator $\Delta$ of bidegree $(0,0)$ on $\mathcal{M}$. 

An example of a real analytic modular form (and a standard modular form) is the Eisenstein series. For an integer $r\geq 4$ and even, we define the Eisenstein series of weights $(r,0)$ by
\begin{equation}\label{Eisen}
    \mathbb{G}_{r}(z) = -\dfrac{B_r}{2r} + \sum^{\infty}_{n=1} \sigma_{r-1}(n)e^{2\pi i n z},
\end{equation}
where $B_r$ is the $r$-th Bernoulli number and $\sigma_{x}(n)$ is the divisor function, given by $\sum_{d\mid n} d^x$.

\subsection{Modular Iterated Integrals}\label{mii}

In this paper, rather than looking at the entirety of $\mathcal{M}$, we will focus on the subspace of modular iterated integrals, introduced by Brown in \cite{brown1}.
\begin{definition}
We let $\mathcal{MI}_{-1}=0$. For any $n \geq 0 \in \mathbb{Z}$ we define the space of modular iterated integrals of length $n$, $\mathcal{MI}_n$, to be the largest subspace of $\bigoplus_{r,s \geq 0} \mathcal{M}_{r,s}$ which satisfies 
\begin{align*}
\partial \mathcal{MI}_n &\subset \mathcal{MI}_n + M[\mathbb{L}] \times \mathcal{MI}_{n-1}, \\
\overline{\partial} \mathcal{MI}_n &\subset \mathcal{MI}_n + \overline{M}[\mathbb{L}] \times \mathcal{MI}_{n-1},
\end{align*}
where $M$ is the space of holomorphic modular forms. 
\end{definition}
When $n=0$ we have the following description of $\mathcal{MI}_0$.
\begin{lemma}(Lemma 3.10 of \cite{brown1})
\begin{equation*}
    \mathcal{MI}_0= \mathbb{C}[\mathbb{L}^{-1}].
\end{equation*}
\end{lemma}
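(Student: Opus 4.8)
The plan is to prove the two inclusions separately. Note first that since $\mathcal{MI}_{-1}=0$, the defining conditions for $n=0$ collapse to $\partial\,\mathcal{MI}_0\subseteq\mathcal{MI}_0$ and $\overline{\partial}\,\mathcal{MI}_0\subseteq\mathcal{MI}_0$, so $\mathcal{MI}_0$ is simply the \emph{largest} subspace of $\bigoplus_{r,s\geq 0}\mathcal{M}_{r,s}$ stable under both $\partial$ and $\overline{\partial}$. For the inclusion $\mathbb{C}[\mathbb{L}^{-1}]\subseteq\mathcal{MI}_0$ I would argue by direct computation. The function $y$ transforms with weight $(-1,-1)$, so $\mathbb{L}^{-k}$ is a modular form of weight $(k,k)$; applying $\partial_k=2iy\,\partial_z+k$ and $\overline{\partial}_k=-2iy\,\partial_{\bar z}+k$ to $\mathbb{L}^{-k}$ and using $2iy\,\partial_z\,\mathbb{L}^{-k}=-k\,\mathbb{L}^{-k}$ (and the conjugate identity) shows $\partial\,\mathbb{L}^{-k}=\overline{\partial}\,\mathbb{L}^{-k}=0$. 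Hence $\mathbb{C}[\mathbb{L}^{-1}]$ is $\partial,\overline{\partial}$-stable and, by maximality, contained in $\mathcal{MI}_0$.

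For the reverse inclusion I would first reduce to homogeneous elements by showing $\mathcal{MI}_0$ is graded. Writing $V=\mathcal{MI}_0$ and letting $V^{\mathrm{gr}}$ be the span of all bihomogeneous components of elements of $V$, the fact that $\partial$ and $\overline{\partial}$ are bigraded means that the homogeneous components of $\partial f$ are exactly the $\partial$-images of the components of $f$; therefore $V^{\mathrm{gr}}$ is again $\partial,\overline{\partial}$-stable and contains $V$, so maximality forces $V=V^{\mathrm{gr}}$. Fix now a nonzero homogeneous $f\in\mathcal{MI}_0\cap\mathcal{M}_{r,s}$. Because $\mathcal{MI}_0\subseteq\bigoplus_{a,b\geq 0}\mathcal{M}_{a,b}$, the elements $\partial^{\,s+1}f$ and $\overline{\partial}^{\,r+1}f$ would have a negative weight index and so must vanish; thus both ladders terminate, and I may pick the tops $g'=\partial^{\,i_0}f$ and $g''=\overline{\partial}^{\,j_0}f$, which are nonzero and lie in $\ker\partial$, respectively $\ker\overline{\partial}$.

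The technical core is the classification of these kernels. For $g\in\mathcal{M}_{r,s}$, the equation $\partial g=0$ is equivalent to $y^{r}g$ being anti-holomorphic, and the transformation law of $g$ then shows $\overline{y^{r}g}$ is a \emph{holomorphic} modular form of weight $s-r$ on $\Gamma_1$, whose polynomial growth (read off from the Fourier expansion, where only $m,n\geq 0$ occur) rules out a pole at the cusp; since there are no nonzero holomorphic modular forms of negative weight, this forces $g=0$ whenever $s<r$. Symmetrically, $\overline{\partial}g=0$ produces a holomorphic form of weight $r-s$ and forces $g=0$ whenever $r<s$. Applying this to $g'$ of weight $(r+i_0,s-i_0)$ gives $s-i_0\geq r+i_0$, hence $s\geq r$, while $g''$ symmetrically gives $r\geq s$; therefore $r=s$ and $i_0=j_0=0$, i.e. $\partial f=\overline{\partial}f=0$. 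Solving this joint system in weight $(k,k)$ — subtracting the two equations yields $f_x=0$ and adding them yields $y f_y+k f=0$ — gives $f\in\mathbb{C}\,y^{-k}=\mathbb{C}\,\mathbb{L}^{-k}$. Combined with gradedness this proves $\mathcal{MI}_0\subseteq\mathbb{C}[\mathbb{L}^{-1}]$.

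\textbf{Main obstacle.} The routine part is the forward inclusion; the two places needing care are the gradedness step (exploiting maximality together with bihomogeneity of the operators) and, above all, the kernel classification, where one must convert $\partial g=0$ into an anti-holomorphy statement and then invoke both the vanishing of negative-weight holomorphic modular forms and the passage from polynomial growth to holomorphy at the cusp. Once these are in place, the ladder comparison forcing $r=s$ and the annihilation $\partial f=\overline{\partial}f=0$ is the decisive structural step that pins $f$ down to a power of $\mathbb{L}^{-1}$.
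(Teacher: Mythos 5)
The paper does not actually prove this statement: it is imported verbatim as Lemma~3.10 of \cite{brown1}, so there is no in-paper proof to compare yours against. Your argument is correct and self-contained, and it follows essentially the same route as Brown's original proof: for $n=0$ the defining conditions reduce to stability under $\partial$ and $\overline{\partial}$, the forward inclusion is the computation $\partial\,\mathbb{L}^{-k}=\overline{\partial}\,\mathbb{L}^{-k}=0$, and the reverse inclusion rests on the weight-ladder argument (powers of $\partial$, resp.\ $\overline{\partial}$, must eventually leave the first quadrant of weights and hence vanish) together with the identification of $\ker\partial$ in bidegree $(r,s)$ with functions $g$ such that $\overline{y^{r}g}$ is a holomorphic modular form of weight $s-r$, which vanishes for $s<r$. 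Two steps you state tersely but which are standard and fine: the reduction to bihomogeneous elements via maximality, and the fact that a nonzero form of weights $(r+s+1,-1)$ cannot lie in $\bigoplus_{a,b\geq 0}\mathcal{M}_{a,b}$ (linear independence of distinct automorphy factors). The concluding joint system $\partial f=\overline{\partial}f=0$ in weight $(k,k)$ indeed pins $f$ down to $\mathbb{C}\,y^{-k}=\mathbb{C}\,\mathbb{L}^{-k}$, so the proof is complete.
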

We will also be able to provide a simple and concise description of $\mathcal{MI}_1$, but first, we will need to introduce the real analytic Eisenstein series.

\begin{definition}\label{ers}
Let $r,s\geq 0$ and $r+s=w \geq 1$, then the real analytic Eisenstein series of weights $(r,s)$ is given by
\begin{equation*}
    \mathcal{E}_{r,s}= \dfrac{w!}{2 \cdot (2\pi i)^{w+2}} \sum_{\substack{(m,n)\in \mathbb{Z}^2 \\ (m,n)\neq (0,0)}} \dfrac{\mathbb{L}}{(mz+n)^{r+1}(m\bar{z}+n)^{s+1}}.
\end{equation*}
\end{definition}
The following lemma provides us with some properties of the real analytic Eisenstein series.
\begin{lemma}(Proposition 4.1 of \cite{brown1})\label{de=lg}
Let $r,s$ and $w$ be as above, then $\mathcal{E}_{r,s}$ satisfies the following equations:
\begin{align*}
    \partial \mathcal{E}_{w,0} =\mathbb{L}\mathbb{G}_{w+2}&, \\
    \partial \mathcal{E}_{r,s} - (r+1)\mathcal{E}_{r+1,s-1}=0&,\ \hspace{10mm} \text{for } s\geq 1.
    \\[4mm]
    \overline{\partial} \mathcal{E}_{0,w} =\mathbb{L}\overline{\mathbb{G}}_{w+2}&, \\
    \overline{\partial} \mathcal{E}_{r,s} - (s+1)\mathcal{E}_{r-1,s+1}=0&,\ \hspace{10mm} \text{for } r\geq 1. 
    \\[4mm]
    \Delta \mathcal{E}_{r,s} = -(r+s)\mathcal{E}_{r,s}&.
\end{align*}
\end{lemma}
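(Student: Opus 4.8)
The plan is to prove all five relations by direct termwise differentiation of the lattice sum defining $\mathcal{E}_{r,s}$, establishing the two first-order families (for $\partial$ and $\overline{\partial}$) first and then deducing the Laplace equation by composing them. I would write $\mathcal{E}_{r,s}=c_w\,\mathbb{L}\,S_{r,s}$, where $c_w=\frac{w!}{2(2\pi i)^{w+2}}$ and $S_{r,s}=\sum_{(m,n)\neq(0,0)}(mz+n)^{-(r+1)}(m\bar z+n)^{-(s+1)}$. Since the total exponent is $w+2\geq 3>2$, the sum converges absolutely and locally uniformly for $w\geq1$, so termwise differentiation and rearrangement are justified throughout and no analytic continuation is needed. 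One may also assume $w$ even, since for odd $w$ the antipodal symmetry $(m,n)\mapsto(-m,-n)$ forces $S_{r,s}=0$ and both sides of every identity vanish.

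First I would treat the holomorphic relations. Recall $\partial_r=2iy\,\partial_z+r$ with $2iy=z-\bar z$, and note two elementary facts: $\partial_z\mathbb{L}=i\pi$ with $2iy\cdot i\pi=\mathbb{L}$, and the algebraic identity $2iy\,m=(mz+n)-(m\bar z+n)$. Applying $\partial_r$ to $c_w\mathbb{L}S_{r,s}$ via the product rule, the term coming from $\partial_z\mathbb{L}$ contributes $(r+1)\mathbb{L}S_{r,s}$ after absorbing $r\mathbb{L}S_{r,s}$. For the term $2iy\,\mathbb{L}\,\partial_zS_{r,s}$, differentiating the summand brings down $-(r+1)m(mz+n)^{-(r+2)}(m\bar z+n)^{-(s+1)}$; substituting $m=\frac{(mz+n)-(m\bar z+n)}{2iy}$ rewrites this as $-(r+1)\frac{1}{2iy}(S_{r,s}-S_{r+1,s-1})$. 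The $S_{r,s}$ pieces cancel, leaving $\partial\mathcal{E}_{r,s}=(r+1)c_w\mathbb{L}S_{r+1,s-1}=(r+1)\mathcal{E}_{r+1,s-1}$ for $s\geq1$, as stated.

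For the boundary case $s=0$ the same computation applies, except the shifted sum is now $\sum_{(m,n)\neq(0,0)}(mz+n)^{-(w+2)}$, the holomorphic Eisenstein lattice sum of weight $w+2$, giving $\partial\mathcal{E}_{w,0}=(w+1)c_w\mathbb{L}\sum(mz+n)^{-(w+2)}$. The only genuinely arithmetic step is to identify $(w+1)c_w\sum(mz+n)^{-(w+2)}$ with the normalized series $\mathbb{G}_{w+2}$ of \eqref{Eisen}: the Lipschitz formula for the Fourier expansion of $\sum_{n}(mz+n)^{-(w+2)}$ reproduces the $q$-series $\sum_{n\geq1}\sigma_{w+1}(n)q^n$ with the correct normalization, while the constant term $\frac{(w+1)!\,\zeta(w+2)}{(2\pi i)^{w+2}}$ matches $-\frac{B_{w+2}}{2(w+2)}$ through Euler's formula $\zeta(w+2)=\frac{(-1)^{w/2}B_{w+2}(2\pi)^{w+2}}{2(w+2)!}$ (using that $w$ is even). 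This yields $\partial\mathcal{E}_{w,0}=\mathbb{L}\mathbb{G}_{w+2}$. The two antiholomorphic relations follow by the conjugate-symmetric computation, using $\overline\partial_s=-2iy\,\partial_{\bar z}+s$ with $\partial_{\bar z}\mathbb{L}=-i\pi$, $(-2iy)(-i\pi)=\mathbb{L}$, and $(-2iy)m=(m\bar z+n)-(mz+n)$, or simply by the symmetry $z\leftrightarrow\bar z$, which exchanges the roles of $r$ and $s$ and sends $\mathbb{G}$ to $\overline{\mathbb{G}}$.

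Finally, the Laplace equation follows formally from the first-order relations. Using $\Delta_{r,s}=-\overline\partial_{s-1}\partial_r+r(s-1)$ for $s\geq1$: applying $\partial$ gives $(r+1)\mathcal{E}_{r+1,s-1}$, and then $\overline\partial$ (acting via $\overline\partial_{s-1}$, with first index $r+1\geq1$) gives $(r+1)s\,\mathcal{E}_{r,s}$, so $\Delta\mathcal{E}_{r,s}=\big[-(r+1)s+r(s-1)\big]\mathcal{E}_{r,s}=-(r+s)\mathcal{E}_{r,s}$. The boundary case $s=0$ (and symmetrically $r=0$) is handled identically using the alternate form $\Delta_{r,s}=-\partial_{r-1}\overline\partial_s+s(r-1)$. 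The main obstacle in the whole argument is the constant-term bookkeeping in the $s=0$ case: aligning every factor of $2\pi i$, the factorial, and the sign from Euler's formula so that the lattice-sum normalization $c_w$ precisely reproduces the Bernoulli-number constant in \eqref{Eisen}; everything else is routine termwise calculus justified by absolute convergence.
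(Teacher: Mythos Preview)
Your proof is correct. The paper itself does not prove this lemma: it is stated with the citation ``(Proposition 4.1 of \cite{brown1})'' and no argument is given. Your direct termwise computation using the identity $2iy\cdot m=(mz+n)-(m\bar z+n)$, together with the Lipschitz/Euler normalization check at the boundary $s=0$, is the standard proof and is precisely what one expects Brown's original argument to be, so there is nothing to compare.
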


As mentioned above, the functions $\mathcal{E}_{r,s}$ can also be used to provide a description of $\mathcal{MI}_1$.

\begin{lemma}(Corollary 4.4 of \cite{brown1})
\begin{equation*}
    \mathcal{MI}_1 = \mathcal{MI}_0 \otimes_{\mathbb{C}} \bigoplus_{\substack{r,s \geq 0 \\ r+s \geq 2}} \  \mathbb{C} \,\mathcal{E}_{r,s}.
\end{equation*}

\end{lemma}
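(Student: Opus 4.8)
The plan is to prove the two inclusions separately. Write $W := \mathcal{MI}_0 \otimes_{\mathbb{C}} \bigoplus_{r,s\geq 0,\, r+s\geq 2} \mathbb{C}\,\mathcal{E}_{r,s}$ for the claimed right-hand side, so that $W$ is spanned by the functions $\mathbb{L}^{-k}\mathcal{E}_{r,s}$ with $k\geq 0$ and $r+s\geq 2$. First I would record the elementary fact that $\partial\mathbb{L}=\overline{\partial}\mathbb{L}=0$: since $\mathbb{L}$ has weights $(-1,-1)$, the operator $\partial$ acts via $\partial_{-1}=2iy\tfrac{\partial}{\partial z}-1$, and because $\tfrac{\partial\mathbb{L}}{\partial z}=i\pi$ with $2iy\cdot i\pi=\mathbb{L}$ one gets $\partial_{-1}\mathbb{L}=\mathbb{L}-\mathbb{L}=0$ (and similarly for $\overline{\partial}$). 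Hence $\partial$ and $\overline{\partial}$ are derivations commuting with multiplication by any power of $\mathbb{L}$, so that $\partial(\mathbb{L}^{-k}g)=\mathbb{L}^{-k}\partial g$. Since $\partial,\overline{\partial}$ are bigraded and the defining conditions of $\mathcal{MI}_1$ respect the bidegree, both $\mathcal{MI}_1$ and $W$ are bigraded, and it suffices to argue weight by weight inside a fixed $\mathcal{M}_{r,s}$.

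For the inclusion $W\subseteq\mathcal{MI}_1$ I would simply verify that $W$ satisfies the defining property of $\mathcal{MI}_1$, whence $W\subseteq\mathcal{MI}_1$ by maximality. Combining Lemma \ref{de=lg} with $\partial(\mathbb{L}^{-k}g)=\mathbb{L}^{-k}\partial g$, one computes that $\partial(\mathbb{L}^{-k}\mathcal{E}_{r,s})$ equals $(r+1)\mathbb{L}^{-k}\mathcal{E}_{r+1,s-1}$ when $s\geq 1$ and equals $\mathbb{L}^{1-k}\mathbb{G}_{r+2}$ when $s=0$. The former lies in $W$ (the shifted indices still satisfy $r+1,s-1\geq 0$ and $(r+1)+(s-1)\geq 2$) and the latter lies in $M[\mathbb{L}]\times\mathcal{MI}_0$. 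Thus $\partial W\subseteq W+M[\mathbb{L}]\times\mathcal{MI}_0$, and the identical computation with the $\overline{\partial}$-equations of Lemma \ref{de=lg} gives $\overline{\partial}W\subseteq W+\overline{M}[\mathbb{L}]\times\mathcal{MI}_0$.

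For the reverse inclusion $\mathcal{MI}_1\subseteq W$ I would work within a fixed total weight $N=r+s$, on which $\partial$ and $\overline{\partial}$ act as raising and lowering operators along the finite chain $\mathcal{M}_{N,0},\mathcal{M}_{N-1,1},\dots,\mathcal{M}_{0,N}$, the boundary relations being $\partial\mathcal{E}_{N,0}=\mathbb{L}\mathbb{G}_{N+2}$ and $\overline{\partial}\mathcal{E}_{0,N}=\mathbb{L}\overline{\mathbb{G}}_{N+2}$ of Lemma \ref{de=lg}. Given $f\in\mathcal{MI}_1\cap\mathcal{M}_{r,s}$ with $s\geq 1$, the defining condition lets me write $\partial f=p+q$ with $p\in\mathcal{MI}_1\cap\mathcal{M}_{r+1,s-1}$ and $q\in(M[\mathbb{L}]\times\mathcal{MI}_0)\cap\mathcal{M}_{r+1,s-1}$; descending induction along the chain puts $p\in W$, and the surjectivity implicit in Lemma \ref{de=lg} lets me find $g_1\in W$ with $\partial g_1=p$ (each term $\mathbb{L}^{-k}\mathcal{E}_{r+1-k,s-1-k}$ of $p$ is hit by $\mathbb{L}^{-k}\mathcal{E}_{r-k,s-k}\in W$). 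It then remains to realise $q$ as $\partial g_2$ with $g_2\in W$. A weight count shows $q$ is a combination of $\mathbb{L}^{1-s}\phi$ with $\phi$ in the space $M_{r-s+2}$ of holomorphic modular forms of weight $r-s+2$, while the only elements of $W$ whose $\partial$-image lands in $M[\mathbb{L}]\times\mathcal{MI}_0$ are the multiples of $\mathbb{L}^{-s}\mathcal{E}_{r-s,0}$, for which $\partial(\mathbb{L}^{-s}\mathcal{E}_{r-s,0})=\mathbb{L}^{1-s}\mathbb{G}_{r-s+2}$. Hence $q$ is matchable precisely when $\phi$ is a scalar multiple of the Eisenstein series $\mathbb{G}_{r-s+2}$. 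Granting this, $h:=f-g_1-g_2\in\mathcal{MI}_1\cap\mathcal{M}_{r,s}$ lies in $\ker\partial$; solving $\partial_r h=0$ identifies $\ker\partial\cap\mathcal{M}_{r,s}$ with $\{\,y^{-r}\overline{G}:G\text{ holomorphic modular of weight }s-r\,\}$, and re-running the argument with $\overline{\partial}$ forces $G$ to be Eisenstein too, so that $h\in W$ (together with, in the balanced case $r=s$, a pure power of $\mathbb{L}$, which already lies in $\mathcal{MI}_0$). The boundary cases $s=0$ and $r=0$ are the same computation run with the opposite operator.

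The crux of the argument, and the step I expect to be the main obstacle, is the claim that no cusp forms can occur: both the residual form $\phi$ above and the holomorphic $G$ in the kernel analysis must be Eisenstein. This is exactly where one must invoke the finer structure of $\mathcal{M}_{r,s}$ from \cite{brown1}. Concretely, for a cusp form $\psi$ there is no real analytic modular form $F$ with $\partial F=\mathbb{L}\psi$ admitting a Fourier expansion of the admissible shape \eqref{f=aqq}: the obstruction is cohomological, governed by the nonvanishing period polynomial of $\psi$, equivalently by the fact that the relevant Eichler-type primitive acquires a quasimodular anomaly proportional to $\overline{E_2}$ that cannot be absorbed into $\overline{M}[\mathbb{L}]\times\mathcal{MI}_0$. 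Reformulated, $W\cap\mathcal{M}_{r,s}$ is the span of the explicit $\Delta$-eigenvectors $\mathbb{L}^{-k}\mathcal{E}_{r-k,s-k}$ coming from Eisenstein series, and the content of the theorem is that the complementary cuspidal eigenvectors of $\mathcal{M}_{r,s}$ fail the $\mathcal{MI}_1$ condition. Once this exclusion is in place the descent closes, and combining it with the first inclusion yields $\mathcal{MI}_1=W$.
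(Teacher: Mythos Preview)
The paper does not supply its own proof of this lemma; it is simply quoted as Corollary~4.4 of \cite{brown1} and left at that. There is therefore no argument in the present paper to compare your attempt against.

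That said, a few remarks on your sketch. The inclusion $W\subseteq\mathcal{MI}_1$ is correct, and the observation $\partial(\mathbb{L}^{-k}g)=\mathbb{L}^{-k}\partial g$ together with Lemma~\ref{de=lg} is exactly the right mechanism; this half is complete. For the reverse inclusion your architecture is the natural one, and you correctly isolate the genuine obstruction---that only Eisenstein series, not cusp forms, can appear in the residual term $q$ and in the kernel element $h$---which you honestly defer back to \cite{brown1}. Since the paper under review also defers the entire statement to \cite{brown1}, this is not a deficiency relative to the paper. Two points that would need tightening in a self-contained write-up: the induction scheme should have its base case made explicit and the interaction between the $\partial$- and $\overline\partial$-arguments clarified (as written they run in opposite directions along the $(r,s)$-chain, and one should check the two close up without circularity); and note that the formula as transcribed in this paper appears to omit $\mathcal{MI}_0$ itself from the right-hand side, even though $\mathcal{MI}_0\subset\mathcal{MI}_1$ by the filtration.
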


Therefore, we know of all the possible functions that can exist in $\mathcal{MI}_0$ and $\mathcal{MI}_1$. The same cannot be said, however, for higher lengths. Fortunately, the following lemma, which generalises Proposition 4.1 of \cite{brown1}, gives us one way of checking if a function is in $\mathcal{MI}_n$, for any $n$. This lemma will be useful when proving the main theorem of this paper. 

\begin{lemma}\label{thmFrs}
Let $r,s  \geq 0$ be integers and set $2w = r+s$. Suppose $F_{r,s} \in \mathcal{M}_{r,s}$ are a family of elements such that 
\begin{align}
	\partial F_{2w,0} &\in M[\mathbb{L}] \times \mathcal{MI}_{n-1}, \label{Frs1} \\
	\partial F_{r,s} - (r+1)F_{r+1,s-1} &\in M[\mathbb{L}] \times \mathcal{MI}_{n-1},\qquad \text{if } s\geq 1, \label{Frs2} 
	\end{align}
and
\begin{align}
	\overline\partial {F}_{0,2w} &\in \overline{M}[\mathbb{L}] \times \mathcal{MI}_{n-1}, \label{Frs3} \\
	\overline\partial {F}_{r,s} - (s+1)F_{r-1,s+1} &\in \overline{M}[\mathbb{L}] \times \mathcal{MI}_{n-1}, \qquad \text{if } r\geq 1 \label{Frs4}.
	\end{align}
Then the $F_{r,s} \in \mathcal{MI}_n$.
\end{lemma}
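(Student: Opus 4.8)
The plan is to exploit the defining property of $\mathcal{MI}_n$: it is the \emph{largest} subspace $V$ of $\bigoplus_{r,s\geq0}\mathcal{M}_{r,s}$ satisfying $\partial V\subseteq V+M[\mathbb{L}]\times\mathcal{MI}_{n-1}$ and $\overline\partial V\subseteq V+\overline{M}[\mathbb{L}]\times\mathcal{MI}_{n-1}$. Because of this maximality, to prove $F_{r,s}\in\mathcal{MI}_n$ it is enough to exhibit \emph{any} single subspace $V$ that contains all the $F_{r,s}$ and that itself satisfies these two closure conditions; maximality then forces $V\subseteq\mathcal{MI}_n$. I would first record the standard observation that a largest such subspace exists: if $V_1$ and $V_2$ both satisfy $\partial V_i\subseteq V_i+M[\mathbb{L}]\times\mathcal{MI}_{n-1}$, then so does $V_1+V_2$, and likewise for $\overline\partial$, so the sum of all subspaces with the closure property is again a subspace with the closure property.

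The natural witness is the finite-dimensional span $V=\sum_{r+s=2w}\mathbb{C}\,F_{r,s}$. Since each $F_{r,s}\in\mathcal{M}_{r,s}$ with $r,s\geq0$, this $V$ is a subspace of $\bigoplus_{r,s\geq0}\mathcal{M}_{r,s}$ and it contains every member of the family. The key point, which I would verify next, is that $V$ inherits the closure conditions directly from the hypotheses. Because $\partial$ raises $r$ and lowers $s$ by one (and $\overline\partial$ does the reverse), both operators preserve the total weight $2w$, so their action stays within the same weight block.

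For closure under $\partial$ I would argue elementwise on the spanning set. When $s\geq1$, hypothesis \eqref{Frs2} reads $\partial F_{r,s}=(r+1)F_{r+1,s-1}+\varepsilon_{r,s}$ with $\varepsilon_{r,s}\in M[\mathbb{L}]\times\mathcal{MI}_{n-1}$; here $(r+1)F_{r+1,s-1}\in V$ because $(r+1)+(s-1)=2w$ with $r+1\geq0$ and $s-1\geq0$, so $\partial F_{r,s}\in V+M[\mathbb{L}]\times\mathcal{MI}_{n-1}$. For the top element $F_{2w,0}$, hypothesis \eqref{Frs1} gives $\partial F_{2w,0}\in M[\mathbb{L}]\times\mathcal{MI}_{n-1}\subseteq V+M[\mathbb{L}]\times\mathcal{MI}_{n-1}$. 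Extending by linearity across the spanning set (both $V$ and $M[\mathbb{L}]\times\mathcal{MI}_{n-1}$ being $\mathbb{C}$-subspaces) yields $\partial V\subseteq V+M[\mathbb{L}]\times\mathcal{MI}_{n-1}$. The $\overline\partial$-closure is identical after exchanging the roles of $r$ and $s$, using \eqref{Frs4} for $r\geq1$ and \eqref{Frs3} for the bottom element $F_{0,2w}$. Having checked both conditions, maximality of $\mathcal{MI}_n$ gives $V\subseteq\mathcal{MI}_n$, hence $F_{r,s}\in\mathcal{MI}_n$ for every $(r,s)$ with $r+s=2w$.

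I expect the only genuine subtlety to be the conceptual first step, namely recognising that maximality of $\mathcal{MI}_n$ converts the problem into the construction of a single witnessing subspace rather than a direct membership verification. Everything afterward is bookkeeping: matching the two pairs of hypotheses against the two clauses of the closure definition and observing that the partner terms $F_{r+1,s-1}$ and $F_{r-1,s+1}$ produced by $\partial$ and $\overline\partial$ never leave the span $V$. The one place I would be careful is at the boundary indices $s=0$ and $r=0$, where there is no partner element to absorb into $V$ and the whole derivative must be an error term; this is precisely why \eqref{Frs1} and \eqref{Frs3} are stated separately from \eqref{Frs2} and \eqref{Frs4}.
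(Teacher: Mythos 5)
Your proposal is correct and follows essentially the same route as the paper: take the span of the family $F_{r,s}$ as a witnessing subspace, verify the two closure conditions directly from the four hypotheses, and invoke the maximality of $\mathcal{MI}_n$. Your added remarks on why a largest such subspace exists and on the boundary cases $s=0$, $r=0$ are just more explicit bookkeeping of the same argument.
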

\begin{proof}
By the definition of $\mathcal{MI}_n$, any space $B \subset \bigoplus_{r,s \geq 0}  \mathcal{M}_{r,s}$  which satisfies 
\begin{align*}
	\partial B &\subseteq B + M[\mathbb{L}] \times \mathcal{MI}_{n-1},\\ 
	\overline{\partial} B &\subseteq B + \overline{M}[\mathbb{L}] \times \mathcal{MI}_{n-1} 
\end{align*}
must be contained within $\mathcal{MI}_{n} $. Let $B$ denote the space of the family of elements $F_{r,s}$ then, by equations (\ref{Frs1}) and (\ref{Frs2}),
\begin{align*}
	\partial F_{2w,0} \subseteq M[\mathbb{L}] \times \mathcal{MI}_{n-1}&,\\
	\partial F_{r,s} \subseteq B + M[\mathbb{L}] \times \mathcal{MI}_{n-1}&, \qquad \text{if } s\geq 1  ,
	\end{align*}
and hence $\partial B \subseteq B + M[\mathbb{L}] \times \mathcal{MI}_{n-1}$. Similarly, by equations (\ref{Frs3}) and (\ref{Frs4}), we have $\overline{\partial} B \subseteq B + \overline{M}[\mathbb{L}] \times \mathcal{MI}_{n-1}$. Therefore $B$ must be contained in $\mathcal{MI}_n$ due to the maximality of $\mathcal{MI}_n$ and we conclude that $F_{r,s} \in \mathcal{MI}_n$.
\end{proof}

\subsection{Representations of \texorpdfstring{$\Gamma_1$}{Gamma1}}

Let $n \geq 0$, then we define
\begin{equation*}
    V_{2n} = \bigoplus_{r+s=2n} X^rY^s\mathbb{Q},
\end{equation*}
which is equipped with the (single) slash operator defined by
\begin{equation*}
    (X,Y)|_{\gamma} = (aX+bY, cX+dY), \qquad \forall \ \gamma =\begin{psmallmatrix}
a &  b \\
c &  d 
\end{psmallmatrix} \in \Gamma_1.
\end{equation*}
For any $f \in \mathcal{M}_{r,0}$, we define the one form $\underline{f}$ by
\begin{equation*}
    \underline{f}(z) = 2 \pi i f(z) (X-zY)^{r-2} dz.
\end{equation*}
The following two lemmas will be extremely useful throughout this paper.
\begin{lemma}(Proposition 7.1 of \cite{brown1})\label{7.1}
Let $f: \mathcal{H} \to V_{2n} \otimes \mathbb{C}$ be real analytic, then it can be written in the form 
\begin{equation*}
    f = \sum_{r+s=2n} f_{r,s} (z) (X-zY)^r(X-\bar{z}Y)^s
\end{equation*}
such that $(z-\bar{z})^{2n}f_{r,s}:\mathcal{H}\to \mathbb{C}$ are real analytic. The function $f$ is modular equivariant:
\begin{equation}\label{modeq}
    f(\gamma(z))|_{\gamma} = f(z) , \qquad \forall \ \gamma \in \Gamma_1,
\end{equation}
if and only if every $f_{r,s}$ is modular of weights (r,s) (as defined by equation \eqref{modinv2}).
\end{lemma}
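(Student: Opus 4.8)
The plan is to treat the two assertions separately: first establishing the decomposition together with its regularity, then deriving the equivalence between equivariance and weight-$(r,s)$ modularity from it.

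For the first part, I would begin by observing that, since $z$ lies in the upper half plane, we have $z-\bar z = 2iy \neq 0$, so the two linear forms $X-zY$ and $X-\bar z Y$ are linearly independent and hence form a basis of the degree-one part $V_2\otimes\mathbb{C}$. Consequently the monomials $\{(X-zY)^r(X-\bar z Y)^s : r+s=2n\}$ form a basis of $V_{2n}\otimes\mathbb{C}$ for each fixed $z$, which immediately gives both existence and uniqueness of the coefficients $f_{r,s}(z)$ in the decomposition. To obtain the regularity claim, I would invert the change of basis explicitly: writing $U=X-zY$ and $W=X-\bar z Y$, one solves
\begin{equation*}
  X = \frac{zW-\bar z U}{z-\bar z}, \qquad Y = \frac{W-U}{z-\bar z}.
\end{equation*}
Substituting these into the given expansion $f=\sum_{p+q=2n} c_{p,q}(z)X^pY^q$, with each $c_{p,q}$ real analytic, and collecting the coefficient of $U^rW^s$ expresses $f_{r,s}$ as a linear combination of the $c_{p,q}$ whose coefficients are polynomials in $z,\bar z$ and $(z-\bar z)^{-1}$. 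Since every monomial $X^pY^q$ contributes exactly the factor $(z-\bar z)^{-2n}$, multiplying through by $(z-\bar z)^{2n}$ clears all denominators and leaves a polynomial combination of the real-analytic $c_{p,q}$; hence $(z-\bar z)^{2n}f_{r,s}$ is real analytic.

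For the second part, the crux is a single transformation computation. Substituting $X\mapsto aX+bY$ and $Y\mapsto cX+dY$ into $X-(\gamma z)Y$, and using $\gamma z=(az+b)/(cz+d)$ together with $ad-bc=1$, I would show that
\begin{equation*}
  (X-(\gamma z)Y)\big|_\gamma = (cz+d)^{-1}(X-zY),
\end{equation*}
and likewise $(X-(\overline{\gamma z})Y)|_\gamma = (c\bar z+d)^{-1}(X-\bar z Y)$, where $\overline{\gamma z}=\gamma\bar z$ because $\gamma$ has integer entries. Since $|_\gamma$ acts as an algebra homomorphism on polynomials in $X,Y$, applying it to $f(\gamma z)=\sum_{r+s=2n} f_{r,s}(\gamma z)(X-(\gamma z)Y)^r(X-(\overline{\gamma z})Y)^s$ yields
\begin{equation*}
  f(\gamma z)\big|_\gamma = \sum_{r+s=2n} (cz+d)^{-r}(c\bar z+d)^{-s}\, f_{r,s}(\gamma z)\,(X-zY)^r(X-\bar z Y)^s.
\end{equation*}
Comparing this with $f(z)=\sum_{r+s=2n} f_{r,s}(z)(X-zY)^r(X-\bar z Y)^s$ and invoking the uniqueness of the decomposition from the first part, the equivariance $f(\gamma z)|_\gamma=f(z)$ holds if and only if $(cz+d)^{-r}(c\bar z+d)^{-s}f_{r,s}(\gamma z)=f_{r,s}(z)$ for every pair $(r,s)$, which is precisely the condition $(f_{r,s}||_{r,s}\gamma)(z)=f_{r,s}(z)$ of \eqref{modinv2}.

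I expect the main obstacle to be bookkeeping rather than anything conceptual: the transformation identity for $(X-(\gamma z)Y)|_\gamma$ is the essential ingredient, and the rest follows from it together with the basis property. The one point requiring genuine care is ensuring that the passage between equivariance and the per-weight modularity is a true equivalence in both directions, which is exactly why the uniqueness of the decomposition established in the first part must be secured before the coefficient comparison is carried out.
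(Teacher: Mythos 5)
Your argument is correct and is the standard proof of this statement; the paper itself does not reprove it but simply cites Proposition 7.1 of Brown, whose argument is the same basis-change computation you give (linear independence of $X-zY$ and $X-\bar zY$ for $z\in\mathcal{H}$, inversion of the change of basis to control $(z-\bar z)^{2n}f_{r,s}$, and the identity $(X-(\gamma z)Y)|_\gamma=(cz+d)^{-1}(X-zY)$ combined with uniqueness of coefficients). Nothing further is needed.
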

If any of the above $f_{r,s}$ are also of the form \eqref{f=aqq}, then they are real analytic modular forms of weights $(r,s)$. We now give a lemma involving the operators $\partial$ and $\bar{\partial}$.

\begin{lemma}(Proposition 7.2 of \cite{brown1})\label{dF=A}
Let $F,A,B:\mathcal{H} \to V_{2n} \otimes \mathbb{C}$ be real analytic, then the equation 
\begin{equation*}
    \dfrac{\partial F}{\partial z} = \pi i A(z)
\end{equation*}
is equivalent to the following system of equations (where $r,s  \geq0$ and $r+s=2n$):
\begin{align*}
    \partial F_{2n,0}&= \mathbb{L}A_{2n,0}, \\
    \partial F_{r,s}-(r+1)F_{r+1,s-1}&= \mathbb{L}A_{r,s}, \hspace{10mm} \text{for } s \geq 1.
\end{align*}
Similarly, the equation 
\begin{equation*}
    \dfrac{\partial F}{\partial \bar{z}} = -\pi i B(z)
\end{equation*}
is equivalent to the following system of equations (where $r,s  \geq0$ and $r+s=2n$):
\begin{align*}
    \overline{\partial} F_{0,2n}&= \mathbb{L}B_{0,2n}, \\
    \overline{\partial} F_{r,s}-(s+1)F_{r-1,s+1}&= \mathbb{L}B_{r,s}, \hspace{10mm} \text{for } r \geq 1.
\end{align*}
\end{lemma}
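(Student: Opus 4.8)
The plan is to prove the equivalence by a direct computation: expand $F$ in the natural basis of $V_{2n}\otimes\mathbb{C}$ and compare coefficients on both sides. By Lemma \ref{7.1} write $F = \sum_{r+s=2n} F_{r,s}(z)\,u^r v^s$, where I abbreviate $u = X-zY$ and $v = X-\bar z Y$. For each fixed $z\in\mathcal{H}$ the linear forms $u,v$ are independent (as $z\neq\bar z$), so the monomials $\{u^r v^s : r+s=2n\}$ form a basis of $V_{2n}\otimes\mathbb{C}$; hence the vector-valued equation $\partial F/\partial z = \pi i\,A$ holds if and only if the coefficient of $u^r v^s$ agrees on each side for every $(r,s)$. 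The whole argument then reduces to computing the left-hand coefficients explicitly.

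First I would apply the Wirtinger derivative $\partial/\partial z$, using $\partial u/\partial z = -Y$ and $\partial v/\partial z = 0$, to obtain
\[
\frac{\partial F}{\partial z} = \sum_{r+s=2n}\frac{\partial F_{r,s}}{\partial z}\,u^r v^s \;-\; \sum_{r+s=2n} r\,F_{r,s}\,Y\,u^{r-1}v^s .
\]
The term $Y\,u^{r-1}v^s$ is not yet in the chosen basis, so the next step is to re-express $Y$. From $u - v = (\bar z - z)Y$ together with $\mathbb{L}=i\pi(z-\bar z)$ one gets $Y = -\tfrac{i\pi}{\mathbb{L}}(u-v)$, which splits each $Y\,u^{r-1}v^s$ into a diagonal piece $u^r v^s$ and an off-diagonal piece $u^{r-1}v^{s+1}$. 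Simultaneously I would rewrite the derivative of each component through the operator $\partial_r = 2iy\,\partial/\partial z + r$: since $2iy = z-\bar z = -\tfrac{i\mathbb{L}}{\pi}$, we have $\partial F_{r,s}/\partial z = \tfrac{i\pi}{\mathbb{L}}\big(\partial F_{r,s} - r F_{r,s}\big)$, and this is exactly where the factor $\mathbb{L}$ on the right-hand side of the system originates.

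The decisive observation is that the diagonal contribution $-r F_{r,s}\,u^r v^s$ coming from $\partial F_{r,s}/\partial z$ cancels exactly against the diagonal piece produced by the substitution for $Y$. After this cancellation, reindexing the surviving off-diagonal sum by $(r,s)\mapsto(r+1,s-1)$ (the $r=0$ summand vanishing because of its factor $r$) collects everything into
\[
\frac{\partial F}{\partial z} = \frac{i\pi}{\mathbb{L}}\Big(\partial F_{2n,0}\,u^{2n} + \sum_{\substack{r+s=2n\\ s\ge 1}}\big(\partial F_{r,s}-(r+1)F_{r+1,s-1}\big)u^r v^s\Big).
\]
Matching the coefficient of each $u^r v^s$ against $\pi i\,A_{r,s}$ and clearing the common factor $i\pi/\mathbb{L}$ yields precisely $\partial F_{2n,0}=\mathbb{L}A_{2n,0}$ and $\partial F_{r,s}-(r+1)F_{r+1,s-1}=\mathbb{L}A_{r,s}$ for $s\ge 1$; since the basis is linearly independent, this matching is an equivalence in both directions. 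The antiholomorphic statement is proved identically, replacing $\partial/\partial z$ by $\partial/\partial\bar z$ (so $\partial v/\partial\bar z = -Y$ and $\partial u/\partial\bar z = 0$) and $\partial_r$ by $\overline{\partial}_s = -2iy\,\partial/\partial\bar z + s$; the same diagonal cancellation occurs and the reindexing $(r,s)\mapsto(r-1,s+1)$ produces the $\overline{\partial}$-system with $\mathbb{L}B_{r,s}$ on the right.

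The step I expect to be the main obstacle is not conceptual but bookkeeping: keeping the powers of $i$ and $\pi$ and the sign of $\mathbb{L}$ consistent throughout the substitution for $Y$ and the rewriting of $\partial F_{r,s}/\partial z$, and correctly handling the two boundary indices $s=0$ and (in the conjugate case) $r=0$ that single out the first equation of each system. Everything hinges on the clean cancellation of the diagonal $r F_{r,s}\,u^r v^s$ terms; once that is verified, the remainder is routine reindexing and coefficient comparison.
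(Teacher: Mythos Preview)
Your proof is correct. The paper does not actually supply its own proof of this lemma; it is stated with a citation to Proposition~7.2 of \cite{brown1} and used as a black box. Your direct computation---expanding $F$ in the basis $(X-zY)^r(X-\bar zY)^s$, substituting $Y=-\tfrac{i\pi}{\mathbb{L}}\big((X-zY)-(X-\bar zY)\big)$, and observing the diagonal cancellation before reindexing---is precisely the standard argument one would expect for this statement, and the bookkeeping with $\mathbb{L}$ and the boundary cases $s=0$ (resp.\ $r=0$) is handled correctly.
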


\section{Review of the Length Two Case}\label{trip}

The aim of this paper is to produce Laplace-eigenvalue equations involving ``new" functions in $\mathcal{MI}_3$; we will see how this is achieved in the next section. However, in order to best understand the underlying ideas of this theory, we first provide a review of the length two case given in Section 9 of \cite{brown1}. 

\subsection{Double Eisenstein Integrals}\label{doub}

Let $a> 0 \in \mathbb{Z}$, then we define 
\begin{align}
    \mathcal{E}_{2a} (z) &:= \sum_{r+s=2a} \mathcal{E}_{r,s}(X-z Y)^r(X-\bar{z}Y)^s, \label{e2a}
    \\
    E_{2a+2}(z) &:= 2 \pi i \, \mathbb{G}_{2a+2}(z)(X- zY)^{2a}, \label{E2a}
    \\[1mm]
     \underline{E}_{2a+2}(z) &:= E_{2a+2}(z) \, dz. \nonumber
\end{align}
The functions $\mathcal{E}_{2a}(z)$ and $\underline{E}_{2a+2}(z)$ are modular equivariant and closely linked to one another. By Lemmas \ref{de=lg} and \ref{dF=A}, we have
\begin{equation}\label{Elinks}
    \dfrac{\partial \mathcal{E}_{2a}(z)}{\partial z}  = \dfrac{1}{2} E_{2a+2}(z) \quad \text{ and } \quad  \dfrac{\partial \mathcal{E}_{2a}(z)}{\partial \bar{z}} = \dfrac{1}{2} \overline{E}_{2a+2} (z).
\end{equation}
For $a,b \geq 2$, we consider the one-forms
\begin{align*}
    D_{2a,2b}(z)&: \mathcal{H} \to \left(V_{2a-2} \otimes V_{2b-2} \right) \otimes \left(\mathbb{C} \, dz + \mathbb{C} \,d\bar z \right) \\
     D_{2a,2b}(z)&:= \underline{E}_{2a}(z) \otimes \mathcal{E}_{2b-2}(z)+ \mathcal{E}_{2a-2}(z) \otimes \overline{\underline{E}}_{2b}(z).
\end{align*}
They are modular equivariant:
\begin{equation*}
        D_{2a,2b}(\gamma z)|_{\gamma} = D_{2a,2b}(z), \qquad \forall \ \gamma\in \Gamma_1,
\end{equation*}
and, by Lemma 9.1 of \cite{brown1}, closed:
\begin{equation*}
    d D_{2a,2b} = 0.
\end{equation*}
We then consider the integral 
\begin{equation*}
    K_{2a,2b}(z) =  -\dfrac{1}{2} \int_{z}^{\reg} D_{2a,2b}(\tau).
\end{equation*}
This integral depends only on the homotopy class of the chosen path. The integral is regularised as in Section 8 of \cite{brown1} (see also, Section 4 of \cite{BrownMultiple}).
To see what this regularisation means we first write 
\begin{equation*}
    D_{2a,2b}(\tau) = D^{(0)}_{2a,2b}(\tau) + D^{(\infty)}_{2a,2b}(\tau),
\end{equation*}
such that $D^{(0)}_{2a,2b}(\tau)$ is the part of $D_{2a,2b}(\tau)$ that exponentially tends to zero as $\tau$ tends to $i\infty$.
Now the regularisation above can be viewed as
\begin{equation*}
    K_{2a,2b}(z) = -\dfrac{1}{2} \int^{\infty}_{z} D^{(0)}_{2a,2b}(\tau) + \dfrac{1}{2} \int^{z}_{0} D^{(\infty)}_{2a,2b}(\tau).
\end{equation*}
This allows us to see that $K_{2a,2b}(z)$ satisfies the following differential equations:
\begin{align}
    \dfrac{\partial}{\partial z} K_{2a,2b}(z) &= \pi i  \mathbb{G}_{2a}(z)(X-zY)^{2a-2}\otimes \mathcal{E}_{2b-2}(z)\label{dzK},\\[1mm]
     \dfrac{\partial}{\partial \bar{z}} K_{2a,2b}(z) &=  \mathcal{E}_{2a-2}(z)\otimes  \overline{\pi i\mathbb{G}_{2b}}(z)(X-\bar{z}Y)^{2b-2}. \label{dzbar}
\end{align}

For $k \geq 0$, there is a $\Gamma_1$ equivariant projector given by
\begin{align*}
    \delta^k &: V_{2a} \otimes V_{2b} \to V_{2a+2b-2k} \\
    \delta^k &:= m \circ \left( \dfrac{\partial}{\partial X} \otimes \dfrac{\partial}{\partial Y} - \dfrac{\partial}{\partial Y}\otimes \dfrac{\partial }{\partial X} \right)^k,
    \end{align*}
where $m:\mathbb{Q}[X,Y]\otimes \mathbb{Q}[X,Y] \to \mathbb{Q}[X,Y]$ is the multiplication map. We have the following lemma involving this projector.

\begin{lemma}\label{dk}(Lemma 7.3 of \cite{brown1}) Suppose that $D:\mathcal{H} \to V_{2n}$ and set
\begin{equation*}
    F = \dfrac{\delta^k}{(k!)^2} \Big( \mathbb{G}_{2m+2}(X-zY)^{2m} \otimes D   \Big).
\end{equation*}
Then $F: \mathcal{H} \to V_{2m+2n-2k}$ vanishes if either $k>2m$ or $k>2n$, and otherwise satisfies 
\begin{equation*}
    F_{r,s} = (z-\bar{z})^k \mathbb{G}_{2m+2}\binom{2m}{k}\binom{s+k}{k}D_{r-2m+k,s+k},
\end{equation*}
where we set $D_{r,s}=0$ if either $r$ or $s<0$.
\end{lemma}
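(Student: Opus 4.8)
The plan is to reduce everything to a single explicit computation of the differential operator $\mathcal{D} := \tfrac{\partial}{\partial X} \otimes \tfrac{\partial}{\partial Y} - \tfrac{\partial}{\partial Y}\otimes \tfrac{\partial}{\partial X}$ on a well-chosen basis, then iterate and multiply. Writing $u = X - zY$ and $v = X - \bar z Y$, I would first use Lemma \ref{7.1} to expand the general real analytic $D:\mathcal{H}\to V_{2n}$ as $D = \sum_{c+d=2n} D_{c,d}\, u^c v^d$, and note that the left tensor factor $\mathbb{G}_{2m+2}(X-zY)^{2m} = \mathbb{G}_{2m+2}\, u^{2m}$ is pure in $u$. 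Since $\delta^k = m \circ \mathcal{D}^k$ (with $m$ the multiplication map) is bilinear, it suffices to evaluate $\mathcal{D}^k$ on each elementary tensor $u^{2m}\otimes u^c v^d$ and then multiply.

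The heart of the argument is the basic step. Using $\tfrac{\partial}{\partial X}u = \tfrac{\partial}{\partial X}v = 1$, $\tfrac{\partial}{\partial Y}u = -z$ and $\tfrac{\partial}{\partial Y}v = -\bar z$, a direct calculation gives
\begin{equation*}
  \mathcal{D}\big(u^{2m}\otimes u^c v^d\big) = 2m\, d\,(z-\bar z)\; u^{2m-1}\otimes u^c v^{d-1}.
\end{equation*}
The point is that the $u$-derivative of the right factor contributes nothing: its coefficient is $-cz + zc = 0$, so the would-be $u^{c-1}v^d$ term cancels, leaving only the $v$-derivative together with the factor $z-\bar z$. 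Thus each application of $\mathcal{D}$ lowers the $u$-power on the left by one and the $v$-power on the right by one. This telescoping is the key structural fact, and it is precisely why the $(X-zY),\,(X-\bar z Y)$ basis is used rather than the monomial basis $X^rY^s$.

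Iterating $k$ times, the left factor produces the falling product $(2m)!/(2m-k)!$, the right factor produces $d!/(d-k)!$, and we collect $(z-\bar z)^k$, so that
\begin{equation*}
  \mathcal{D}^k\big(u^{2m}\otimes u^c v^d\big) = (z-\bar z)^k \frac{(2m)!}{(2m-k)!}\frac{d!}{(d-k)!}\; u^{2m-k}\otimes u^c v^{d-k}.
\end{equation*}
Applying the multiplication map and dividing by $(k!)^2$ yields the monomial $u^{2m-k+c}v^{d-k}$; substituting $r = 2m-k+c$ and $s = d-k$ (equivalently $c = r-2m+k$, $d = s+k$) and recognising $\tfrac{1}{(k!)^2}\tfrac{(2m)!}{(2m-k)!}\tfrac{(s+k)!}{s!} = \binom{2m}{k}\binom{s+k}{k}$ gives exactly the claimed expression for $F_{r,s}$, with the total degree $r+s = 2m+2n-2k$ falling out automatically.

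Finally, both vanishing statements are read off the same formula: the factor $\binom{2m}{k}$ is zero whenever $k>2m$, and a nonzero contribution requires an index $d$ with $0\le d\le 2n$ and $d\ge k$, which is impossible when $k>2n$. No step is genuinely difficult; the only place to be careful is the cancellation of the $u$-lowering cross terms in the computation of $\mathcal{D}$, which is what makes the operator act diagonally and the iteration clean.
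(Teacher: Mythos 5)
Your proof is correct. The paper itself states this result without proof, citing it directly as Lemma 7.3 of \cite{brown1}, so there is no in-paper argument to compare against; your direct computation is the natural one and matches the standard verification. The key cancellation checks out: with $u=X-zY$, $v=X-\bar zY$, one has
\begin{equation*}
\mathcal{D}\big(u^{2m}\otimes u^c v^d\big)
= 2m\,u^{2m-1}\otimes\big((-cz+cz)u^{c-1}v^d+(z-\bar z)d\,u^cv^{d-1}\big)
= 2m\,d\,(z-\bar z)\,u^{2m-1}\otimes u^cv^{d-1},
\end{equation*}
and the iteration, the identification $\tfrac{1}{(k!)^2}\tfrac{(2m)!}{(2m-k)!}\tfrac{(s+k)!}{s!}=\binom{2m}{k}\binom{s+k}{k}$, and the two vanishing cases (read off from the zero factor in the falling products when $k>2m$ or $k>d$ for all admissible $d\le 2n$) all follow as you say.
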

Now, for $a,b \geq 1$ and $0 \leq k \leq \text{min}\{2a,2b\}$, we define 
\begin{equation*}
    K^{(k)}_{2a+2,2b+2}(z) = (\pi i)^k \frac{\delta^k}{(k!)^2} K_{2a+2,2b+2}(z)
\end{equation*}
and observe that
\begin{align*}
    \dfrac{\partial}{\partial z}K^{(k)}_{2a+2,2b+2}(z) &= \dfrac{\partial}{\partial z} \left( (\pi i)^k \frac{\delta^k}{(k!)^2} K_{2a+2,2b+2}(z)   \right) \\
    &= (\pi i)^k \frac{\delta^k}{(k!)^2}  \left(\dfrac{\partial}{\partial z}  K_{2a+2,2b+2}(z)   \right).
\end{align*}
Then, using equation \eqref{dzK} and Lemma \ref{dk}, this becomes
\begin{equation}\label{dK2}
    \dfrac{\partial}{\partial z}K^{(k)}_{2a+2,2b+2}(z) \,= \, \pi i J^{(k)}(z)
\end{equation}
where
\begin{equation}\label{dK3}
   J^{(k)}_{r,s}(z) =  \mathbb{L}^k \mathbb{G}_{2a+2}(z) \binom{2a}{k}\binom{k+s}{k}  \mathcal{E}_{r-2a+k,s+k}(z).
\end{equation}

\subsection{Equivariant Versions of Double Eisenstein Integrals}\label{EVTEI}
The function $K^{(k)}_{2a+2,2b+2}$ is not quite modular equivariant. (We want an equivariant function in order to make use of Lemma \ref{7.1} during the proof of Theorem \ref{MI3THM}. This is covered in more detail in Section \ref{length3}, specifically the proof of Theorem \ref{3MI3THM}). Another way to view the equation of modular equivariance \eqref{modeq} is
\begin{equation}
    f(\gamma(z))|_\gamma -f(z) = 0, \qquad \forall \ \gamma \in \Gamma_1.
\end{equation}
We can also deduce from equations \eqref{dzK} and \eqref{dzbar} that $dK_{2a,2b}$ is modular equivariant. It follows that, $\forall \ \gamma  \in \Gamma_1$, 
\begin{align*}
    K_{2a,2b}(\gamma z)|_{\gamma}-K_{2a,2b}(z) =c_{\gamma},
\end{align*}
for some ``constant'' $c_{\gamma}$ (does not depend on $z$) which defines a coboundary and hence a cocycle. Combining this with the fact that $\delta^k$ is an equivariant projector, we conclude that
\begin{align*}
K^{(k)}_{2a,2b}(\gamma z)|_{\gamma}-K^{(k)}_{2a,2b}(z)=C_{\gamma},
\end{align*}
for a ``constant $C_{\gamma}$'' (does not depend on $z$) which again defines a cocycle:
\begin{equation*}
    \gamma \mapsto C_{\gamma} \in Z^1(\Gamma_1,V_{2a+2b-4-2k} \otimes \mathbb{C}).
\end{equation*}

By the Eichler-Shimura theorem, such a cocycle can be written as a linear combination of cocycles of cusp forms, the complex conjugate of said forms, Eisenstein series and a coboundary $c|_{\gamma - \text{id}}$ for a $c \in V_{2a+2b-4-2k}\otimes \mathbb{C}$. We then define a function $M^{(k)}_{2a,2b}$ such that we have modular equivariance and 
\begin{equation}\label{MM}
    M^{(k)}_{2a,2b} = K^{(k)}_{2a,2b} - c - \dfrac{1}{2} \int^{\reg}_{z} (\underline{f} + \overline{\underline{g}}),
\end{equation}
where $f$ (resp. $g$) is a standard modular form (resp. modular cusp form) of weight $2a+2b-2-2k$. The equation above uniquely determines $M^{(k)}_{2a,2b}, c, f$ and $g$ except for when $2a+2b-4-2k=0$, as we could add an arbitrary constant $d \in \mathbb{C}$.

\begin{theorem}\label{MI3THM}(Theorem 9.3 of \cite{brown1})
Let $a,b \geq 1$, $0\leq k \leq \text{min}\{2a,2b  \}$ and set $w=a+b-k$. There exists a family of elements $(F^{(k)}_{2a+2,2b+2})_{r,s} \in \mathcal{MI}_2 \cap \mathcal{M}_{r,s} $ of total modular weight $2w = r+s$, with $r,s \geq 0$, which satisfy the following:
\begin{align}
     a)
     \ \,  & \partial (F^{(k)}_{2a+2,2b+2})_{2w,0} = \binom{2a}{k} \mathbb{L}^{k+1}\mathbb{G}_{2a+2}\mathcal{E}_{2b-k,k} + \mathbb{L}f, \label{MI3THM1} \\[1mm]
& \partial (F^{(k)}_{2a+2,2b+2})_{r,s} - (r+1)(F^{(k)}_{2a+2,2b+2})_{r+1,s-1} \label{MI3THM2} \\ & \hspace{34mm}= \binom{2a}{k}\binom{k+s}{k} \mathbb{L}^{k+1}\mathbb{G}_{2a+2}\mathcal{E}_{2b-k-s,k+s}   , \tag*{\quad if $s \geq 1$,} \nonumber
\end{align}
where $f$ is a unique cusp form of weight $2w+2$.
\begin{align}
      b) \ \, & \overline{\partial} (F^{(k)}_{2a+2,2b+2})_{0,2w} = \binom{2b}{k} \mathbb{L}^{k+1}\overline{\mathbb{G}}_{2b+2}\mathcal{E}_{k,2a-k}
+ \mathbb{L}\overline{g}, \label{thmk11}  \\[1mm]
   & \overline{\partial} (F^{(k)}_{2a+2,2b+2})_{r,s} - (s+1)(F^{(k)}_{2a+2,2b+2})_{r-1,s+1}  \label{thmk12} \\ & \hspace{34mm}= \binom{2b}{k}\binom{k+r}{k} \mathbb{L}^{k+1}\overline{\mathbb{G}}_{2b+2}\mathcal{E}_{k+r,2a-k-r}, \nonumber\tag*{\quad if  $r \geq 1$,}
\end{align}
where $g$ is a unique modular form of weight $2w+2$.
\end{theorem}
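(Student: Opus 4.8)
The plan is to realise the family $(F^{(k)}_{2a+2,2b+2})_{r,s}$ as the $(X-zY)^r(X-\bar{z}Y)^s$-components of a single $V_{2w}$-valued, modular equivariant function built from the double Eisenstein integral $K^{(k)}_{2a+2,2b+2}$, so that Lemmas \ref{7.1}, \ref{dF=A} and \ref{thmFrs} can be applied to it in turn.

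First I would take the differential identity already established in \eqref{dK2}--\eqref{dK3}, namely $\frac{\partial}{\partial z}K^{(k)}_{2a+2,2b+2} = \pi i J^{(k)}$ with $J^{(k)}_{r,s} = \mathbb{L}^k\mathbb{G}_{2a+2}\binom{2a}{k}\binom{k+s}{k}\mathcal{E}_{r-2a+k,s+k}$, together with its conjugate coming from \eqref{dzbar}. Feeding the first into the $\frac{\partial}{\partial z}$-half of Lemma \ref{dF=A} (with $2n=2w$) turns it into the component system $\partial K^{(k)}_{2w,0}=\mathbb{L}J^{(k)}_{2w,0}$ and $\partial K^{(k)}_{r,s}-(r+1)K^{(k)}_{r+1,s-1}=\mathbb{L}J^{(k)}_{r,s}$ for $s\ge 1$; substituting $J^{(k)}$ and simplifying $2w-2a+k=2b-k$ reproduces the right-hand sides of \eqref{MI3THM1}--\eqref{MI3THM2} up to the $\mathbb{L}f$ term. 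The conjugate computation analogously produces the right-hand sides of \eqref{thmk11}--\eqref{thmk12} up to $\mathbb{L}\overline{g}$.

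The main obstacle is that $K^{(k)}_{2a+2,2b+2}$ is only equivariant modulo the cocycle $C_\gamma\in Z^1(\Gamma_1,V_{2w-4}\otimes\mathbb{C})$ recorded in Section \ref{EVTEI}, so its components are not yet modular forms and Lemma \ref{7.1} cannot be applied directly. To remove it I would appeal to the Eichler--Shimura theorem to express $C_\gamma$ through Eisenstein and cusp-form cocycles plus a coboundary, and then define the equivariant function $M^{(k)}$ as in \eqref{MM} by subtracting a constant $c$ together with the regularised integrals $\tfrac{1}{2}\int_z^{\reg}(\underline{f}+\overline{\underline{g}})$ of a holomorphic form $f$ and an antiholomorphic $\overline{\underline{g}}$ of weight $2w+2$, the data $(c,f,g)$ being uniquely pinned down outside the degenerate case $2w-4=0$. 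The delicate point is to confirm that this correction alters the differential equations only by the advertised inhomogeneous terms: since $\frac{\partial}{\partial z}\big(-\tfrac{1}{2}\int_z^{\reg}\underline{f}\big)=\pi i\, f\,(X-zY)^{2w}$ feeds only into the top ($s=0$) component, while $\overline{\underline{g}}$ and the constant $c$ are annihilated by $\frac{\partial}{\partial z}$, re-running Lemma \ref{dF=A} on $M^{(k)}$ adds exactly $\mathbb{L}f$ to \eqref{MI3THM1} and leaves \eqref{MI3THM2} untouched; the conjugate argument yields part b) with the $\mathbb{L}\overline{g}$ correction.

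Finally I would set $(F^{(k)}_{2a+2,2b+2})_{r,s}$ equal to the components of the now-equivariant $M^{(k)}$; Lemma \ref{7.1} then guarantees each is modular of weights $(r,s)$, and one reads off from the iterated-integral and regularisation structure that they admit a Fourier expansion of the form \eqref{f=aqq}, placing them in $\mathcal{M}_{r,s}$. Membership in $\mathcal{MI}_2$ follows immediately from Lemma \ref{thmFrs} applied with $n=2$: each right-hand side in \eqref{MI3THM1}--\eqref{thmk12} is a product of an element of $M[\mathbb{L}]$ (a power of $\mathbb{L}$ times $\mathbb{G}_{2a+2}$, or the holomorphic form $f$) with an element of $\mathcal{MI}_1$ (one of the series $\mathcal{E}_{r,s}$, or the constant $1\in\mathcal{MI}_0$), hence lies in $M[\mathbb{L}]\times\mathcal{MI}_1$, and symmetrically in $\overline{M}[\mathbb{L}]\times\mathcal{MI}_1$, verifying hypotheses \eqref{Frs1}--\eqref{Frs4}.
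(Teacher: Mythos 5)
Your proposal is correct and follows essentially the same route as the paper (and as the paper's own proof of the length-three analogue, Theorem \ref{3MI3THM}): pass from the differential equations \eqref{dK2}--\eqref{dK3} for $K^{(k)}_{2a+2,2b+2}$ to the equivariant correction $M^{(k)}_{2a+2,2b+2}$ via Eichler--Shimura as in \eqref{MM}, extract components with Lemma \ref{7.1}, translate the $\partial/\partial z$ and $\partial/\partial\bar z$ equations into the component systems with Lemma \ref{dF=A}, and conclude membership in $\mathcal{MI}_2$ with Lemma \ref{thmFrs}. The one detail you elide is that \eqref{MM} delivers $f$ as a general modular form (with $g$ the cusp form), whereas the theorem asserts $f$ is a cusp form; as in the paper's length-three proof, one must further modify $M^{(k)}$ by a suitable multiple of the family $\mathcal{E}_{2w}$, using $\partial\mathcal{E}_{2w,0}=\mathbb{L}\mathbb{G}_{2w+2}$, to absorb the Eisenstein component of $f$ (at the cost of no longer guaranteeing that $g$ is cuspidal).
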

We note that, in the above theorem, $f$ is a cusp form and $g$ is a modular form but in equation \eqref{MM} it is the opposite. In this theorem, we actually have the freedom to chose either $f$ or $g$ to be the guaranteed cusp form (but not both) and it is the author's personal preference to choose $f$. The reason we can make such a choice is given in the proof of Theorem 9.3 of \cite{brown1}.

The function 
\begin{equation}\label{Fkab}
    F^{(k)}_{2a+2,2b+2}(z) := \sum_{\mathclap{\substack{r+s= \\
    2a+2b-2k}}} \, \big(F^{(k)}_{2a+2,2b+2}(z)\big)_{r,s}(X-zY)^r(X-\bar{z}Y)^s
\end{equation}
is modular equivariant and satisfies the following differential equations:
\begin{align}
   & \dfrac{\partial }{\partial z} F^{(k)}_{2a+2,2b+2}(z)= (i \pi)^{k}\dfrac{\delta^{(k)}}{(k!)^2}\Big(\dfrac{1}{2}E_{2a+2}(z) \otimes \mathcal{E}_{2b}(z)\Big) \label{dFdz} \\
   & \hspace{56mm} + \pi i f^{(k)}_{2a+2,2b+2}(z)(X-zY)^{2a+2b-2k}, \nonumber
\\[2mm]
   &  \dfrac{\partial }{\partial \bar{z}} F^{(k)}_{2a+2,2b+2}(z) = (i \pi)^{k}\dfrac{\delta^{(k)}}{(k!)^2}\Big(\mathcal{E}_{2a}(z) \otimes \dfrac{1}{2}\overline{E}_{2b+2}(z) \Big)  \label{dFdz2} \\
   & \hspace{56mm} - \pi i \bar{g}^{(k)}_{2a+2,2b+2}(z)(X-\bar{z}Y)^{2a+2b-2k}, \nonumber
\end{align}
where $f^{(k)}_{2a+2,2b+2}$ and $\bar{g}^{(k)}_{2a+2,2b+2}$ are the  modular forms determined by Theorem \ref{MI3THM} and are of weight $2a+2b-2k+2$. It is well known that there exist no cusp forms of weight $\leq 10$ or weight $14$.
Therefore, equation \eqref{dFdz} does not include the summand containing $f$ when $2a+2b-2k \leq 8$ or $2a+2b-2k = 12$. Furthermore, under these conditions, we can write $\overline{g}^{(k)}_{2a+2,2b+2}$ as $C^{(k)}_{2a+2,2b+2}\overline{\mathbb{G}}_{2w+2}$, for some constant $C^{(k)}_{2a+2,2b+2}$.


%

These differential equations will be crucial in Section \ref{length3} when we construct an analogous theorem to Theorem \ref{MI3THM}, but for functions in $\mathcal{MI}_3$ rather than $\mathcal{MI}_2$.

\subsection{Length Two Examples}\label{examples}
\subsubsection{Example 1}\label{ex1}

The simplest example is when $2a=2b=2$, first given by Brown in \cite{brown1}. Since $2a+2b-2k \leq 4$, for $k \in \{0,1,2\}$, the cusp form given in equation \eqref{MI3THM1} vanishes. We are also given, in \cite{brown1}, that each $\overline{g}^{(k)}_{4,4}$ vanishes.

When $k=0$, we have the following set of equations from Theorem \ref{MI3THM}:
\begin{align*}
&\partial (F^{(0)}_{4,4})_{4,0}=\mathbb{L}\mathbb{G}_4 \mathcal{E}_{2,0}
\hspace{10mm} 
&&\overline{\partial} (F^{(0)}_{4,4})_{0,4} =  \mathbb{L}\overline{\mathbb{G}}_4 \mathcal{E}_{0,2}
\\[1mm]
&\partial (F^{(0)}_{4,4})_{3,1} - 4(F^{(0)}_{4,4})_{4,0}=\mathbb{L}\mathbb{G}_4 \mathcal{E}_{1,1}
\hspace{10mm}  
&&\overline{\partial}(F^{(0)}_{4,4})_{1,3} - 4(F^{(0)}_{4,4})_{0,4} = \mathbb{L}\overline{\mathbb{G}}_4 \mathcal{E}_{1,1}
\\[1mm]
&\partial (F^{(0)}_{4,4})_{2,2} - 3(F^{(0)}_{4,4})_{3,1}=\mathbb{L}\mathbb{G}_4 \mathcal{E}_{0,2}
\hspace{10mm}  
&&\overline{\partial} (F^{(0)}_{4,4})_{2,2} - 3(F^{(0)}_{4,4})_{1,3}= \mathbb{L}\overline{\mathbb{G}}_4 \mathcal{E}_{2,0}
\\[1mm]
& \partial (F^{(0)}_{4,4})_{1,3} - 2(F^{(0)}_{4,4})_{2,2}=0 
\hspace{10mm}  
&&\overline{\partial} (F^{(0)}_{4,4})_{3,1} - 2(F^{(0)}_{4,4})_{2,2}=0
\\[1mm]
& \partial (F^{(0)}_{4,4})_{0,4} - (F^{(0)}_{4,4})_{1,3}=0   
\hspace{10mm}  
&&\overline{\partial} (F^{(0)}_{4,4})_{4,0}-(F^{(0)}_{4,4})_{3,1}=0.
\end{align*} 

When k=1:
\begin{align*}
&\partial (F^{(1)}_{4,4})_{2,0}=2\mathbb{L}^2\mathbb{G}_4 \mathcal{E}_{1,1} 
\hspace{10mm} 
&& \overline{\partial} (F^{(1)}_{4,4})_{0,2} = 2\mathbb{L}^2\overline{\mathbb{G}}_4\mathcal{E}_{1,1}
\\[1mm]
&\partial (F^{(1)}_{4,4})_{1,1} - 2(F^{(1)}_{4,4})_{2,0}=4\mathbb{L}^2\mathbb{G}_4 \mathcal{E}_{0,2}  
\hspace{10mm} 
&& \overline{\partial} (F^{(1)}_{4,4})_{1,1} - 2(F^{(1)}_{4,4})_{0,2} = 4\mathbb{L}^2\overline{\mathbb{G}}_4\mathcal{E}_{2,0}
\\[1mm]
&\partial (F^{(1)}_{4,4})_{0,2} - (F^{(1)}_{4,4})_{1,1}= 0 
\hspace{10mm} 
&& \overline{\partial} (F^{(1)}_{4,4})_{2,0} - (F^{(1)}_{4,4})_{1,1} = 0.
\end{align*} 

Finally, when $k=2$ we have two equations:
\begin{align*}
    &\partial (F^{(2)}_{4,4})_{0,0} = \mathbb{L}^3 \mathbb{G}_4 \mathcal{E}_{0,2} \hspace{20mm}
    & \overline{\partial} (F^{(2)}_{4,4})_{0,0} = \mathbb{L}^3 \overline{\mathbb{G}}_4 \mathcal{E}_{2,0}.
    \end{align*}
For $k=0$, the equations above can be solved in terms of real analytic Eisenstein series:
\begin{align*}
&(F^{(0)}_{4,4})_{4,0} =\dfrac{1}{2}\mathcal{E}_{2,0}\mathcal{E}_{2,0}
\\
&(F^{(0)}_{4,4})_{3,1} =\mathcal{E}_{2,0}\mathcal{E}_{1,1}
&&(F^{(0)}_{4,4})_{1,3} =\mathcal{E}_{1,1}\mathcal{E}_{0,2}
\\
&(F^{(0)}_{4,4})_{2,2} =\mathcal{E}_{2,0}\mathcal{E}_{0,2}+ \dfrac{1}{2}\mathcal{E}_{1,1}\mathcal{E}_{1,1}
&& (F^{(0)}_{4,4})_{0,4} = \dfrac{1}{2}\mathcal{E}_{0,2}\mathcal{E}_{0,2}.
\end{align*} 
This is also true for $k=2$:
\begin{align*}
    (F^{(2)}_{4,4})_{0,0} = \mathbb{L}^2\mathcal{E}_{2,0}\mathcal{E}_{0,2} - \dfrac{1}{4}\mathbb{L}^2\mathcal{E}_{1,1}\mathcal{E}_{1,1}.
\end{align*}
However, we cannot do the same for the functions $(F^{(1)}_{4,4})_{r,s}$, we consider these functions to be ``new''.
For each of the functions $(F^{(k)}_{4,4})_{r,s}$ above, we have an associated Laplace-eigenvalue equation:
\begin{align*}
& (\Delta +4)(F^{(0)}_{4,4})_{4,0}= - \mathbb{L}\mathbb{G}_4 \mathcal{E}_{1,1} \hspace{12mm}
&&  (\Delta+2)(F^{(1)}_{4,4})_{2,0} = -4\mathbb{L}^2\mathbb{G}_4\mathcal{E}_{0,2}
\\
& (\Delta +4)(F^{(0)}_{4,4})_{3,1}= -2 \mathbb{L}\mathbb{G}_4 \mathcal{E}_{0,2}
&& (\Delta+2)(F^{(1)}_{4,4})_{1,1} = -4\mathbb{L}^3\mathbb{G}_4\overline{\mathbb{G}}_4 
\\
& (\Delta +4)(F^{(0)}_{4,4})_{2,2}= - \mathbb{L}^2\mathbb{G}_4 \overline{\mathbb{G}}_4
&& (\Delta+2)(F^{(1)}_{4,4})_{0,2} = -4\mathbb{L}^2\overline{\mathbb{G}}_4\mathcal{E}_{2,0}
\\
& (\Delta +4)(F^{(0)}_{4,4})_{1,3}= -2 \mathbb{L}\overline{\mathbb{G}}_4 \mathcal{E}_{2,0}
\\
& (\Delta +4)(F^{(0)}_{4,4})_{0,4}= - \mathbb{L}\overline{\mathbb{G}}_4 \mathcal{E}_{1,1}
&& \Delta (F^{(2)}_{4,4})_{0,0}= - \mathbb{L}^4\mathbb{G}_4\overline{\mathbb{G}}_4.
\end{align*} 

One use of these ``new'' functions, given in \cite{brown1}, is that they should allow us to give new representations of modular graph functions. To see this, we first observe the following:
\begin{align*}
    &(\Delta + 2)(\mathbb{L}^2\mathcal{E}_{2,0}\mathcal{E}_{0,2}) = - \mathbb{L}^4\mathbb{G}_4\overline{\mathbb{G}}_4- \mathbb{L}^2\mathcal{E}_{1,1}\mathcal{E}_{1,1}, \\[1mm]
    &(\Delta+2)\mathbb{L}^3\mathcal{E}_{3,3}=-10\mathbb{L}^3\mathcal{E}_{3,3}.
\end{align*}
Therefore, we have
\begin{equation*}
    \big(\Delta + 2\big)\Big(4\mathbb{L}(F^{(1)}_{4,4})_{1,1} -16\mathbb{L}^2\mathcal{E}_{2,0}\mathcal{E}_{0,2} + \dfrac{1}{25}\mathbb{L}^3\mathcal{E}_{3,3} \Big) = 16\mathbb{L}^2\mathcal{E}_{1,1}\mathcal{E}_{1,1}- \dfrac{2}{5}\mathbb{L}^3\mathcal{E}_{3,3}
\end{equation*}
and we conclude that the modular graph function $C_{2,1,1}$, which satisfies 
\begin{equation}\label{c211}
    (\Delta + 2)C_{2,1,1} = 16\mathbb{L}^2\mathcal{E}_{1,1}\mathcal{E}_{1,1}- \dfrac{2}{5}\mathbb{L}^3\mathcal{E}_{3,3},
\end{equation}
can be expressed in terms of $\mathbb{L}(F^{(1)}_{4,4})_{1,1}, \mathbb{L}^2\mathcal{E}_{2,0}\mathcal{E}_{0,2}$, $\mathbb{L}^3\mathcal{E}_{3,3}$ and a constant. The coefficients in the expansion \eqref{f=aqq} of $(F^{(1)}_{4,4})_{1,1}$ can be determined using properties of $\partial$, $\overline{\partial}$ and the above differential equations (see \cite{brown1} for more details).

We can use Theorem \ref{MI3THM} to produce more ``new'' functions and, in turn, to produce more Laplace-eigenvalue equations. By comparing these equations with data on modular graph functions from literature in physics, we should be able to repeat the above result for other modular graph functions. 

Of course, it will be useful to have as many Laplace-eigenvalue equations available to use as possible. Indeed, it may be useful, then, to have Laplace-eigenvalue equations for ``new'' functions that are of a higher length than two. We will deal with the length three case in Section \ref{length3}.

\subsubsection{A new description of ``new''}

In this paper we focus on when $2a+2b-2k \leq 8$ or $=12$, so that the cusp form $f^{(k)}_{2a+2,2b+2}$ vanishes and  we can write $\bar{g}^{(k)}_{2a+2,2b+2}$ as $C^{(k)}_{2a+2,2b+2}\overline{\mathbb{G}}_{2w+2}$. It would be useful to be able to determine the value of $C^{(k)}_{2a+2,2b+2}$ for specific $a$, $b$ and $k$, but the author is not currently aware of any examples of this being achieved other than the case $2a=2b=2$. 

Until we know more about this, we must be aware of the effect that $C^{(k)}_{2a+2,2b+2}  \overline{\mathbb{G}}_{2w+2}$ may have on our definition of a ``new'' function. Suppose that we did not know that $\bar{g}^{(k)}_{4,4}$ vanished in the previous example, then the $\partial, \bar{\partial}$ equations for $k=0$ would be
\begin{align*}
&\partial (F^{(0)}_{4,4})_{4,0}=\mathbb{L}\mathbb{G}_4 \mathcal{E}_{2,0}
\hspace{10mm} 
&&\overline{\partial} (F^{(0)}_{4,4})_{0,4} =  \mathbb{L}\overline{\mathbb{G}}_4 \mathcal{E}_{0,2} + C^{(0)}_{4,4}\LL\overline{\mathbb{G}}_{6}
\\
&\partial (F^{(0)}_{4,4})_{3,1} - 4(F^{(0)}_{4,4})_{4,0}=\mathbb{L}\mathbb{G}_4 \mathcal{E}_{1,1}
\hspace{10mm}  
&&\overline{\partial}(F^{(0)}_{4,4})_{1,3} - 4(F^{(0)}_{4,4})_{0,4} = \mathbb{L}\overline{\mathbb{G}}_4 \mathcal{E}_{1,1}
\\
&\partial (F^{(0)}_{4,4})_{2,2} - 3(F^{(0)}_{4,4})_{3,1}=\mathbb{L}\mathbb{G}_4 \mathcal{E}_{0,2}
\hspace{10mm}  
&&\overline{\partial} (F^{(0)}_{4,4})_{2,2} - 3(F^{(0)}_{4,4})_{1,3}= \mathbb{L}\overline{\mathbb{G}}_4 \mathcal{E}_{2,0}
\\
& \partial (F^{(0)}_{4,4})_{1,3} - 2(F^{(0)}_{4,4})_{2,2}=0 
\hspace{10mm}  
&&\overline{\partial} (F^{(0)}_{4,4})_{3,1} - 2(F^{(0)}_{4,4})_{2,2}=0
\\
& \partial (F^{(0)}_{4,4})_{0,4} - (F^{(0)}_{4,4})_{1,3}=0   
\hspace{10mm}  
&&\overline{\partial} (F^{(0)}_{4,4})_{4,0}-(F^{(0)}_{4,4})_{3,1}=0,
\end{align*} 
where $C^{(0)}_{4,4}$ is an unknown, potentially non-zero, constant. These equations, in general, would then no longer be solvable in terms of real analytic Eisenstein series, and the $(F^{(0)}_{4,4})_{r,s}$ could be considered ``new'' functions. The ability to solve these equations relied on knowing $C^{(0)}_{4,4}$ vanished. However, we note that the associated Laplace equations remain unchanged, no matter the value of $C^{(0)}_{4,4}$. Therefore, the modular form $\bar{g}^{(k)}_{2a+2,2b+2}$ can affect whether a function is considered ``new'' without affecting the Laplace equations.

In this paper, we are more interested in the Laplace equations than the $\partial, \bar{\partial}$ equations. With this in mind, we slightly redefine our definition of a ``new'' function to deal with the potential appearance of $C^{(k)}_{2a+2,2b+2} \LL \overline{\mathbb{G}}_{2w+2}$. 

Specifically, for $2a+2b+2k \leq 8$ or $2a+2b+2k=12$, we define the function $(F^{(k)}_{2a+2,2b+2})_{r,s}$ to be ``new'' if its associated $\partial, \bar{\partial}$ equations cannot be solved in terms of real analytic Eisenstein series when $C^{(k)}_{2a+2,2b+2}$ is set to zero. For example, even if we did not know $\bar{g}^{(k)}_{4,4}$ vanished in the previous example, we would still not consider the $(F^{(0)}_{4,4})_{r,s}$ functions to be ``new''. We will look at this definition in the context of another example.

\subsubsection{Example 2}

The next example we will look at is when $2a=2$ and $2b=4$. Again, since $2a+2b-2k \leq 6$, for $k \in \{0,1,2\}$, the cusp form given in equation \eqref{MI3THM1} vanishes. 

When $k=0$, we have the following set of equations:
\begin{align*}
&\partial (F^{(0)}_{4,6})_{6,0}=\mathbb{L}\mathbb{G}_4 \mathcal{E}_{4,0}
&&\overline{\partial}(F^{(0)}_{4,6})_{0,6}= \mathbb{L}\overline{\mathbb{G}}_6\mathcal{E}_{0,2}
+ C^{(0)}_{4,6}\LL\overline{\mathbb{G}}_{8}
\\
&\partial (F^{(0)}_{4,6})_{5,1} - 6(F^{(0)}_{4,6})_{6,0}=\mathbb{L}\mathbb{G}_4 \mathcal{E}_{3,1} \qquad
&& \overline{\partial}(F^{(0)}_{4,6})_{1,5} - 6(F^{(0)}_{4,6})_{0,6} = \mathbb{L}\overline{\mathbb{G}}_6\mathcal{E}_{1,1}
\\
&\partial (F^{(0)}_{4,6})_{4,2} - 5(F^{(0)}_{4,6})_{5,1}=\mathbb{L}\mathbb{G}_4 \mathcal{E}_{2,2}
&& \overline{\partial}(F^{(0)}_{4,6})_{2,4} - 5(F^{(0)}_{4,6})_{1,5} = \mathbb{L}\overline{\mathbb{G}}_6\mathcal{E}_{2,0}
\\
&\partial (F^{(0)}_{4,6})_{3,3} - 4(F^{(0)}_{4,6})_{4,2}= \mathbb{L}\mathbb{G}_4 \mathcal{E}_{1,3}
&& \overline{\partial}(F^{(0)}_{4,6})_{3,3} - 4(F^{(0)}_{4,6})_{2,4} = 0
\\
&\partial (F^{(0)}_{4,6})_{2,4} - 3(F^{(0)}_{4,6})_{3,3}= \mathbb{L}\mathbb{G}_4 \mathcal{E}_{0,4}
&& \overline{\partial}(F^{(0)}_{4,6})_{4,2} - 3(F^{(0)}_{4,6})_{3,3} = 0
\\
& \partial (F^{(0)}_{4,6})_{1,5} - 2(F^{(0)}_{4,6})_{2,4}=0 
&& \overline{\partial}(F^{(0)}_{4,6})_{5,1} - 2(F^{(0)}_{4,6})_{4,2} = 0 
\\
& \partial (F^{(0)}_{4,6})_{0,6} - (F^{(0)}_{4,6})_{1,5}=0
&& \overline{\partial}(F^{(0)}_{4,6})_{6,0} - (F^{(0)}_{4,6})_{5,1} = 0,
\end{align*} 
for some constant $C^{(0)}_{4,6}$.
When $k=1$:
\begin{align*}
&\partial (F^{(1)}_{4,6})_{4,0}=2\mathbb{L}^2\mathbb{G}_4 \mathcal{E}_{3,1} 
&& \overline{\partial}(F^{(1)}_{4,6})_{0,4}= 4\mathbb{L}^2\overline{\mathbb{G}}_6\mathcal{E}_{1,1} + C^{(1)}_{4,6}\LL\overline{\mathbb{G}}_{6}
\\
&\partial (F^{(1)}_{4,6})_{3,1} - 4(F^{(1)}_{4,6})_{4,0}=4\mathbb{L}^2\mathbb{G}_4 \mathcal{E}_{2,2} \quad
&& \overline{\partial}(F^{(1)}_{4,6})_{1,3} - 4(F^{(1)}_{4,6})_{0,4}= 8\mathbb{L}^2\overline{\mathbb{G}}_6\mathcal{E}_{2,0}
\\
&\partial (F^{(1)}_{4,6})_{2,2} - 3(F^{(1)}_{4,6})_{3,1}= 6\mathbb{L}^2\mathbb{G}_4 \mathcal{E}_{1,3} 
&& \overline{\partial}(F^{(1)}_{4,6})_{2,2} - 3(F^{(1)}_{4,6})_{1,3}=0
 \\
 & \partial (F^{(1)}_{4,6})_{1,3} - 2(F^{(1)}_{4,6})_{2,2}= 8\mathbb{L}^2\mathbb{G}_4 \mathcal{E}_{0,4}  
 && \overline{\partial}(F^{(1)}_{4,6})_{3,1} - 2(F^{(1)}_{4,6})_{2,2}=0
 \\
 &  \partial (F^{(1)}_{4,6})_{0,4} - (F^{(1)}_{4,6})_{1,3}=0 
 && \overline{\partial}(F^{(1)}_{4,6})_{4,0} - (F^{(1)}_{4,6})_{3,1}=0,
\end{align*} 

for some constant $C^{(1)}_{4,6}$. When $k=2$:
\begin{align*}
&\partial (F^{(2)}_{4,6})_{2,0}=\mathbb{L}^3\mathbb{G}_4 \mathcal{E}_{2,2} 
&&\overline{\partial}(F^{(2)}_{4,6})_{0,2}= 6\mathbb{L}^3\overline{\mathbb{G}}_6\mathcal{E}_{2,0} + C^{(2)}_{4,6}\LL\overline{\mathbb{G}}_{4}
\\
&\partial (F^{(2)}_{4,6})_{1,1} - 2(F^{(2)}_{4,6})_{2,0}=3\mathbb{L}^3\mathbb{G}_4 \mathcal{E}_{1,3}
&&\overline{\partial}(F^{(2)}_{4,6})_{1,1} - 2(F^{(2)}_{4,6})_{0,2}= 0
\\
&\partial (F^{(2)}_{4,6})_{0,2} - (F^{(2)}_{4,6})_{1,1}=6\mathbb{L}^3\mathbb{G}_4 \mathcal{E}_{0,4}
&&\overline{\partial}(F^{(2)}_{4,6})_{2,0} - (F^{(2)}_{4,6})_{1,1}= 0,
\end{align*} 
for some constant $C^{(2)}_{4,6}$.
In this example, even when $C^{(0)}_{4,6}$,  $C^{(1)}_{4,6}$ and $C^{(2)}_{4,6}$ are set to zero, none of the above equations can be solved in terms of real analytic Eisenstein series.  These functions are all ``new''. The associated Laplace-eigenvalue equations for these functions are:
\begin{align*}
    &(\Delta+6)(F^{(0)}_{4,6})_{6,0} =  -\mathbb{L}\mathbb{G}_4\mathcal{E}_{3,1}
    && (\Delta+4)(F^{(1)}_{4,6})_{4,0} =  -4\mathbb{L}^2\mathbb{G}_4\mathcal{E}_{2,2}
    \\
    &(\Delta+6)(F^{(0)}_{4,6})_{5,1}= -2\mathbb{L}\mathbb{G}_4\mathcal{E}_{2,2}\hspace{8mm}   \hspace{10mm}
    \
    && (\Delta+4)(F^{(1)}_{4,6})_{3,1}=  -12\mathbb{L}^2\mathbb{G}_4\mathcal{E}_{1,3}
    \\
    &(\Delta+6)(F^{(0)}_{4,6})_{4,2}= -3\mathbb{L}\mathbb{G}_4\mathcal{E}_{1,3}   \hspace{10mm}
    && (\Delta+4)(F^{(1)}_{4,6})_{2,2}= -24\mathbb{L}^2\mathbb{G}_4\mathcal{E}_{0,4}
    \\
    &(\Delta+6)(F^{(0)}_{4,6})_{3,3}= -4\mathbb{L}\mathbb{G}_4\mathcal{E}_{0,4}   \hspace{10mm}
    && (\Delta+4)(F^{(1)}_{4,6})_{1,3}= -8\mathbb{L}^3\mathbb{G}_4\overline{\mathbb{G}}_6
    \\
    & (\Delta+6)(F^{(0)}_{4,6})_{2,4}= - \mathbb{L}^2\mathbb{G}_4\overline{\mathbb{G}}_6
    && (\Delta+4)(F^{(1)}_{4,6})_{0,4}= -8\mathbb{L}^2\overline{\mathbb{G}}_6\mathcal{E}_{2,0}
    \\
    & (\Delta+6)(F^{(0)}_{4,6})_{1,5}= -2\mathbb{L}\overline{\mathbb{G}}_6\mathcal{E}_{2,0}
    &&   (\Delta+2)(F^{(2)}_{4,6})_{2,0} = -3\mathbb{L}^3\mathbb{G}_4\mathcal{E}_{1,3}
    \\
    & (\Delta+6)(F^{(0)}_{4,6})_{0,6}= -\mathbb{L}\overline{\mathbb{G}}_6\mathcal{E}_{1,1}
    && (\Delta+2)(F^{(2)}_{4,6})_{1,1}= -12\mathbb{L}^3\mathbb{G}_4\mathcal{E}_{0,4}
    \\
    &\hspace{10mm} && 
    (\Delta+2)(F^{(2)}_{4,6})_{0,2}= -6\mathbb{L}^4\mathbb{G}_4\overline{\mathbb{G}}_6.
\end{align*}

From Section \ref{sectmgf}, we have the equation
\begin{equation}\label{c311}
    (\Delta + 6)C_{3,1,1} + 3C_{2,2,1} = \dfrac{32}{3}\mathbb{L}^3\mathcal{E}_{1,1}\mathcal{E}_{2,2} - \dfrac{8}{315} \mathbb{L}^4\mathcal{E}_{4,4}.
\end{equation}
We will now show that we can express the RHS of this equation in terms of a Laplace equation for length two modular iterated integrals. (By equation \eqref{c221}, we know that $C_{2,2,1}$ can be expressed in terms of a real analytic Eisenstein series and a constant.) We first observe the following two equations:
\begin{align*}
    & (\Delta+6)(\mathbb{L}^3\mathcal{E}_{2,0}\mathcal{E}_{1,3}) = - 4 \mathbb{L}^4\mathbb{G}_4\mathcal{E}_{0,4} -  2\LL^3\mathcal{E}_{1,1}\mathcal{E}_{2,2}, \\[1mm]
    & (\Delta+6) \mathbb{L}^4\mathcal{E}_{4,4} = -14\mathbb{L}^4\mathcal{E}_{4,4}.
\end{align*}
Next, in Lemma 2.12 of \cite{brown1} we are given the following equation:
\begin{equation*}
    \Delta \mathbb{L} f = \mathbb{L} \Delta f - (p + q)\mathbb{L} f,
\end{equation*}
where $f$ is a real analytic modular form of weights $(p+1, q+1)$. We can combine this with the above Laplace equation $(\Delta+4)(F^{(1)}_{4,6})_{2,2} = -24\mathbb{L}^2\mathbb{G}_4\mathcal{E}_{0,4}$ to obtain
\begin{equation*}
    (\Delta+6)\mathbb{L}^2(F^{(1)}_{4,6})_{2,2} = -24\mathbb{L}^4\mathbb{G}_4\mathcal{E}_{0,4}.
\end{equation*}
Therefore, we have
\begin{align*}
    (\Delta+6)\left( \dfrac{4}{2205}\mathbb{L}^4\mathcal{E}_{4,4} - \dfrac{16}{3}\mathbb{L}^3\mathcal{E}_{2,0}\mathcal{E}_{1,3} + \dfrac{8}{9}\mathbb{L}^2(F^{(1)}_{4,6})_{2,2}    \right)& \\ = \dfrac{32}{3}\mathbb{L}^3\mathcal{E}_{1,1}\mathcal{E}_{2,2}& - \dfrac{8}{315} \mathbb{L}^4\mathcal{E}_{4,4}
\end{align*}
and we conclude that
\begin{align*}
    (\Delta\!+\!6)\!\left( \dfrac{4}{2205}\mathbb{L}^4\mathcal{E}_{4,4} - \dfrac{16}{3}\mathbb{L}^3\mathcal{E}_{2,0}\mathcal{E}_{1,3} + \dfrac{8}{9}\mathbb{L}^2(F^{(1)}_{4,6})_{2,2}   \! \right) \! =  (\Delta\! + \!6)C_{3,1,1} + 3C_{2,2,1}.
\end{align*}

In \cite{OLIAXEL}, the authors define modular invariant functions $F^{+(s)}_{m,n}$ and $F^{-(s)}_{m,n}$ for certain positive integers $m$, $n$ and $s$ (described below). These functions are extremely interesting for a variety of reasons. For example, not only do they determine all two-loop modular graph functions $C_{a,b,c}$ (and more), they actually transcend the space of modular graph functions itself. They are also expected to be very closely related to the functions $(F^{(k)}_{2a+2,2b+2})_{r,s}$. Each function is equipped with a Laplace-eigenvalue equation. For $F^{+(s)}_{m,n}$, we have
\begin{equation}\label{Fplus}
    (\Delta + s(s-1))F^{+(s)}_{m,n} = \dfrac{-2^{m+n+2}\mathbb{L}^{m+n-2}}{(2m-2)!(2n-2)!} \mathcal{E}_{m-1,m-1}\mathcal{E}_{n-1,n-1},
\end{equation}
where $2 \leq m \leq n$ and $s \in \{n-m+2, n-m+4, \dots , n+m-4, n+m-2  \}$. For $F^{-(s)}_{m,n}$, we have
\begin{equation*}
     (\Delta + s(s-1))F^{-(s)}_{m,n}  = \dfrac{(mn)2^{m+n+1}\mathbb{L}^{m+n-2}}{(2m-2)!(2n-2)!} \left( \mathcal{E}_{m-2,m}\mathcal{E}_{n,n-2} -\mathcal{E}_{m,m-2}\mathcal{E}_{n-2,n}  \right),
\end{equation*}
where $2 \leq m \leq n$ and $s \in \{n-m+1, n-m+3, \dots , n+m-3, n+m-1  \}$. We can see that, when $m=n=2$ and $s=2$, and $m=2$, $n=3$ and $s=3$, equation \eqref{Fplus} becomes 
\begin{align*}
    (\Delta +2)F^{+(2)}_{2,2} = -16\LL^2\mathcal{E}_{1,1}\mathcal{E}_{1,1} \qquad \text{and} \qquad 
    (\Delta + 6)F^{+(3)}_{2,3} = -\dfrac{8}{3}\LL^3\mathcal{E}_{1,1}\mathcal{E}_{2,2}.
\end{align*}
These can be viewed as simpler versions of the Laplace equations given by \eqref{c211} and \eqref{c311}, which we have already shown can be given in terms of Laplace equations for length two modular iterated integrals. Indeed, we have
\begin{align*}
    (\Delta +2)F^{+(2)}_{2,2} &= (\Delta +2)\left(  16\LL^2\mathcal{E}_{2,0}\mathcal{E}_{0,2}-4\LL (F^{(1)}_{4,4})_{1,1} \right)\!,
    \\[0.5mm]
    (\Delta + 6)F^{+(3)}_{2,3} &= (\Delta +6)\left( \dfrac{4}{3}\LL^3 \mathcal{E}_{2,0}\mathcal{E}_{1,3}-\dfrac{2}{9}\LL^2(F^{(1)}_{4,6})_{2,2}  \right)\!.
\end{align*}
Laplace equations involving the $F^{-(s)}_{m,n}$ functions have also been found to match up with Laplace equations for length two modular iterated integrals: 
\begin{align*}
    (\Delta+2)F^{-(2)}_{2,3}   &= 8\LL^3(\mathcal{E}_{0,2}\mathcal{E}_{3,1}- \mathcal{E}_{2,0}\mathcal{E}_{1,3}) \\
    &= (\Delta+2)\left( 4\LL^3\mathcal{E}_{2,0}\mathcal{E}_{1,3} - \dfrac{4}{3}\LL^3\mathcal{E}_{1,1}\mathcal{E}_{2,2} - \dfrac{4}{3}\LL(F^{(2)}_{4,6})_{1,1}  \right)\!, \\[0.5mm]
    (\Delta +12)F^{-(4)}_{2,3}  &= 8\LL^3(\mathcal{E}_{0,2}\mathcal{E}_{3,1} - \mathcal{E}_{2,0}\mathcal{E}_{1,3})  \\
    &=  (\Delta+12)  \left(\dfrac{8}{3}\LL^3(F^{(0)}_{4,6})_{3,3}  -   \dfrac{4}{3}\LL^3\mathcal{E}_{1,1}\mathcal{E}_{2,2}  -  \dfrac{8}{3}\LL^3\mathcal{E}_{2,0}\mathcal{E}_{1,3} \right)\!.
\end{align*}

\subsubsection{Example 3}

In this final example, which will be of use in Section \ref{endex3}, we set $2a=2$, $2b=6$ 
and focus on $k=0$:
\begin{align*}
&\partial (F^{(0)}_{4,8})_{8,0}= \LL\GG_4\EE_{6,0}
&& \overline{\partial} (F^{(0)}_{4,8})_{0,8}=  \LL\overline{\GG}_8\EE_{0,2} + C^{(0)}_{4,8}\LL\overline{\mathbb{G}}_{10}
\\[1mm]
&\partial (F^{(0)}_{4,8})_{7,1} - 8(F^{(0)}_{4,8})_{8,0}= \LL\GG_4\EE_{5,1} 
&&  \overline{\partial} (F^{(0)}_{4,8})_{1,7} - 8(F^{(0)}_{4,8})_{0,8} =  \LL\overline{\GG}_8\EE_{1,1}
\\[1mm]
&\partial (F^{(0)}_{4,8})_{6,2} - 7(F^{(0)}_{4,8})_{7,1}=\LL\GG_4\EE_{4,2} 
&& \overline{\partial} (F^{(0)}_{4,8})_{2,6} - 7(F^{(0)}_{4,8})_{1,7} = \LL\overline{\GG}_8\EE_{2,0}
\\[1mm]
&\partial (F^{(0)}_{4,8})_{5,3} - 6(F^{(0)}_{4,8})_{6,2}=\LL\GG_4\EE_{3,3} 
\hspace{10mm}  
&& \overline{\partial} (F^{(0)}_{4,8})_{3,5} - 6(F^{(0)}_{4,8})_{2,6} = 0
\\[1mm]
&\partial (F^{(0)}_{4,8})_{4,4} - 5(F^{(0)}_{4,8})_{5,3}=\LL\GG_4\EE_{2,4} 
&& \overline{\partial} (F^{(0)}_{4,8})_{4,4} - 5(F^{(0)}_{4,8})_{3,5} = 0
\\[1mm]
&  \partial (F^{(0)}_{4,8})_{3,5} - 4(F^{(0)}_{4,8})_{4,4}=  \LL\GG_4\EE_{1,5}
&&  \overline{\partial} (F^{(0)}_{4,8})_{5,3} - 4(F^{(0)}_{4,8})_{4,4} = 0
\\[1mm]
& \partial (F^{(0)}_{4,8})_{2,6} - 3(F^{(0)}_{4,8})_{3,5}= \LL\GG_4\EE_{0,6} 
&&  \overline{\partial} (F^{(0)}_{4,8})_{6,2} - 3(F^{(0)}_{4,8})_{5,3} = 0
\\[1mm]
& \partial (F^{(0)}_{4,8})_{1,7} - 2(F^{(0)}_{4,8})_{2,6}=0
&&  \overline{\partial} (F^{(0)}_{4,8})_{7,1} - 2(F^{(0)}_{4,8})_{6,2} = 0
\\[1mm]
&\partial (F^{(0)}_{4,8})_{0,8} - (F^{(0)}_{4,8})_{1,7}=0
&&  \overline{\partial} (F^{(0)}_{4,8})_{8,0} - (F^{(0)}_{4,8})_{7,1} = 0
\end{align*} 
\begin{align}
    &(\Delta+8)(F^{(0)}_{4,8})_{8,0} = - \LL\GG_4\EE_{5,1} \nonumber
    \\[1mm]
    &(\Delta+8)(F^{(0)}_{4,8})_{7,1} = - 2\LL\GG_4\EE_{4,2} \hspace{12mm}
    &&(\Delta+8)(F^{(0)}_{4,8})_{3,5} = - 6\LL\GG_4\EE_{0,6} \nonumber
    \\[1mm]
    &(\Delta+8)(F^{(0)}_{4,8})_{6,2} = - 3\LL\GG_4\EE_{3,3}
    &&(\Delta+8)(F^{(0)}_{4,8})_{2,6} = - \LL^2\GG_4\overline{\GG}_8 \label{ex3a}
    \\[1mm]
    &(\Delta+8)(F^{(0)}_{4,8})_{5,3} =  - 4 \LL\GG_4\EE_{2,4}
    &&(\Delta+8)(F^{(0)}_{4,8})_{1,7} = - 2 \LL\overline{\GG}_8\EE_{2,0} \label{ex3b}
    \\[1mm]
    &(\Delta+8)(F^{(0)}_{4,8})_{4,4} = - 5 \LL\GG_4\EE_{1,5}
    &&(\Delta+8)(F^{(0)}_{4,8})_{0,8} = - \LL\overline{\GG}_8\EE_{1,1}. \label{ex3c}
\end{align}
As with the previous example, even if $C^{(0)}_{4,8}$ is zero, the $\partial, \bar{\partial}$ equations cannot be solved in terms of real analytic Eisenstein series. The functions $(F^{(0)}_{4,8})_{r,s}$ are all ``new''. In fact, we get ``new'' functions for every $k$ whenever $a \neq b$. 
When $a=b$, we only get ``new'' functions for odd $k$.
Below, we will try to shed some light on why this occurs.

The form $D_{2a,2b}$ can be viewed as part of the form
\begin{align*}
    \tilde{D}_{2a,2b}:=& \ d(\mathcal{E}_{2a-2}\otimes \mathcal{E}_{2b-2}) \\ =& \ \underline{E}_{2a}\otimes\mathcal{E}_{2b-2} + \mathcal{E}_{2a-2}\otimes \underline{E}_{2b} +
    \overline{\underline{E}}_{2a}\otimes\mathcal{E}_{2b-2} + \mathcal{E}_{2a-2}\otimes \overline{\underline{E}}_{2b}.
\end{align*}

This form is closed and equivariant but, for the purposes of this paper, we do not consider this object to be very interesting. Indeed, if we undergo the same process as in Sections \ref{doub} and \ref{EVTEI}, but with $\tilde{D}_{2a,2b}$ instead of $D_{2a,2b}$, we get analogous equations to \eqref{MI3THM1} and \eqref{MI3THM2}:
\begin{align*}
       & \partial (\tilde{F}^{(k)}_{2a+2,2b+2})_{2w,0} = \binom{2a}{k} \mathbb{L}^{k+1}\mathbb{G}_{2a+2}\mathcal{E}_{2b-k,k} \\ & \hspace{32mm}  + (-1)^k\binom{2b}{k}\mathbb{L}^{k+1}\mathbb{G}_{2b+2}\mathcal{E}_{2a-k,k} + \mathbb{L}f ,  \nonumber \\[1mm]
& \partial (\tilde{F}^{(k)}_{2a+2,2b+2})_{r,s} - (r+1)(\tilde{F}^{(k)}_{2a+2,2b+2})_{r+1,s-1}  \\
& \hspace{24mm}= \binom{2a}{k}\binom{k+s}{k} \mathbb{L}^{k+1}\mathbb{G}_{2a+2}\mathcal{E}_{2b-k-s,k+s} \\ &
\hspace{32mm}  + (-1)^k \binom{2b}{k}\binom{k+s}{k} \mathbb{L}^{k+1}\mathbb{G}_{2b+2}\mathcal{E}_{2a-k-s,k+s}  , \hspace{2mm}  \text{  if } s \geq 1, \nonumber
\end{align*}
where $f$ is a cusp form of weight $2w+2$.
Then, when $2a=2b=2$ and $k=0$, we have:
\begin{align*}
&\partial (\tilde{F}^{(0)}_{4,4})_{4,0}=\mathbb{L}\mathbb{G}_4 \mathcal{E}_{2,0} + \mathbb{L}\mathbb{G}_4 \mathcal{E}_{2,0}
\\[0.5mm]
&\partial (\tilde{F}^{(0)}_{4,4})_{3,1} - 4(\tilde{F}^{(0)}_{4,4})_{4,0}=\mathbb{L}\mathbb{G}_4 \mathcal{E}_{1,1} + \mathbb{L}\mathbb{G}_4 \mathcal{E}_{1,1}
\hspace{3mm}  
&& \partial (\tilde{F}^{(0)}_{4,4})_{1,3} - 2(\tilde{F}^{(0)}_{4,4})_{2,2}=0 + 0
\\[0.5mm]
&\partial (\tilde{F}^{(0)}_{4,4})_{2,2} - 3(\tilde{F}^{(0)}_{4,4})_{3,1}=\mathbb{L}\mathbb{G}_4 \mathcal{E}_{0,2} + \mathbb{L}\mathbb{G}_4 \mathcal{E}_{0,2}
\hspace{3mm}  
&& \partial (\tilde{F}^{(0)}_{4,4})_{0,4} - (\tilde{F}^{(0)}_{4,4})_{1,3}=0 + 0.
\end{align*} 

The reason why we keep the RHS of the equations as two separate summands will soon become clear.
When $k=1$, we have the equations:
\begin{align*}
&\partial (\tilde{F}^{(1)}_{4,4})_{2,0}=2\mathbb{L}^2\mathbb{G}_4 \mathcal{E}_{1,1}  - 2\mathbb{L}^2\mathbb{G}_4 \mathcal{E}_{1,1}
\hspace{22mm} \partial (\tilde{F}^{(1)}_{4,4})_{0,2} - (\tilde{F}^{(1)}_{4,4})_{1,1}= 0 - 0
\\[0.5mm]
&\partial (\tilde{F}^{(1)}_{4,4})_{1,1} - 2(\tilde{F}^{(1)}_{4,4})_{2,0}=4\mathbb{L}^2\mathbb{G}_4 \mathcal{E}_{0,2}  - 4\mathbb{L}^2\mathbb{G}_4 \mathcal{E}_{0,2}.
\end{align*} 

Next, when $k=2$ we have:
\begin{align*}
    &\partial (\tilde{F}^{(2)}_{4,4})_{0,0} = \mathbb{L}^3 \mathbb{G}_4 \mathcal{E}_{0,2} + \mathbb{L}^3 \mathbb{G}_4 \mathcal{E}_{0,2}.
    \end{align*}
    
We finish by looking at when $2a=2$, $2b=4$ and $k=0$:
\begin{align}
&\partial (\tilde{F}^{(0)}_{4,6})_{6,0}=  \mathbb{L}\mathbb{G}_4\mathcal{E}_{4,0} + \mathbb{L}\mathbb{G}_6 \mathcal{E}_{2,0} 
&& \label{tildeF}
\\
&\partial (\tilde{F}^{(0)}_{4,6})_{5,1} - 6(\tilde{F}^{(0)}_{4,6})_{6,0}= \mathbb{L}\mathbb{G}_4\mathcal{E}_{3,1} + \mathbb{L}\mathbb{G}_6 \mathcal{E}_{1,1}   \nonumber
\\
&\partial (\tilde{F}^{(0)}_{4,6})_{4,2} - 5(\tilde{F}^{(0)}_{4,6})_{5,1}= \mathbb{L}\mathbb{G}_4\mathcal{E}_{2,2} + \mathbb{L}\mathbb{G}_6 \mathcal{E}_{0,2}
 \nonumber
\\
&\partial (\tilde{F}^{(0)}_{4,6})_{3,3} - 4(\tilde{F}^{(0)}_{4,6})_{4,2}=\mathbb{L}\mathbb{G}_4\mathcal{E}_{1,3}+0
&&   \partial (\tilde{F}^{(0)}_{4,6})_{1,5} - 2(\tilde{F}^{(0)}_{4,6})_{2,4}=0 + 0 \nonumber
\\
&  \partial (\tilde{F}^{(0)}_{4,6})_{2,4} - 3(\tilde{F}^{(0)}_{4,6})_{3,3}= \mathbb{L}\mathbb{G}_4\mathcal{E}_{0,4}+0 \nonumber
&&  \partial (\tilde{F}^{(0)}_{4,6})_{0,6} - (\tilde{F}^{(0)}_{4,6})_{1,5}=0 + 0. \nonumber
\end{align} 
    
All of the above equations, however, can be solved in terms of real analytic Eisenstein series. For example, $(\tilde{F}^{(0)}_{4,6})_{6,0} = \mathcal{E}_{4,0}\mathcal{E}_{2,0}$ and $(\tilde{F}^{(0)}_{4,6})_{5,1} = \mathcal{E}_{4,0}\mathcal{E}_{1,1}+ \mathcal{E}_{2,0}\mathcal{E}_{3,1}$. This is why we do not consider these functions, and hence $\tilde{D}_{2a,2b}$, to be very interesting.

Now, we define a new function $ (F'^{(0)}_{4,6})_{6,0}$ by removing one of the summands from the RHS of equation \eqref{tildeF}, say $\mathbb{L}\mathbb{G}_6 \mathcal{E}_{2,0}$, to give $\partial (F'^{(0)}_{4,6})_{6,0} :=\mathbb{L}\mathbb{G}_4\mathcal{E}_{4,0}$. This is no longer solvable in terms of real analytic Eisenstein series. However, the equation for $\partial ({F'}^{(0)}_{4,6})_{6,0}$ is exactly the equation for $\partial ({F}^{(0)}_{4,6})_{6,0}$, from Example 2. This same argument holds for any of the  $(\tilde{F}^{(k)}_{2a+2,2b+2})_{r,s}$. 

Therefore, the equations defining the $\partial ({F}^{(k)}_{2a+2,2b+2})_{r,s}$ can be thought, in an oversimplified way, as coming from the equations for $\partial (\tilde{F}^{(k)}_{2a+2,2b+2})_{r,s}$, but with certain summands removed. It is then the removal of these summands that gives rise to the ``new'' functions that have been found. Or, in other words, we construct $D_{2a,2b}$ by
\begin{equation*}
   D_{2a,2b} := \tilde{D}_{2a,2b} \, - \, \mathcal{E}_{2a-2} \otimes \underline{E}_{2b} \, - \, \underline{\overline{E}}_{2a} \otimes \mathcal{E}_{2b-2}.
\end{equation*}
We are also very fortunate that removing terms from $\tilde{D}_{2a,2b}$ to give $D_{2a,2b}$ does not also remove the closure. 

We finish by noting that, whenever $2a=2b$ and $k$ is even, we have 
\begin{equation*}
    \partial(F^{(k)}_{2a+2,2b+2})_{r,s} =\dfrac{1}{2} \partial(\tilde{F}^{(k)}_{2a+2,2b+2})_{r,s},
\end{equation*}
as can be seen in the examples above. This equality sheds some light on why the removal of summands does not give ``new'' functions under these specific conditions.

\section{The Length Three Case}\label{length3}
\subsection{Triple Integrals}

The main obstacle with extending the theory from the previous section to $\mathcal{MI}_3$ is finding a closed modular equivariant object, say $D_{2a,2b,2c}$, which also leads to what we will consider to be ``new" functions in $\mathcal{MI}_3$. (An exact definition of a ``new'' length three function will be given after Theorem \ref{3MI3THM}.) In the previous section we introduced the form $\tilde{D}_{2a,2b}$, we have a similar definition for the length three case:
 \begin{align*}
 &\tilde{D}_{2a,2b,2c}   := \  d(\mathcal{E}_{2a-2} \otimes \mathcal{E}_{2b-2} \otimes \mathcal{E}_{2c-2}) \\ & \quad =   \underline{E}_{2a}\otimes\mathcal{E}_{2b-2} \otimes \mathcal{E}_{2c-2} \ + \ \mathcal{E}_{2a-2}\otimes\underline{E}_{2b}\otimes\mathcal{E}_{2c-2} \ + \ \mathcal{E}_{2a-2} \otimes \mathcal{E}_{2b-2} \otimes \underline{E}_{2c}  \\
& \quad \quad +\ \mathcal{E}_{2a-2} \otimes \mathcal{E}_{2b-2} \otimes \underline{\overline{E}}_{2c} \ + \ \mathcal{E}_{2a-2}\otimes\underline{\overline{E}}_{2b}\otimes\mathcal{E}_{2c-2} \ + \ \underline{\overline{E}}_{2a}\otimes\mathcal{E}_{2b-2} \otimes \mathcal{E}_{2c-2}.
 \end{align*}
This may seem like a suitable candidate as it is both closed and modular equivariant. However, as with $\tilde{D}_{2a,2b}$, when following the same steps from Sections \ref{doub} and \ref{EVTEI}, but with $\tilde{D}_{2a,2b,2c}$ instead of $D_{2a,2b}$, the resulting theorem does not lead to any ``new" functions. 

In the length two case, we saw how we could define $D_{2a,2b}$ by removing summands from $\tilde{D}_{2a,2b}$; this resulting form was also closed. However, in the length three case, removing any combination of summands from $\tilde{D}_{2a,2b,2c}$ will result in a form that is not closed.  (In fact, this result generalises for any length $n>2$. We can define a form $\tilde{D}_{a_1, a_2, \dots , a_n} := d(\mathcal{E}_{2a_1-2}\otimes \mathcal{E}_{2a_2-2} \otimes \cdots \otimes \mathcal{E}_{2a_n-2})$. As far as the author is aware, removing any combination of summands from $\tilde{D}_{a_1, a_2, \dots , a_n}$  results in a form that is not closed. It appears, then, that the length two case is unique in the ability to remove summands from $\tilde{D}_{2a,2b}$ whilst keeping the property of closure.) 

Therefore, we have the problem of finding an object that is modular equivariant, closed and leads to ``new'' functions.
In order to solve this problem, we use the functions produced by Theorem \ref{MI3THM} directly in our definition of $D_{2a,2b,2c}$. 

\begin{definition}\label{defabc}
For $a,b,c \geq 1$, we define the forms $D_{2a+2,2b+2,2c+2}$ by
\begin{align*}
    D_{2a+2,2b+2,2c+2}(z) &: \mathcal{H} \to \left(V_{2a+2b+2c}  \right) \otimes \left(\mathbb{C} \, dz + \mathbb{C} \,d\bar z \right) \\
     D_{2a+2,2b+2,2c+2}(z) &:= E_{2a+2}(z) \cdot 
    F^{(0)}_{2b+2,2c+2}(z) \ dz + F^{(0)}_{2a+2,2b+2}(z) \cdot \overline{E}_{2c+2}(z) 
    \  d\bar{z}
    \\[1mm]
     & \qquad + 2\pi i  \mathcal{E}_{2c}(z)f^{(0)}_{2a+2,2b+2}(z)(X-z Y)^{2a+2b}  dz \\[1mm]
    & \qquad\qquad -2\pi i  \mathcal{E}_{2a}(z)\bar{g}^{(0)}_{2b+2,2c+2}(z)(X-\bar{z}Y)^{2b+2c} d\bar{z}.
\end{align*}
\end{definition}

We will see in Lemma \ref{closedlem} that these last two summands are included to deal with the modular forms that appear in equations \eqref{dFdz} and \eqref{dFdz2}. 

We know that $E_{2a+2}(z)dz$, $\overline{E}_{2c+2}(z)d\bar{z}$, $ \mathcal{E}_{2a}$ and $\mathcal{E}_{2c}$ are modular equivariant and, by Section \ref{EVTEI}, $F^{(0)}_{2a+2,2b+2}$ and $F^{(0)}_{2b+2,2c+2}$ are also equivariant. Now, as the modular form $f^{(0)}_{2a+2,2b+2}$ (resp. $g^{(0)}_{2b+2,2c+2}$) is of weight $2a+2b+2$ (resp. $2b+2c+2$), we have the modular equivariance of $D_{2a+2,2b+2,2c+2}$:
\begin{equation*}
        D_{2a+2,2b+2,2c+2}(\gamma z)|_{\gamma} = D_{2a+2,2b+2,2c+2}(z), \qquad \forall \ \gamma\in \Gamma_1.
\end{equation*}

\begin{lemma}\label{closedlem}
The forms $D_{2a+2,2b+2,2c+2}$ are closed:
\begin{equation*}
    d D_{2a+2,2b+2,2c+2} = 0.
\end{equation*}
\end{lemma}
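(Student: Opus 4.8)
The plan is to write the one-form as $D_{2a+2,2b+2,2c+2} = P\,dz + Q\,d\bar z$ and to use the fact that, for any smooth $V_{2a+2b+2c}$-valued coefficients, $d(P\,dz + Q\,d\bar z) = \big(\tfrac{\partial Q}{\partial z} - \tfrac{\partial P}{\partial \bar z}\big)\,dz\wedge d\bar z$. Thus closedness reduces to the single identity $\tfrac{\partial P}{\partial \bar z} = \tfrac{\partial Q}{\partial z}$. Reading off Definition \ref{defabc},
\begin{align*}
P &= E_{2a+2}\cdot F^{(0)}_{2b+2,2c+2} + 2\pi i\,\mathcal{E}_{2c}\,f^{(0)}_{2a+2,2b+2}(X-zY)^{2a+2b},\\
Q &= F^{(0)}_{2a+2,2b+2}\cdot \overline{E}_{2c+2} - 2\pi i\,\mathcal{E}_{2a}\,\bar g^{(0)}_{2b+2,2c+2}(X-\bar zY)^{2b+2c}.
\end{align*}

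First I would exploit (anti)holomorphy to kill most terms: the factors $E_{2a+2}$, $(X-zY)$ and the holomorphic modular form $f^{(0)}_{2a+2,2b+2}$ depend only on $z$, while $\overline{E}_{2c+2}$, $(X-\bar zY)$ and $\bar g^{(0)}_{2b+2,2c+2}$ depend only on $\bar z$. Hence $\tfrac{\partial}{\partial \bar z}$ acting on $P$ only reaches $F^{(0)}_{2b+2,2c+2}$ and $\mathcal{E}_{2c}$, and $\tfrac{\partial}{\partial z}$ acting on $Q$ only reaches $F^{(0)}_{2a+2,2b+2}$ and $\mathcal{E}_{2a}$. Next I would substitute the differential equations already established: for the two $F^{(0)}$ factors, equations \eqref{dFdz} and \eqref{dFdz2} specialised to $k=0$ (where $\delta^0$ is just multiplication), and for the Eisenstein factors the relations \eqref{Elinks}, namely $\tfrac{\partial}{\partial \bar z}\mathcal{E}_{2c} = \tfrac12\overline{E}_{2c+2}$ and $\tfrac{\partial}{\partial z}\mathcal{E}_{2a} = \tfrac12 E_{2a+2}$. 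After this substitution each of $\tfrac{\partial P}{\partial \bar z}$ and $\tfrac{\partial Q}{\partial z}$ collapses to three terms: the common piece $\tfrac12\,E_{2a+2}\cdot\mathcal{E}_{2b}\cdot\overline{E}_{2c+2}$, together with a term in $f^{(0)}_{2a+2,2b+2}(X-zY)^{2a+2b}\cdot\overline{E}_{2c+2}$ and a term in $E_{2a+2}\cdot\bar g^{(0)}_{2b+2,2c+2}(X-\bar zY)^{2b+2c}$; a bookkeeping of signs then shows the two expressions agree term for term.

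The conceptual crux—rather than any lengthy computation—is understanding why the last two summands of Definition \ref{defabc} are present. The ``naive'' form $E_{2a+2}\cdot F^{(0)}_{2b+2,2c+2}\,dz + F^{(0)}_{2a+2,2b+2}\cdot\overline{E}_{2c+2}\,d\bar z$ is \emph{not} closed on its own, precisely because the $F^{(0)}$'s satisfy \emph{inhomogeneous} equations carrying the modular-form terms $f^{(0)}$ and $\bar g^{(0)}$. Differentiating the naive $dz$-part in $\bar z$ leaves an anomalous contribution $-\pi i\,E_{2a+2}\cdot\bar g^{(0)}_{2b+2,2c+2}(X-\bar zY)^{2b+2c}$, while differentiating the naive $d\bar z$-part in $z$ leaves $+\pi i\,f^{(0)}_{2a+2,2b+2}(X-zY)^{2a+2b}\cdot\overline{E}_{2c+2}$, and these two anomalies do not match each other. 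The whole point of the extra summands is that, upon applying $\tfrac{\partial}{\partial \bar z}$ and $\tfrac{\partial}{\partial z}$ respectively via \eqref{Elinks}, they reproduce exactly these anomalous terms, restoring the equality $\tfrac{\partial P}{\partial \bar z} = \tfrac{\partial Q}{\partial z}$ and hence $dD_{2a+2,2b+2,2c+2}=0$. Identifying this cross-matching of the $f^{(0)}$- and $\bar g^{(0)}$-anomalies is the heart of the argument, and it is what makes the length-three construction work despite the failure of the naive form to be closed.
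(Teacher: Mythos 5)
Your argument is correct and follows essentially the same route as the paper's proof: reduce closedness to equality of the mixed partial derivatives of the $dz$- and $d\bar z$-coefficients, use (anti)holomorphy of $E_{2a+2}$, $\overline{E}_{2c+2}$, $f^{(0)}$, $\bar g^{(0)}$, and substitute equations \eqref{dFdz}--\eqref{dFdz2} at $k=0$ together with \eqref{Elinks}, after which the three surviving terms on each side match. Your closing observation about the role of the last two summands in Definition \ref{defabc} is exactly the point the paper makes in the remark preceding the lemma.
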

\begin{proof}
Throughout this proof we use the abbreviations $f:= {f}^{(0)}_{2a+2,2b+2}$ and $\bar{g} := \bar{g}^{(0)}_{2b+2,2c+2}$.
For a form $\sigma(z) := u_1(z) \, dz + u_2(z) \, d\bar{z}$, the exterior derivative is given by
\begin{equation*}
    d\sigma = \left( \dfrac{\partial u_1}{\partial \bar{z}}  - \dfrac{\partial u_2}{\partial z} \right) \  d\bar{z} \wedge  dz. 
\end{equation*}
For $\sigma(z) =  D_{2a+2,2b+2,2c+2}(z)$, we have
\begin{align}
     \dfrac{\partial u_1}{\partial \bar{z}}  - & \dfrac{\partial u_2}{\partial z} = E_{2a+2}(z) 
    \dfrac{\partial}{\partial \bar{z}}F^{(0)}_{2b+2,2c+2}(z)  +\pi i \overline{E}_{2c+2}(z) f(z)(X-z Y)^{2a+2b} \nonumber  
    \\ &  - \dfrac{\partial}{\partial z}F^{(0)}_{2a+2,2b+2}(z) \cdot \overline{E}_{2c+2}(z) +\pi i  E_{2a+2}(z) \overline{g}(z) (X-\bar{z}Y)^{2b+2c} \label{closed1}.
\end{align}
Now, using equation \eqref{dFdz} for $k=0$, we deduce that
\begin{equation}\label{closea}
   \dfrac{\partial }{\partial z} F^{(0)}_{2a+2,2b+2}(z)=  \dfrac{1}{2}
   E_{2a+2}(z) \mathcal{E}_{2b}(z) + \pi i f(z)(X-z Y)^{2a+2b}
\end{equation}
and, similarly, we have
\begin{equation}\label{closeb}
    \dfrac{\partial}{\partial \bar{z}}F^{(0)}_{2b+2,2c+2}(z) =  \dfrac{1}{2} \mathcal{E}_{2b}(z)  \overline{E}_{2c+2}(z) - \pi i \bar{g}(z)(X-\bar{z} Y)^{2b+2c}. 
\end{equation}
Inserting these two equations back into \eqref{closed1} gives the required result.
\end{proof}

The fact that the first summand on the RHS of equations \eqref{closea} and \eqref{closeb} contains a matching $\mathcal{E}_{2b}$ is a key part of the above proof. These equations also demonstrate why generalising Definition \ref{defabc} for $k>0$ is not as straightforward as the length two case. We will take a look at what happens to these equations when $k=1$. The first summand on the RHS of \eqref{closea}, which will correspond to the first summand on the RHS of \eqref{dFdz} when $k=1$, becomes
\begin{equation}\label{closec}
    a\mathbb{L} E_{2a+2}(z) \! \sum_{r+s=2b} s \, \mathcal{E}_{r,s}(X-zY)^{r-1}(X-\bar{z}Y)^{s-1},
\end{equation}
whilst the first summand on the RHS of \eqref{closeb} becomes
\begin{equation}\label{closee}
   \sum_{r+s=2b} r \, \mathcal{E}_{r,s} (X-zY)^{r-1}(X-\bar{z}Y)^{s-1} \cdot c\mathbb{L}\overline{E}_{2c+2}(z).
\end{equation}
Due to the appearance of $s$ inside one summand and $r$ inside the other, we conclude that we are not able to easily edit the proof of Lemma \ref{closedlem} to hold for $k=1$. The author, however, does not believe that a version of Definition \ref{defabc} for $k=1$ (or higher) is out of reach, but it will require extra terms, not yet found, to deal with the difference between equations \eqref{closec} and \eqref{closee}.  

Lemma \ref{closedlem} leads to the definition of the following function:
\begin{equation*}
    K_{2a+2,2b+2,2c+2}(z) = -\dfrac{1}{2} \int_{z}^{\reg} D_{2a+2,2b+2,2c+2}.
\end{equation*}
This integral depends only on the homotopy class of the chosen path and is regularised as in Section \ref{doub}.
This function also satisfies the following differential equations
\begin{align}
    \dfrac{\partial}{\partial z} K_{2a+2,2b+2,2c+2}(z) &= \pi i  \mathbb{G}_{2a+2}(z)(X-zY)^{2a} \cdot F^{(0)}_{2b+2,2c+2}(z)  \label{3dzk1}\\
    & \hspace{14mm} +\pi i  \mathcal{E}_{2c}(z)f^{(0)}_{2a+2,2b+2}(z)(X-z Y)^{2a+2b}
, \nonumber
    \\[1mm]
     \dfrac{\partial}{\partial \bar{z}} K_{2a+2,2b+2,2c+2}(z) &=  F^{(0)}_{2a+2,2b+2} (z)\cdot \overline{\pi i  \mathbb{G}_{2c+2}}(z)(X-\bar{z}Y)^{2c} \label{3dzbar} \\
     &\hspace{14mm} - \pi i  \mathcal{E}_{2a}(z)\bar{g}^{(0)}_{2b+2,2c+2}(z)(X-\bar{z}Y)^{2b+2c}. \nonumber
\end{align}
If we set
\begin{equation*}
    J(z) :=\mathbb{G}_{2a+2}(z)(X-zY)^{2a} F^{(0)}_{2b+2,2c+2}(z)+  \mathcal{E}_{2c}(z)f^{(0)}_{2a+2,2b+2}(z)(X-z Y)^{2a+2b},
\end{equation*}
then, by equations \eqref{e2a} and \eqref{Fkab}, 
\begin{align*}
    J(z) =& \ \mathbb{G}_{2a+2}(z)(X-zY)^{2a}\!\! \sum_{r+s=2b+2c} \!\! \left(F^{(0)}_{2b+2,2c+2} \right)_{r,s}(X-zY)^r(X-\bar{z}Y)^s \\
    & \hspace{15mm} + \sum_{r+s=2c}\mathcal{E}_{r,s}(X-zY)^r(X-\bar{z}Y)^s  f^{(0)}_{2a+2,2b+2}(X-zY)^{2a+2b} 
    \\
    = & \ \mathbb{G}_{2a+2}(z)\!\! \sum_{r+s=2b+2c} \!\! \left(F^{(0)}_{2b+2,2c+2} \right)_{r,s}(X-zY)^{r+2a}(X-\bar{z}Y)^s 
     \\ & \hspace{30mm} + f^{(0)}_{2a+2,2b+2} \sum_{r+s=2c}\mathcal{E}_{r,s}(X-zY)^{r+2a+2b}(X-\bar{z}Y)^s  .
\end{align*}
Using the change of variables $r \mapsto r-2a$ in the first sum and $r \mapsto r-2a-2b$ in the second sum, we conclude that
\begin{align*}
J(z)     = & \ \mathbb{G}_{2a+2}(z) \sum_{\mathclap{\substack{r+s= \\ 2a+2b+2c}}}  \left(F^{(0)}_{2b+2,2c+2} \right)_{r-2a,s}(X-zY)^r(X-\bar{z}Y)^s 
     \\ & \hspace{38mm} + f^{(0)}_{2a+2,2b+2} \sum_{\mathclap{\substack{r+s= \\
    2a+2b+2c}}}\mathcal{E}_{r-2a-2b,s}(X-zY)^r(X-\bar{z}Y)^s .
\end{align*}
Therefore, equation \eqref{3dzk1} becomes
\begin{equation}\label{3dzk3}
    \dfrac{\partial}{\partial z} K_{2a+2,2b+2,2c+2}(z)= \pi i \, J(z)
\end{equation}
with
\begin{equation}\label{3dzk4}
    J_{r,s} = \mathbb{G}_{2a+2} \left(F^{(0)}_{2b+2,2c+2}\right)_{r-2a,s} + f^{(0)}_{2a+2,2b+2} \mathcal{E}_{r-2a-2b,s} .
\end{equation}

\subsection{Equivariant Versions of Triple Integrals}

Using equations \eqref{3dzk1} and \eqref{3dzbar} with the same reasoning as used in Section \ref{EVTEI}, we have that, for all $\gamma \in \Gamma_1$,
\begin{align*}
    K_{2a+2,2b+2,2c+2}(\gamma z)|_{\gamma}-K_{2a+2,2b+2,2c+2}(z) =C_{\gamma},
\end{align*}
for some constant $C_{\gamma}$ (does not depend on $z$) which also defines a coboundary and hence a cocycle:
\begin{equation*}
    \gamma \mapsto C_{\gamma} \in Z^1(\Gamma_1,V_{2a+2b+2c} \otimes \mathbb{C}).
\end{equation*}
As before, we then define a function $M_{2a+2,2b+2,2c+2}$ such that we have modular equivariance and 
\begin{equation*}
    M_{2a+2,2b+2,2c+2} = K_{2a+2,2b+2,2c+2} - c - \dfrac{1}{2} \int^{\reg}_{z} (\underline{h} + \overline{\underline{k}}),
\end{equation*}
where $h$ (resp. $k$) is a standard modular form (resp. modular cusp form) of weight $2a+2b+2c+2$. The equation above uniquely determines $c$, $h$, $k$ and $M_{2a+2,2b+2,2c+2}$.

\begin{theorem}\label{3MI3THM}
Let $a,b,c \geq 1$ and set $w=a+b+c$. There exists a family of elements $(G_{2a+2,2b+2,2c+2})_{r,s} \in \mathcal{MI}_3 \cap \mathcal{M}_{r,s} $ of total modular weight $2w = r+s$, with $r,s \geq 0$, which satisfy the following:
\begin{align}
     a)
     \ & \partial (G_{2a+2,2b+2,2c+2})_{2w,0} =  \mathbb{L}\mathbb{G}_{2a+2} \left(F^{(0)}_{2b+2,2c+2}\right)_{2b+2c,0} \label{3MI3THM1} \\ & \hspace{60mm} + \mathbb{L}f^{(0)}_{2a+2,2b+2} \mathcal{E}_{2c,0}   + \mathbb{L}h, \nonumber \\[1mm]
    \ & \partial (G_{2a+2,2b+2,2c+2})_{r,s} - (r+1)(G_{2a+2,2b+2,2c+2})_{r+1,s-1} \label{3MI3THM2}\\ & \hspace{10mm} = \mathbb{L}\mathbb{G}_{2a+2} \left(F^{(0)}_{2b+2,2c+2}\right)_{r-2a,s} \!+ \mathbb{L}f^{(0)}_{2a+2,2b+2} \mathcal{E}_{r-2a-2b,s}  , \hspace{4mm}  \tag*{if  $s \geq 1$,} \nonumber
\end{align}
where $h$ is a cusp form of weight $2w+2$.
\begin{align}
      b)  \ & \overline{\partial} (G_{2a+2,2b+2,2c+2})_{0,2w} =  \mathbb{L}\overline{\mathbb{G}}_{2c+2}\left(F^{(0)}_{2a+2,2b+2}\right)_{0,2a+2b} \label{3thmk11} \\ & \hspace{60mm} + \mathbb{L}\bar{g}^{(0)}_{2b+2,2c+2} \mathcal{E}_{0,2a}
+ \mathbb{L}\overline{k}, \nonumber  \\[1mm]
   & \overline{\partial} (G_{2a+2,2b+2,2c+2})_{r,s} - (s+1)(G_{2a+2,2b+2,2c+2})_{r-1,s+1} \label{3thmk12}\\ 
   &\hspace{10mm} =\mathbb{L}\overline{\mathbb{G}}_{2c+2}\left(F^{(0)}_{2a+2,2b+2}\right)_{r,s-2c} \!+ \mathbb{L}\bar{g}^{(0)}_{2b+2,2c+2} \mathcal{E}_{r, s-2b-2c},   \hspace{4mm} \tag*{   if  $r \geq 1$,} \nonumber
\end{align}
where $k$ is a modular form of weight $2w+2$.
\end{theorem}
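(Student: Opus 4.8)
The plan is to \emph{define} the family $(G_{2a+2,2b+2,2c+2})_{r,s}$ as the coefficients of the equivariant function $M_{2a+2,2b+2,2c+2}$ constructed in the previous subsection, expanded in the basis $(X-zY)^r(X-\bar zY)^s$, and then to read the stated $\partial$- and $\overline{\partial}$-equations off the differential equations already established for $K_{2a+2,2b+2,2c+2}$. Concretely, I would first apply Lemma~\ref{7.1}: since $M_{2a+2,2b+2,2c+2}\colon\mathcal{H}\to V_{2a+2b+2c}\otimes\mathbb{C}$ is modular equivariant, it decomposes uniquely as $M_{2a+2,2b+2,2c+2}=\sum_{r+s=2w}M_{r,s}(z)(X-zY)^r(X-\bar zY)^s$ with each $M_{r,s}$ modular of weights $(r,s)$, and I set $(G_{2a+2,2b+2,2c+2})_{r,s}:=M_{r,s}$. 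The equivariance used here is exactly what the Eichler--Shimura correction provides: the cocycle $\gamma\mapsto C_\gamma\in Z^1(\Gamma_1,V_{2a+2b+2c}\otimes\mathbb{C})$ attached to $K_{2a+2,2b+2,2c+2}$ is killed by subtracting the constant $c$ together with the regularised period integrals of $\underline{h}$ and $\overline{\underline{k}}$.

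Next I would differentiate. Combining the differential equation \eqref{3dzk3}--\eqref{3dzk4} for $K_{2a+2,2b+2,2c+2}$ with the fundamental theorem of calculus applied to the correction term $-\tfrac12\int_z^{\reg}(\underline{h}+\overline{\underline{k}})$, which contributes $\pi i\,h(z)(X-zY)^{2w}$ to the $z$-derivative, I obtain $\tfrac{\partial}{\partial z}M_{2a+2,2b+2,2c+2}=\pi i\bigl(J+h(X-zY)^{2w}\bigr)$. Thus the source $A:=J+h(X-zY)^{2w}$ differs from $J$ only in its $(2w,0)$-component, where it acquires the extra summand $h$. Feeding $A$ into Lemma~\ref{dF=A} turns the single vector-valued equation into the scalar system
\begin{align*}
\partial M_{2w,0} &= \mathbb{L}\bigl(J_{2w,0}+h\bigr),\\
\partial M_{r,s}-(r+1)M_{r+1,s-1} &= \mathbb{L}\,J_{r,s}\qquad (s\geq 1),
\end{align*}
and substituting the explicit $J_{r,s}$ of \eqref{3dzk4}, using $2w-2a=2b+2c$ and $2w-2a-2b=2c$, reproduces \eqref{3MI3THM1} and \eqref{3MI3THM2}. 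The statements \eqref{3thmk11}--\eqref{3thmk12} follow identically from \eqref{3dzbar} and the second half of Lemma~\ref{dF=A}.

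It then remains to show $(G_{2a+2,2b+2,2c+2})_{r,s}\in\mathcal{MI}_3$, for which I would invoke Lemma~\ref{thmFrs} with $n=3$; note this also requires $M_{r,s}\in\mathcal{M}_{r,s}$ as an input hypothesis. Each right-hand side above lies in $M[\mathbb{L}]\times\mathcal{MI}_2$: the summand $\mathbb{L}\mathbb{G}_{2a+2}\bigl(F^{(0)}_{2b+2,2c+2}\bigr)_{r-2a,s}$ is the product of the holomorphic form $\mathbb{G}_{2a+2}\in M$, a power of $\mathbb{L}$, and the factor $\bigl(F^{(0)}_{2b+2,2c+2}\bigr)_{r-2a,s}\in\mathcal{MI}_2$ supplied by Theorem~\ref{MI3THM}; the summand $\mathbb{L}f^{(0)}_{2a+2,2b+2}\mathcal{E}_{r-2a-2b,s}$ pairs the cusp form $f^{(0)}_{2a+2,2b+2}\in M$ with $\mathcal{E}_{r-2a-2b,s}\in\mathcal{MI}_1\subset\mathcal{MI}_2$; and $\mathbb{L}h$ lies in $M[\mathbb{L}]\times\mathcal{MI}_0$. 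The conjugate computations place the $\overline{\partial}$ right-hand sides in $\overline{M}[\mathbb{L}]\times\mathcal{MI}_2$, so Lemma~\ref{thmFrs} delivers membership in $\mathcal{MI}_3$.

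The hard part is not this assembly but the two inputs feeding into it. First, the Eichler--Shimura step must be handled with the same care as in Theorem~\ref{MI3THM}: one decomposes $C_\gamma$ into the cocycles of cusp forms, their conjugates, Eisenstein series and a coboundary, and --- exactly as in the remark following Theorem~\ref{MI3THM} --- uses the freedom to arrange that $h$ be the guaranteed cusp form in part~a) while $k$ is a general modular form in part~b), rather than the reverse. Second, Lemma~\ref{7.1} only yields modularity of the $M_{r,s}$; to place them in $\mathcal{M}_{r,s}$ (and hence to legitimately apply Lemma~\ref{thmFrs}) one must still verify that each $M_{r,s}$ has a Fourier expansion of the shape \eqref{f=aqq}, which amounts to controlling the growth of the regularised integrand at the cusp through the regularisation recalled in Section~\ref{doub}. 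Once these two points are secured, the remainder is the formal bookkeeping sketched above.
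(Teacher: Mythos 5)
Your proposal is correct and follows essentially the same route as the paper's own proof: define $(G_{2a+2,2b+2,2c+2})_{r,s}$ as the components of the equivariant function $M_{2a+2,2b+2,2c+2}$ via Lemma \ref{7.1}, differentiate using \eqref{3dzk3}--\eqref{3dzk4} plus the contribution $\pi i\,h(z)(X-zY)^{2w}$ from the correction integral, convert to the scalar system with Lemma \ref{dF=A}, and conclude membership in $\mathcal{MI}_3$ via Lemma \ref{thmFrs} after checking the right-hand sides lie in $M[\mathbb{L}]\times\mathcal{MI}_2$ (resp.\ $\overline{M}[\mathbb{L}]\times\mathcal{MI}_2$). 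The only cosmetic difference is that the paper makes the cusp-form normalisation of $h$ explicit by modifying $M_{2a+2,2b+2,2c+2}$ by a suitable multiple of $\mathcal{E}_{2w}$, which is exactly the ``freedom'' you invoke.
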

\begin{proof}
Throughout this proof we use the abbreviation $M_{abc}\!:=\!M_{2a+2,2b+2,2c+2}$. The function $M_{2a+2,2b+2,2c+2}$ is, by definition, equivariant. Using Lemma \ref{7.1}, we obtain its modular components $(M_{2a+2,2b+2,2c+2})_{r,s}$, which are elements of $\mathcal{M}_{r,s}$. Now, looking at $\dfrac{\partial}{\partial z}M_{2a+2,2b+2,2c+2}$, we have
\begin{align}
    \dfrac{\partial}{\partial z}M_{abc}
    &= \dfrac{\partial}{\partial z}K_{2a+2,2b+2,2c+2} - 
    \dfrac{\partial}{\partial z}\int^{\reg}_{z}  \pi i h(\tau)(X-\tau Y)^{2w}d\tau \\
   & \hspace{42mm}-\dfrac{\partial}{\partial z}\int^{\reg}_{z}  \pi i \overline{k}(\tau)(X-\bar{\tau}Y)^{2w}d\bar{\tau}  \nonumber
   \\
   &= \dfrac{\partial}{\partial z}K_{2a+2,2b+2,2c+2} + \pi i  h(z)(X-zY)^{2w}. \nonumber 
   \end{align}
We write this as
\begin{equation}\label{3bigproof1}
    \dfrac{\partial}{\partial z}M_{2a+2,2b+2,2c+2} = \pi i \, H,
\end{equation}
where, using equations \eqref{3dzk3} and \eqref{3dzk4}, 
\begin{equation*}
    H_{r,s}=  \mathbb{G}_{2a+2} \left(F^{(0)}_{2b+2,2c+2}\right)_{r-2a,s} + f^{(0)}_{2a+2,2b+2} \mathcal{E}_{r-2a-2b,s} + A_{r,s}
\end{equation*}
and $A_{r,s} = 0$ except for $A_{2w,0}=h$.
Lemma \ref{dF=A} tells us that equation \eqref{3bigproof1} is equal to the following system of equations, for all $r+s=2w$ and $r,s \geq 0$,
\begin{align*}
    &\partial (M_{abc})_{2w,0} \! =  \mathbb{L} H_{2w,0}\!=  \mathbb{L} \mathbb{G}_{2a+2}\!\left(\!F^{(0)}_{2b+2,2c+2}\right)_{r-2a,s} \!\! +  \mathbb{L}f^{(0)}_{2a+2,2b+2} \mathcal{E}_{r-2a-2b,s} +\mathbb{L}h ,\\[1mm]
    &\partial (M_{abc})_{r,s} - (r+1)(M_{abc})_{r+1,s-1} =\mathbb{L}H_{r,s} \\ & \hspace{15mm}= \mathbb{L} \mathbb{G}_{2a+2}\left(F^{(0)}_{2b+2,2c+2}\right)_{r-2a,s}  + \mathbb{L}f^{(0)}_{2a+2,2b+2} \mathcal{E}_{r-2a-2b,s} , \hspace{14mm} \text{if } s \geq 1.
\end{align*}
Modifying $M_{2a+2,2b+2,2c+2}$ by a suitable multiple of $\mathcal{E}_{2w}$ allows us to assume that $h$ is a cusp form. We note that by doing so, we can no longer guarantee $k$ is a cusp form. This gives equations \eqref{3MI3THM1} and \eqref{3MI3THM2}.
 Now $\mathbb{G}_{2a+2}$ and $f^{(0)}_{2a+2,2b+2} \in M[\mathbb{L}]$ and each $\left(F^{(0)}_{2b+2,2c+2}\right)_{r-2a,s}$ and $\mathcal{E}_{r-2a-2b,s} \in \mathcal{MI}_2$, therefore
the RHS of equations \eqref{3MI3THM1} and \eqref{3MI3THM2} exist in $M[\mathbb{L}] \times \mathcal{MI}_2$.

A similar method can be used to obtain equations \eqref{3thmk11} and \eqref{3thmk12} and to show that the RHS of these equations exist in $\overline{M}[\mathbb{L}] \times \mathcal{MI}_2$.

Then, by Lemma \ref{thmFrs}, we conclude that the $(G_{2a+2,2b+2,2c+2})_{r,s} \in \mathcal{MI}_3$.
\end{proof}

As before, since there are no cusp forms of weight $\leq 10$, equation \eqref{3MI3THM1}, for $2a+2b+2c \leq 8$, does not contain any cusp forms (the cusp form $f^{(0)}_{2a+2,2b+2}$ vanishes since the above inequality implies that $2a+2b\leq8$). Furthermore, since there are also no cusp forms of weight $14$, we have no cusp forms appearing in equation \eqref{3MI3THM1} if $2a+2b+2c=12$ with $2a+2b \leq 8$. Finally, when $2a+2b+2c \leq 8$ or $2a+2b+2c=12$, we can write $\overline{k}$ as $C_{2a+2,2b+2,2c+2} \overline{\mathbb{G}}_{2a+2b+2c+2}$, for some constant $C_{2a+2,2b+2,2c+2}$. We also recall that, for $2b+2c\leq 8$, we have $\bar{g}^{(0)}_{2b+2,2c+2}=C^{(0)}_{2b+2,2c+2}\overline{\mathbb{G}}_{2b+2c+2}$.

Let $2a+2b+2c \leq 8$, or $2a+2b+2c=12$ with $2a+2b\leq8$ and $2b+2c\leq8$. Then, we define the function $(G_{2a+2,2b+2,2c+2})_{r,s}$ to be ``new'' if its associated $\partial, \bar{\partial}$ equations cannot be solved in terms of real analytic Eisenstein series when $C_{2a+2,2b+2,2c+2}$ and $C^{(0)}_{2b+2,2c+2}$ are set to zero.

\subsection{Length Three Examples}
\subsubsection{Example 1}
The first, and simplest, example we look at is when $2a=2b=2c=2$.
Using Theorem \ref{3MI3THM} and the fact, from \cite{brown1}, that $\overline{g}^{(0)}_{4,4}$ vanishes, we have:
\begingroup
\allowdisplaybreaks
\begin{align*}
&\partial (G_{4,4,4})_{6,0}=\mathbb{L}\mathbb{G}_4 (F^{(0)}_{4,4})_{4,0} \hspace{26mm}
&& \overline{\partial}(G_{4,4,4})_{0,6} = \mathbb{L}\overline{\mathbb{G}}_4 (F^{(0)}_{4,4})_{0,4} 
\\ &
&& \hspace{30mm} + C_{4,4,4}\LL\overline{\mathbb{G}}_8
\\[1mm]
&\partial (G_{4,4,4})_{5,1} - 6(G_{4,4,4})_{6,0} \quad
&& \overline{\partial}(G_{4,4,4})_{1,5} - 6(G_{4,4,4})_{0,6} 
\\
& \hspace{25mm}=\mathbb{L}\mathbb{G}_4 (F^{(0)}_{4,4})_{3,1}
&& \hspace{25mm} = \mathbb{L}\overline{\mathbb{G}}_4 (F^{(0)}_{4,4})_{1,3}
\\[1mm]
&\partial (G_{4,4,4})_{4,2} - 5(G_{4,4,4})_{5,1} 
&& \overline{\partial}(G_{4,4,4})_{2,4} - 5(G_{4,4,4})_{1,5} 
\\&\hspace{25mm} =\mathbb{L}\mathbb{G}_4 (F^{(0)}_{4,4})_{2,2}
&&\hspace{25mm} = \mathbb{L}\overline{\mathbb{G}}_4 (F^{(0)}_{4,4})_{2,2}
\\[1mm]
&\partial (G_{4,4,4})_{3,3} - 4(G_{4,4,4})_{4,2}
&& \overline{\partial}(G_{4,4,4})_{3,3} - 4(G_{4,4,4})_{2,4} 
\\ &\hspace{25mm} =\mathbb{L}\mathbb{G}_4 (F^{(0)}_{4,4})_{1,3}
&&\hspace{25mm} = \mathbb{L}\overline{\mathbb{G}}_4 (F^{(0)}_{4,4})_{3,1}
\\[1mm]
&\partial (G_{4,4,4})_{2,4} - 3(G_{4,4,4})_{3,3}
&& \overline{\partial}(G_{4,4,4})_{4,2} - 3(G_{4,4,4})_{3,3} 
\\
&\hspace{25mm} =\mathbb{L}\mathbb{G}_4 (F^{(0)}_{4,4})_{0,4}
&&\hspace{25mm} = \mathbb{L}\overline{\mathbb{G}}_4 (F^{(0)}_{4,4})_{4,0}
\\[1mm]
&\partial (G_{4,4,4})_{1,5} - 2(G_{4,4,4})_{2,4}=0 
&& \overline{\partial}(G_{4,4,4})_{5,1} - 2(G_{4,4,4})_{4,2} = 0
\\[1mm]
&\partial (G_{4,4,4})_{0,6} - (G_{4,4,4})_{1,5}=0
&& \overline{\partial}(G_{4,4,4})_{6,0} - (G_{4,4,4})_{5,1} = 0,
\end{align*} 
\endgroup
where $C_{4,4,4}$ is some constant.
Using the equations from Section \ref{ex1} gives:
\begingroup
\begin{align*}
&\partial (G_{4,4,4})_{6,0}=\frac{1}{2}\mathbb{L}\mathbb{G}_4\mathcal{E}_{2,0}^2 \hspace{26mm}
&& \overline{\partial} (G_{4,4,4})_{0,6} = \frac{1}{2}\mathbb{L}\overline{\mathbb{G}}_4\mathcal{E}_{0,2}^2
+ C_{4,4,4}\LL\overline{\mathbb{G}}_8
\\[1mm]
&\partial (G_{4,4,4})_{5,1} - 6(G_{4,4,4})_{6,0}
&& \overline{\partial} (G_{4,4,4})_{1,5} - 6(G_{4,4,4})_{0,6} 
\\ & \hspace{25mm}=\mathbb{L}\mathbb{G}_4\mathcal{E}_{2,0}\mathcal{E}_{1,1} 
&& \hspace{25mm} =  \mathbb{L}\overline{\mathbb{G}}_4\mathcal{E}_{0,2}\mathcal{E}_{1,1}
\\[1mm]
&\partial (G_{4,4,4})_{4,2} - 5(G_{4,4,4})_{5,1}
&& \overline{\partial}(G_{4,4,4})_{2,4} - 5(G_{4,4,4})_{1,5} 
\\ & \hspace{6mm} =\mathbb{L}\mathbb{G}_4 \mathcal{E}_{2,0}\mathcal{E}_{0,2} + \frac{1}{2}\mathbb{L}\mathbb{G}_4\mathcal{E}_{1,1}^2
&& \hspace{6mm} =\mathbb{L}\overline{\mathbb{G}}_4 \mathcal{E}_{0,2}\mathcal{E}_{2,0} + \frac{1}{2}\mathbb{L}\overline{\mathbb{G}}_4\mathcal{E}_{1,1}^2
\\[1mm]
&\partial (G_{4,4,4})_{3,3} - 4(G_{4,4,4})_{4,2}
&& \overline{\partial}(G_{4,4,4})_{3,3} - 4(G_{4,4,4})_{2,4} 
\\ & \hspace{25mm} =\mathbb{L}\mathbb{G}_4\mathcal{E}_{1,1}\mathcal{E}_{0,2}
&& \hspace{25mm} = \mathbb{L}\overline{\mathbb{G}}_4\mathcal{E}_{1,1}\mathcal{E}_{2,0}
\\[1mm]
& \partial (G_{4,4,4})_{2,4} - 3(G_{4,4,4})_{3,3}
&& \overline{\partial}(G_{4,4,4})_{4,2} - 3(G_{4,4,4})_{3,3} 
\\& \hspace{25mm} = \frac{1}{2}\mathbb{LG}_4 \mathcal{E}^2_{0,2}
&& \hspace{25mm} = \frac{1}{2}\mathbb{L}\overline{\mathbb{G}}_4 \mathcal{E}^2_{2,0}
\\[1mm]
& \partial (G_{4,4,4})_{1,5} - 2(G_{4,4,4})_{2,4}=0
&& \overline{\partial}(G_{4,4,4})_{5,1} - 2(G_{4,4,4})_{4,2} = 0 
\\[1mm]
& \partial (G_{4,4,4})_{0,6} - (G_{4,4,4})_{1,5}=0
&& \overline{\partial}(G_{4,4,4})_{6,0} - (G_{4,4,4})_{5,1} = 0.
\end{align*} 
\endgroup
If $C_{4,4,4}$ is equal to zero, then these equations can be solved in terms of real analytic Eisenstein series:
\begin{align*}
&(G_{4,4,4})_{6,0}= \frac{1}{6}\mathcal{E}_{2,0}\mathcal{E}_{2,0}\mathcal{E}_{2,0}
\\
&(G_{4,4,4})_{5,1}= \frac{1}{2}\mathcal{E}_{2,0}\mathcal{E}_{2,0}\mathcal{E}_{1,1}
\\
&(G_{4,4,4})_{4,2}= \frac{1}{2}\mathcal{E}_{2,0}\mathcal{E}_{2,0}\mathcal{E}_{0,2} +\frac{1}{2}\mathcal{E}_{2,0}\mathcal{E}_{1,1}\mathcal{E}_{1,1} \quad 
\\
&(G_{4,4,4})_{3,3}= \mathcal{E}_{2,0}\mathcal{E}_{0,2}\mathcal{E}_{1,1} + \frac{1}{6}\mathcal{E}_{1,1}\mathcal{E}_{1,1}\mathcal{E}_{1,1} \hspace{20mm}
&& (G_{4,4,4})_{1,5} = \frac{1}{2}\mathcal{E}_{1,1}\mathcal{E}_{0,2}\mathcal{E}_{0,2}
\\
& (G_{4,4,4})_{2,4}= \frac{1}{2}\mathcal{E}_{2,0}\mathcal{E}_{0,2}\mathcal{E}_{0,2} + \frac{1}{2}\mathcal{E}_{1,1}\mathcal{E}_{0,2}\mathcal{E}_{1,1}
&& (G_{4,4,4})_{0,6} = \frac{1}{6}\mathcal{E}_{0,2}\mathcal{E}_{0,2}\mathcal{E}_{0,2}.
\end{align*} 
Therefore, these are not ``new'' functions. As with the length two examples, each of the above functions has an associated Laplace-eigenvalue equation:
\begingroup
\allowdisplaybreaks
\begin{align*}
    &(\Delta+6)(G_{4,4,4})_{6,0}=- \mathbb{L}\mathbb{G}_4\mathcal{E}_{2,0}\mathcal{E}_{1,1}
    \\[1mm]
    &(\Delta+6)(G_{4,4,4})_{5,1}=-\mathbb{L}\mathbb{G}_4\mathcal{E}_{1,1}\mathcal{E}_{1,1} -2\mathbb{L}\mathbb{G}_4\mathcal{E}_{2,0}\mathcal{E}_{0,2}
    \\[1mm]
    &(\Delta+6)(G_{4,4,4})_{4,2}=- \mathbb{L}^2\mathbb{G}_4\overline{\mathbb{G}}_4\mathcal{E}_{2,0} -3 \mathbb{L}\mathbb{G}_4\mathcal{E}_{1,1}\mathcal{E}_{0,2}
    \\[1mm]
    &(\Delta+6)(G_{4,4,4})_{3,3}=- \mathbb{L}^2\mathbb{G}_4\overline{\mathbb{G}}_4\mathcal{E}_{1,1} -2 \mathbb{L}\mathbb{G}_4\mathcal{E}_{0,2}\mathcal{E}_{0,2} -2 \mathbb{L}\overline{\mathbb{G}}_4\mathcal{E}_{2,0}\mathcal{E}_{2,0}
    \\[1mm]    &(\Delta+6)(G_{4,4,4})_{2,4}=-\mathbb{L}^2\mathbb{G}_4\overline{\mathbb{G}}_4\mathcal{E}_{0,2} -3 \mathbb{L}\overline{\mathbb{G}}_4\mathcal{E}_{1,1}\mathcal{E}_{2,0}
    \\[1mm]
    &(\Delta+6)(G_{4,4,4})_{1,5}= - \mathbb{L}\overline{\mathbb{G}}_4\mathcal{E}_{1,1}\mathcal{E}_{1,1} -2\mathbb{L}\overline{\mathbb{G}}_4\mathcal{E}_{0,2}\mathcal{E}_{2,0}
    \\[1mm]
    &(\Delta+6)(G_{4,4,4})_{0,6}=-\mathbb{L}\overline{\mathbb{G}}_4\mathcal{E}_{0,2}\mathcal{E}_{1,1}.
\end{align*}
\endgroup
However, we are in search of Laplace-eigenvalue equations for ``new'' length three modular iterated integrals. We will get such equations in the next example.

\subsubsection{Example 2}\label{end}
We will now look at the case when $2a=4, 2b=2$ and $2c=2$. Theorem \ref{3MI3THM} gives:
\begingroup
\allowdisplaybreaks
\begin{align*}
&\partial (G_{6,4,4})_{8,0}=\mathbb{L}\mathbb{G}_6 (F^{(0)}_{4,4})_{4,0} = \frac{1}{2}\mathbb{L}\mathbb{G}_6\mathcal{E}_{2,0}\mathcal{E}_{2,0}
\\
&\partial (G_{6,4,4})_{7,1} - 8(G_{6,4,4})_{8,0}=\mathbb{L}\mathbb{G}_6 (F^{(0)}_{4,4})_{3,1} = \mathbb{L}\mathbb{G}_6\mathcal{E}_{2,0}\mathcal{E}_{1,1}
\\
&\partial (G_{6,4,4})_{6,2} - 7(G_{6,4,4})_{7,1}=\mathbb{L}\mathbb{G}_6 (F^{(0)}_{4,4})_{2,2} = \mathbb{L}\mathbb{G}_6 \mathcal{E}_{2,0}\mathcal{E}_{0,2} + \frac{1}{2}\mathbb{L}\mathbb{G}_6\mathcal{E}_{1,1}\mathcal{E}_{1,1}
\\
&\partial (G_{6,4,4})_{5,3} - 6(G_{6,4,4})_{6,2}=\mathbb{L}\mathbb{G}_6 (F^{(0)}_{4,4})_{1,3}=\mathbb{L}\mathbb{G}_6\mathcal{E}_{1,1}\mathcal{E}_{0,2}
\\
&\partial (G_{6,4,4})_{4,4} - 5(G_{6,4,4})_{5,3}=\mathbb{L}\mathbb{G}_6 (F^{(0)}_{4,4})_{0,4} = \frac{1}{2}\mathbb{LG}_6 \mathcal{E}_{0,2}\mathcal{E}_{0,2}
\\
& \partial (G_{6,4,4})_{3,5} - 4(G_{6,4,4})_{4,4}=0
\\[1mm]
& \partial (G_{6,4,4})_{2,6} - 3(G_{6,4,4})_{3,5}=0
\\[1mm]
& \partial (G_{6,4,4})_{1,7} - 2(G_{6,4,4})_{2,6}=0
\\[1mm]
& \partial (G_{6,4,4})_{0,8} - (G_{6,4,4})_{1,7}=0
\end{align*} 
and
\begin{align*}
& \overline{\partial} (G_{6,4,4})_{0,8} =
\mathbb{L}\overline{\mathbb{G}}_4(F^{(0)}_{6,4})_{0,6} + C_{6,4,4}\LL\overline{\mathbb{G}}_{10}
\\
& \overline{\partial} (G_{6,4,4})_{1,7} - 8(G_{6,4,4})_{0,8} = \mathbb{L}\overline{\mathbb{G}}_4(F^{(0)}_{6,4})_{1,5}
\\
& \overline{\partial}(G_{6,4,4})_{2,6} - 7(G_{6,4,4})_{1,7} = \mathbb{L}\overline{\mathbb{G}}_4(F^{(0)}_{6,4})_{2,4}
\\
& \overline{\partial}(G_{6,4,4})_{3,5} - 6(G_{6,4,4})_{2,6} = \mathbb{L}\overline{\mathbb{G}}_4(F^{(0)}_{6,4})_{3,3}
\\
& \overline{\partial}(G_{6,4,4})_{4,4} - 5(G_{6,4,4})_{3,5} = \mathbb{L}\overline{\mathbb{G}}_4(F^{(0)}_{6,4})_{4,2}
\\
& \overline{\partial}(G_{6,4,4})_{5,3} - 4(G_{6,4,4})_{4,4} = \mathbb{L}\overline{\mathbb{G}}_4(F^{(0)}_{6,4})_{5,1} \qquad
&& \overline{\partial}(G_{6,4,4})_{7,1} - 2(G_{6,4,4})_{6,2} = 0
\\
& \overline{\partial}(G_{6,4,4})_{6,2} - 3(G_{6,4,4})_{5,3} = \mathbb{L}\overline{\mathbb{G}}_4(F^{(0)}_{6,4})_{6,0}
&& \overline{\partial}(G_{6,4,4})_{8,0} - (G_{6,4,4})_{7,1} = 0,
\end{align*} 
\endgroup
where $C_{6,4,4}$ is some constant. The functions $(F^{(0)}_{6,4})_{r,s}$ cannot be expressed in terms of real analytic Eisenstein series, and therefore, we are not able to simplify the $\bar{\partial}$ equations like the $\partial$ equations. As before, each function has an associated Laplace-eigenvalue equation:
\begingroup
\allowdisplaybreaks
\begin{align*}
    &(\Delta+8)(G_{6,4,4})_{8,0}=- \mathbb{L}\mathbb{G}_6\mathcal{E}_{2,0}\mathcal{E}_{1,1}
    \\[0.5mm]
    &(\Delta+8)(G_{6,4,4})_{7,1}=-\mathbb{L}\mathbb{G}_6\mathcal{E}_{1,1}\mathcal{E}_{1,1} -2\mathbb{L}\mathbb{G}_6\mathcal{E}_{2,0}\mathcal{E}_{0,2}
    \\[0.5mm]
    &(\Delta+8)(G_{6,4,4})_{6,2}=-\mathbb{L}^2\mathbb{G}_6\overline{\mathbb{G}}_4\mathcal{E}_{2,0} -3\mathbb{L}\mathbb{G}_6\mathcal{E}_{1,1}\mathcal{E}_{0,2}
    \\[0.5mm]
    &(\Delta+8)(G_{6,4,4})_{5,3}= -\mathbb{L}^2\mathbb{G}_6\overline{\mathbb{G}}_4\mathcal{E}_{1,1}
    -6\mathbb{L}\overline{\mathbb{G}}_4(F^{(0)}_{6,4})_{6,0} - 2\mathbb{L}\mathbb{G}_6\mathcal{E}_{0,2}\mathcal{E}_{0,2}
    \\[0.5mm]    
    &(\Delta+8)(G_{6,4,4})_{4,4}=
    -\mathbb{L}^2\mathbb{G}_6\overline{\mathbb{G}}_4\mathcal{E}_{0,2}
    -5\mathbb{L}\overline{\mathbb{G}}_4(F^{(0)}_{6,4})_{5,1}
    \\[0.5mm]
    &(\Delta+8)(G_{6,4,4})_{3,5}=-4\mathbb{L}\overline{\mathbb{G}}_4(F^{(0)}_{6,4})_{4,2}
    \\[0.5mm]
    &(\Delta+8)(G_{6,4,4})_{2,6}=-3\mathbb{L}\overline{\mathbb{G}}_4(F^{(0)}_{6,4})_{3,3}
    \\[0.5mm]
    &(\Delta+8)(G_{6,4,4})_{1,7}=-2\mathbb{L}\overline{\mathbb{G}}_4(F^{(0)}_{6,4})_{2,4}
    \\[0.5mm]
    &(\Delta+8)(G_{6,4,4})_{0,8}=-\mathbb{L}\overline{\mathbb{G}}_4(F^{(0)}_{6,4})_{1,5}.
\end{align*}
\endgroup
However, even when $C_{6,4,4}$ is set to zero,
the $\partial, \bar{\partial}$ equations cannot be solved in terms of real analytic Eisenstein series. These functions are all ``new''. Therefore, we have found Laplace-eigenvalue equations for ``new'' length three modular iterated integrals, as desired. There are, of course, more values for $a$, $b$ and $c$ to try. In fact, we will get ``new'' functions whenever $a \neq b$ or $a \neq c$.

\subsubsection{Example 3}\label{endex3}

We finish by looking at a case where $\bar{g}^{(0)}_{2b+2,2c+2}$ is not known to vanish.
We let $2b=4$ and $2a=2c=2$, then Theorem \ref{3MI3THM} gives:
\begingroup
\allowdisplaybreaks
\begin{align*}
    &\partial (G_{4,6,4})_{8,0}= \mathbb{L}\mathbb{G}_4 (F^{(0)}_{6,4})_{6,0}
\\
&\partial (G_{4,6,4})_{7,1} - 8(G_{4,6,4})_{8,0}
 = \mathbb{L}\mathbb{G}_4 (F^{(0)}_{6,4})_{5,1} 
\\
&\partial (G_{4,6,4})_{6,2} - 7(G_{4,6,4})_{7,1}
= \mathbb{L}\mathbb{G}_4 (F^{(0)}_{6,4})_{4,2} 
\\
&\partial (G_{4,6,4})_{5,3} - 6(G_{4,6,4})_{6,2}
= \mathbb{L}\mathbb{G}_4 (F^{(0)}_{6,4})_{3,3} 
\\
&\partial (G_{4,6,4})_{4,4} - 5(G_{4,6,4})_{5,3}
=  \mathbb{L}\mathbb{G}_4 (F^{(0)}_{6,4})_{2,4}
\\
&\partial (G_{4,6,4})_{3,5} - 4(G_{4,6,4})_{4,4}
 = \mathbb{L}\mathbb{G}_4 (F^{(0)}_{6,4})_{1,5}
\\
&\partial (G_{4,6,4})_{2,6} - 3(G_{4,6,4})_{3,5}
=\mathbb{L}\mathbb{G}_4 (F^{(0)}_{6,4})_{0,6}
\\
&\partial (G_{4,6,4})_{1,7} - 2(G_{4,6,4})_{2,6}= 0
\\
&\partial (G_{4,6,4})_{0,8} - (G_{4,6,4})_{1,7}= 0
\end{align*}
\endgroup
and
\begin{align*}
& \overline{\partial}(G_{4,6,4})_{0,8} = \mathbb{L}\overline{\mathbb{G}}_4(F^{(0)}_{4,6})_{0,6} 
+ C^{(0)}_{6,4}\mathbb{L}\overline{\mathbb{G}}_8\mathcal{E}_{0,2} +  C_{4,6,4}\LL\overline{\mathbb{G}}_{10}
\\[0.5mm]
& \overline{\partial}(G_{4,6,4})_{1,7} - 8(G_{4,6,4})_{0,8} = \mathbb{L}\overline{\mathbb{G}}_4(F^{(0)}_{4,6})_{1,5} + C^{(0)}_{6,4}\mathbb{L}\overline{\mathbb{G}}_8\mathcal{E}_{1,1}
\\[0.5mm]
&\overline{\partial}(G_{4,6,4})_{2,6} - 7(G_{4,6,4})_{1,7}  = \mathbb{L}\overline{\mathbb{G}}_4(F^{(0)}_{4,6})_{2,4} + C^{(0)}_{6,4}\mathbb{L}\overline{\mathbb{G}}_8\mathcal{E}_{2,0}
\\[0.5mm]
& \overline{\partial}(G_{4,6,4})_{3,5} - 6(G_{4,6,4})_{2,6} = \mathbb{L}\overline{\mathbb{G}}_4(F^{(0)}_{4,6})_{3,3}
\\
&\overline{\partial}(G_{4,6,4})_{4,4} - 5(G_{4,6,4})_{3,5}  = \mathbb{L}\overline{\mathbb{G}}_4(F^{(0)}_{4,6})_{4,2}
\\
&\overline{\partial}(G_{4,6,4})_{5,3} - 4(G_{4,6,4})_{4,4} = \mathbb{L}\overline{\mathbb{G}}_4(F^{(0)}_{4,6})_{5,1}
\\
& \overline{\partial}(G_{4,6,4})_{6,2} - 3(G_{4,6,4})_{5,3}  = \mathbb{L}\overline{\mathbb{G}}_4(F^{(0)}_{4,6})_{6,0}
\\
& \overline{\partial}(G_{4,6,4})_{7,1} - 2(G_{4,6,4})_{6,2} = 0
\\
& \overline{\partial}(G_{4,6,4})_{8,0} - (G_{4,6,4})_{7,1} = 0,
\end{align*}
where $C^{(0)}_{6,4}$ and $C_{4,6,4}$ are some constants. 
Even when $C^{(0)}_{6,4}$ and $C_{4,6,4}$ are set to zero, these equations are not solvable in terms of real analytic Eisenstein series. They are ``new'' functions. The associated Laplace equations are as follows:
\begingroup
\allowdisplaybreaks
\begin{align*}
     &(\Delta+8)(G_{4,6,4})_{8,0}= -\LL\GG_4 (F^{(0)}_{6,4})_{5,1}
    \\
    &(\Delta+8)(G_{4,6,4})_{7,1}= -2\LL\GG_4 (F^{(0)}_{6,4})_{4,2}
    \\[0.5mm]
    &(\Delta+8)(G_{4,6,4})_{6,2}= -3\LL\GG_4 (F^{(0)}_{6,4})_{3,3} - \LL^2\mathbb{G}_4\overline{\mathbb{G}}_4\mathcal{E}_{4,0}
    \\[0.5mm]
    &(\Delta+8)(G_{4,6,4})_{5,3}= -4\LL\GG_4 (F^{(0)}_{6,4})_{2,4} - \LL^2\mathbb{G}_4\overline{\mathbb{G}}_4\mathcal{E}_{3,1} - 6\LL\overline{\mathbb{G}}_4(F^{(0)}_{4,6})_{6,0}
    \\[0.5mm]
    &(\Delta+8)(G_{4,6,4})_{4,4}= -5\LL\GG_4 (F^{(0)}_{6,4})_{1,5} - \LL^2\mathbb{G}_4\overline{\mathbb{G}}_4\mathcal{E}_{2,2} - 5\LL\overline{\mathbb{G}}_4(F^{(0)}_{4,6})_{5,1}
    \\[0.5mm]
    &(\Delta+8)(G_{4,6,4})_{3,5}= -6\LL\GG_4 (F^{(0)}_{6,4})_{0,6} - \LL^2\mathbb{G}_4\overline{\mathbb{G}}_4\mathcal{E}_{1,3} - 4\LL\overline{\mathbb{G}}_4(F^{(0)}_{4,6})_{4,2}
    \\[0.5mm]
    &(\Delta+8)(G_{4,6,4})_{2,6}= - \LL^2\mathbb{G}_4\overline{\mathbb{G}}_4\mathcal{E}_{0,4} - 3\LL\overline{\mathbb{G}}_4(F^{(0)}_{4,6})_{3,3} - C^{(0)}_{6,4}\LL^2 \overline{\mathbb{G}}_8\mathbb{G}_4
    \\[0.5mm]
    &(\Delta+8)(G_{4,6,4})_{1,7}= -2\LL\overline{\mathbb{G}}_4(F^{(0)}_{4,6})_{2,4} - 2C^{(0)}_{6,4}\LL \overline{\mathbb{G}}_8\mathcal{E}_{2,0}
    \\[0.5mm]
    &(\Delta+8)(G_{4,6,4})_{0,8}= -\LL\overline{\mathbb{G}}_4(F^{(0)}_{4,6})_{1,5} - C^{(0)}_{6,4}\LL \overline{\mathbb{G}}_8\mathcal{E}_{1,1}.
\end{align*}
\endgroup
We note that the summands involving the constant term $C^{(0)}_{6,4}$ are simply multiples of the Laplace equations given by \eqref{ex3a} -- \eqref{ex3c}. Therefore, the above Laplace equations can easily be edited to remove these terms (if this became necessary).

Of course, there is great appeal in being able to determine the value of the constants $C^{(k)}_{2b+2,2c+2}$ and $C_{2a+2,b+2,2c+2}$ for specific $a$, $b$, $c$ and $k$. It would be especially useful to be able to determine when these constants disappear.

As mentioned in Section \ref{research}, it would not be surprising if the length three Laplace equations were to be used in expressing modular graph functions of three-loops or higher, rather than the two-loop functions. This possibility is hinted at in the Laplace equation for the three-loop modular graph function $C_{1,1,2,2}$, appearing in equations (4.47) and (4.48) of \cite{ellipzeta} and in equation (B.77) of \cite{Cabcd}. This Laplace equation contains a term that is cubic in a real analytic Eisenstein series, as well as a term involving a two-loop modular graph function multiplied by an Eisenstein series:
\begin{align*}
    &(\Delta+12)\left(C_{1,1,2,2}-\dfrac{8}{45}\mathbb{L}^4\mathcal{E}_{1,1}\mathcal{E}_{3,3} - \dfrac{8}{9}\mathbb{L}^4\mathcal{E}_{2,2}\mathcal{E}_{2,2}\right) \\ & \quad
    = \dfrac{128}{3}\mathbb{L}^3\mathcal{E}_{1,1}\mathcal{E}_{1,1}\mathcal{E}_{1,1} +  \dfrac{64}{9}\mathbb{L}^4\mathcal{E}_{2,2}\mathcal{E}_{2,2} 
    + \dfrac{16}{15}\mathbb{L}^4\mathcal{E}_{1,1}\mathcal{E}_{3,3} - \dfrac{4}{4725}\mathbb{L}^5\mathcal{E}_{5,5} \\& \quad \quad+ 56C_{1,2,3} + 12C_{1,1,4} + \dfrac{22}{3}C_{2,2,2} - 16\mathbb{L}\mathcal{E}_{1,1}C_{1,1,2} + 4 P_{1,1;1,2;1} - 4P_{1,1;1,1;2},
\end{align*}
where $P_{1,1;1,2;1}$ and $P_{1,1;1,1;2}$ are three-loop modular graph functions.

This Laplace equation also demonstrates the appeal of extending Definition \ref{defabc} to $k>0$, to generate ``new'' functions $(G^{(k)}_{2a+2,2b+2,2c+2})_{r,s}$. For example, if the term $\mathbb{L}^3\mathcal{E}_{1,1}\mathcal{E}_{1,1}\mathcal{E}_{1,1}$ were to match up with any of the functions $(G^{(k)}_{2a+2,2b+2,2c+2})_{r,s}$, such a function would be of the form $(G^{(k)}_{4,4,4})_{r,s}$. However, we have seen that restricting to $k=0$ does not lead to any ``new'' functions for $2a=2b=2c=2$. Therefore, if $\mathbb{L}^3\mathcal{E}_{1,1}\mathcal{E}_{1,1}\mathcal{E}_{1,1}$ were to match up with any ``new'' functions, $k$ would have to be greater than $0$.

\bibliography{paperrefs} 
\bibliographystyle{ieeetr}

\end{document}